\newtheorem{theorem}{Theorem}[section]
\newtheorem{lemma}[theorem]{Lemma}
\newtheorem{prop}[theorem]{Proposition}
\newtheorem{cor}[theorem]{Corollary}
\newtheorem{conjecture}[theorem]{Conjecture}
\theoremstyle{definition}
\newtheorem{remark}[theorem]{Remark}
\numberwithin{equation}{section}
\DeclareMathOperator{\cone}{cone}
\DeclareMathOperator{\Gal}{Gal}
\DeclareMathOperator{\Pic}{Pic}
\DeclareMathOperator{\Eff}{Eff}
\DeclareMathOperator{\vol}{vol}
\DeclareMathOperator*{\Hom}{Hom}
\DeclareMathOperator*{\reg}{reg}
\DeclareMathOperator*{\sing}{sing}
\DeclareMathOperator*{\Spec}{Spec}
\DeclareMathOperator*{\Proj}{Proj}
\DeclareMathOperator*{\Cl}{Cl}
\newcommand{\err}{R}
\DeclareMathOperator{\rk}{rk}
\newcommand{\rleft}{\mathopen{}\mathclose\bgroup\left}
\newcommand{\rright}{\aftergroup\egroup\right}
\newcommand{\n}{n}
\newcommand{\Qd}{\mathds{Q}}
\newcommand{\Qbar}{\overline{\Qd}}
\newcommand{\Kc}{\Qbar}
\newcommand{\Vd}{\mathds{V}}
\newcommand{\Zd}{\mathds{Z}}
\newcommand{\Fd}{\mathds{F}}
\newcommand{\Rd}{\mathds{R}}
\newcommand{\Gd}{\mathds{G}}
\newcommand{\Pd}{\mathds{P}}
\newcommand{\Ad}{\mathds{A}}
\newcommand{\SL}{\mathrm{SL}}
\newcommand{\Z}{\Zd}
\newcommand{\Vm}{\mathcal{V}}
\newcommand{\Cm}{\mathcal{C}}
\newcommand{\Fm}{\mathcal{F}}
\newcommand{\Dm}{\mathcal{D}}
\newcommand{\Rm}{\mathcal{R}}
\newcommand{\Nm}{\mathcal{N}}
\newcommand{\Mm}{\mathcal{M}}
\newcommand{\Lm}{\mathcal{L}}
\newcommand{\Um}{\mathcal{U}}
\newcommand{\Om}{\mathcal{O}}
\newcommand{\Tm}{\mathcal{T}}
\newcommand{\Xf}{\mathfrak{X}}
\newcommand{\Yf}{\mathfrak{Y}}
\newcommand{\afr}{\mathfrak{a}}
\newcommand{\bfr}{\mathfrak{b}}
\newcommand{\cfr}{\mathfrak{c}}
\newcommand{\Ms}{\mathscr{M}}
\newcommand{\ie}{i.\,e.,~}
\newcommand{\eg}{e.\,g.,~}
\newcommand{\df}{\mathrm{d}}
\newcommand{\fibphi}{\phi^{-1}}
\newcommand{\fibphix}{\fibphi(x)}
\newcommand{\fibphib}{\phi'^{-1}}
\newcommand{\fibphibx}{\fibphib(x)}
\newcommand{\tX}{\widetilde{X}}
\newcommand{\hX}{\widehat{X}}
\newcommand{\tXf}{\widetilde{\Xf}}
\newcommand{\stX}{\smash{\tX}}
\newcommand{\shX}{\smash{\hX}}
\newcommand{\stXf}{\smash{\tXf}}
\newcommand{\tY}{\widetilde{Y}}
\newcommand{\tYf}{\widetilde{\Yf}}
\newcommand{\stY}{\smash{\tY}}
\newcommand{\stYf}{\smash{\tYf}}
\newcommand{\efr}{\mathfrak{e}}
\newcommand{\ma}{\alpha}
\newcommand{\mb}{\beta}
\newcommand{\mc}{\gamma}
\newcommand{\cond}{\mathscr{H}}
\newcommand{\ca}{\hat{a}}
\newcommand{\cc}{\hat{c}}
\newcommand{\cy}{\hat{y}}
\newcommand{\subalign}[1]{%
  \vcenter{%
    \Let@ \restore@math@cr \default@tag
    \baselineskip\fontdimen10 \scriptfont\tw@
    \advance\baselineskip\fontdimen12 \scriptfont\tw@
    \lineskip\thr@@\fontdimen8 \scriptfont\thr@@
    \lineskiplimit\lineskip
    \ialign{\hfil$\m@th\scriptstyle##$&$\m@th\scriptstyle{}##$\crcr
      #1\crcr
    }%
  }
}
   \def\MR#1{}
\begin{document}

\title{Manin's conjecture for certain spherical threefolds}

\author{Ulrich Derenthal}

\address
{Institut f\"ur Algebra, Zahlentheorie und Diskrete Mathematik, Leibniz
  Universit\"at Hannover, Welfengarten 1, 30167 Hannover, Germany}
\email{derenthal@math.uni-hannover.de}

\author{Giuliano Gagliardi}

\address
{Raymond and Beverly Sackler School of Mathematical Sciences,
  Tel Aviv University, 6997801 Tel Aviv, Israel}
\email{giulianog@mail.tau.ac.il}

\date{August 10, 2018}

\subjclass[2010]{Primary 11D45; Secondary 14M27, 14G05, 11G35}

\begin{abstract}
  We prove Manin's conjecture on the asymptotic behavior of the number of
  rational points of bounded anticanonical height for a spherical threefold with
  canonical singularities and two infinite families of spherical threefolds with
  log terminal singularities. Moreover, we show that one of these families does
  not satisfy a conjecture of Batyrev and Tschinkel on the leading constant in
  the asymptotic formula. Our proofs are based on the universal torsor method,
  using Brion's description of Cox rings of spherical varieties.
\end{abstract}

\maketitle

\microtypesetup{protrusion=false}
\tableofcontents
\microtypesetup{protrusion=true}

\section{Introduction}

\subsection{Spherical varieties and Manin's conjecture}

Manin's conjecture \cite{MR89m:11060, MR1032922, MR1340296, MR1679843,
  MR2019019, pla} makes a precise prediction for the asymptotic behavior of the
number of rational points of bounded anticanonical height on (almost) Fano
varieties over number fields whose set of rational points is Zariski dense.

For a smooth Fano variety over $\Qd$ with a Zariski dense set of rational
points, one may introduce an anticanonical height function
$H\colon X(\Qd) \to \Rd_{>0}$ and ask for the asymptotic behavior of the number
of rational points of bounded height, as the height bound tends to infinity. The
total number might be dominated by points on \emph{accumulating} subvarieties
(or, more generally, accumulating \emph{thin subsets}, see \cite[\S
8]{MR2019019}), and hence it is more interesting to restrict to their complement
$U$. By \cite[Conjecture~B']{MR1032922}, we are lead to the expectation that
\begin{equation*}
  N_{U,H}(B) \coloneqq \#\{x \in U(\Qd) \colon H(x) \le B\} \sim \cfr B (\log B)^{\rho-1}
\end{equation*}
as $B \to \infty$, where $\rho$ is the Picard number of $X$. A conjecture for
the leading constant $\cfr$ is given by Peyre in \cite{MR1340296}. If $X$ is a
singular Fano variety with a \emph{crepant resolution} $\pi \colon \tX \to X$
(i.e., a desingularization with $\pi^*(-K_X) = -K_{\tX}$), then
\cite[Conjecture~C']{MR1032922} and \cite[5.1]{MR2019019} tell us that such an
asymptotic formula should hold with $\rho$ and $\cfr$ computed on $\tX$. If $X$
has worse singularities, \cite[Conjecture~C']{MR1032922} and
\cite[3.6]{MR2019019} predict
\begin{equation*}
  N_{U,H}(B) \sim \cfr B^\afr(\log B)^{\bfr-1},
\end{equation*}
where we may have $\afr > 1$; Batyrev and Tschinkel \cite{MR1679843} give a
prediction for $\cfr$.

Manin's conjecture has been proved for some classes of varieties and several
individual examples. Most of the known cases are proved using either harmonic
analysis on adelic points or the universal torsor method combined with various
analytic techniques.

Many of them are \emph{spherical varieties}, i.e., normal $G$-varieties
containing a dense $B$-orbit, where $G$ is a connected reductive group and
$B \subseteq G$ is a Borel subgroup. Spherical varieties are a huge class of
varieties that admit a combinatorial description by spherical systems (Luna's
program \cite{lun01}) and colored fans (Luna--Vust theory \cite{lv83})
generalizing the combinatorial description of toric varieties.

In particular, harmonic analysis has been used to prove Manin's conjecture for
some classes of equivariant compactifications of algebraic groups, for example
flag varieties \cite{MR89m:11060}, toric varieties \cite{MR1620682},
horospherical varieties \cite{MR1723811}, and wonderful compactifications of
semi-simple groups \cite{MR2482443, MR2328719}. All these varieties are
spherical varieties; more precisely, flag varieties and toric varieties are
special cases of horospherical varieties (which are toric bundles over flag
varieties, at least after blow-ups); wonderful compactifications of semi-simple
groups are special cases of wonderful varieties. This approach has also been
applied to some non-spherical varieties, namely equivariant compactifications of
vector groups \cite{MR1906155} and Cayley's singular ruled cubic surface
\cite{MR3454090}.

The universal torsor method for Manin's conjecture was initiated by Salberger
\cite{MR1679841}, who gave a new proof of Manin's conjecture for split toric
varieties over $\Qd$, which are spherical. Moreover, estimating rational points
on a projective variety $X \subseteq \Pd^n$ by counting integral points on its
affine cone in $\Ad^{n+1}$, \eg by the circle method \cite{MR0150129}, can be
interpreted as an instance of the universal torsor method. However, all other
applications of the universal torsor method seem to concern non-spherical
varieties. In dimension $2$, there are many examples of smooth and singular del
Pezzo surfaces with a crepant resolution; see \cite{MR1909606, MR2874644,
  MR2332351, MR3100953}, for example. In higher dimension, only three cases are
known so far: Segre's singular cubic threefold \cite{MR2329549}, a singular
cubic fourfold \cite{MR3198752} and a singular biprojective cubic threefold
\cite{bbs16}; in all three cases, the singularities have a crepant resolution.
Hence all results proved by the universal torsor method are explained by Peyre's
relatively classical version of Manin's conjecture \cite[5.1]{MR2019019}.

The goal of our project is to start the investigation of Manin's conjecture for
spherical varieties by the universal torsor method. For this method, an explicit
description of the universal torsors is needed; this can be obtained from the
Cox rings of the underlying varieties (for details, see \cite{arXiv:1408.5358},
for example). Cox rings of spherical varieties were determined by Brion
\cite{bri07}. Also note that our results below are the first applications of the
universal torsor method to varieties without a crepant resolution, where the
more general conjectures of Batyrev and Tschinkel \cite{MR1679843} are relevant.

\subsection{A singular weighted cubic threefold and $(2 \times 2)$-determinants
  that are cubes}

One of the simplest spherical varieties that is neither horospherical
nor wonderful has the following nice and easy description: It is the
singular weighted cubic threefold
\begin{equation*}
  X_2 \coloneqq \Vd(ad-bc-z^3) \subseteq Y_2 \coloneqq \Pd_\Qd(1,2,1,2,1)
\end{equation*}
in the weighted projective space $Y_2$ with weighted homogeneous coordinates
$(a:b:c:d:z)$. It is closely related to the following Diophantine problem: How
often is the determinant of a $(2 \times 2)$-matrix a cube? The question of
representing a fixed number as a determinant over $\Zd$ is considered in
\cite{MR1230289}.

The action of the reductive group $\SL_2 \times \Gd_m$ defined by
\begin{equation*}
  \left(\begin{pmatrix}
  a' & b' \\
  c' & d'
\end{pmatrix},t\right)\cdot \left(\begin{pmatrix}
  a & b \\
  c & d
\end{pmatrix},z\right) \coloneqq \left(\begin{pmatrix}
  a' & b' \\
  c' & d'
\end{pmatrix}\cdot 
\begin{pmatrix}
  a & b \\
  c & d
\end{pmatrix}\cdot
\begin{pmatrix}
  t & 0 \\
  0 & t^{-1}
\end{pmatrix},z\right)
\end{equation*}
turns $X_2$ into a spherical variety. Its geometry can be analyzed by
the combinatorial theory of spherical varieties, which allows us to
determine its Picard number and its anticanonical divisor, for
example; we will do this in Section~\ref{sec:geometry}. For this
introduction, we emphasize a weighted-projective point of view; see
\cite{MR704986}.

Since $-K_{X_2} = \Om_{X_2}(4)$, we obtain an anticanonical height
\begin{equation*}
  H\colon X_2(\Qd) \to \Rd_{>0}
\end{equation*}
defined by
\begin{equation*}
  H(a:b:c:d:z) \coloneqq \frac{\max\{|a^4|,|b^2|,|c^4|,|d^2|,|z^4|\}}
  {\gcd(a^4,b^2,c^4,d^2,z^4)}
\end{equation*}
for $a,b,c,d,z \in \Zd$; note that in weighted projective space, we may not
assume that the coordinates are coprime. See also
Section~\ref{sec:parameterization}.

Blowing up its singular locus $\Vd(a,c,z) \cong \Pd^1_\Qd$ gives a crepant
resolution $\pi\colon \stX_2 \to X_2$ (in particular, $X_2$ has at worst
canonical singularities), with $\Pic(\stX_2)$ free of rank $2$. This means that
we are in the situation of Peyre's relatively classical version
\cite[5.1]{MR2019019} of Manin's conjecture. Our first main result (see
Theorem~\ref{thm:final_summations} for its proof) is compatible with this
prediction (see Section~\ref{sec:exp-x2}):

\begin{theorem}\label{thm:main-2_intro}
  We have
  \begin{equation*}
    N_{X_2,H}(B) = \cfr B\log B+O(B)\text{,}
  \end{equation*}
  where 
  \begin{equation*}
    \cfr = \frac{1}{8}\cdot\frac{1}{\zeta(2)\zeta(3)}\cdot
    \rleft(2 \iiiint_{|a|,|c|,|z|,|(ad-z^3)/c|,|d| \le 1} \frac{1}{|c|} \,\df a\,\df c\,\df d\,\df z\rright)
\end{equation*}
  is Peyre's constant.
\end{theorem}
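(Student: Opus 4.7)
The plan is to apply the universal torsor method on the crepant resolution $\pi\colon \stX_2 \to X_2$. The starting point is a bijective parametrization (up to a finite unit ambiguity) of $X_2(\Qd)$, away from the singular curve $\Vd(a,c,z) \cong \Pd^1_\Qd$ whose own contribution is at most $O(B)$, by integer tuples on the universal torsor of $\stX_2$. Such a description follows from Brion's explicit presentation of the Cox ring of a spherical variety: its generators correspond to the invariant prime divisors, the colors, and the exceptional divisor of $\pi$, producing coordinates $(a_1,b,c_1,d,z_1,\eta) \in \Zd^6$ with $a = a_1\eta$, $c = c_1\eta$, $z = z_1\eta$, a single torsor equation derived from $ad - bc - z^3 = 0$, and a finite list of coprimality conditions that replace the $\gcd$-normalization built into $H$.

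With this parametrization in hand, $N_{X_2,H}(B)$ becomes, up to an $O(B)$ loss from the singular locus, a count of torsor tuples whose absolute values satisfy the polytope condition inherited from $H$. Encoding the coprimality conditions by M\"obius inversion and using the linearity of the torsor equation in $b$ to count $b$ in a fixed residue class modulo the appropriate coefficient, I would sum successively over $z_1$, then $a_1$ and $d$, then $c_1$, and finally $\eta$, approximating each sum by its integral and tracking boundary and M\"obius errors at every stage. Since $\Pic(\stX_2)$ has rank $2$, exactly one direction remains free after the height bound is imposed, which produces the single power of $\log B$ in the main term. Solving the torsor equation as $b = (ad - z^3)/c$ introduces the Jacobian $1/|c|$ and the region $|a|,|b|,|c|,|d|,|z| \le 1$, which gives rise to the quadruple integral in the statement, while the non-archimedean local densities assemble into the Euler product $1/(\zeta(2)\zeta(3))$, and the rational prefactors $\tfrac{1}{8}$ and $2$ come from the unit-group and weighted-projective normalizations of the height function.

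The main technical obstacle is the error analysis: at every stage the M\"obius sums must be estimated uniformly in their moduli, and the passage from sums to integrals must only introduce errors absorbable into the overall $O(B)$. This forces a careful choice of the summation order and of the truncation points for the M\"obius functions, and in particular excludes the naive bounds that would produce an extra factor of $\log B$ in the error. Once the asymptotic of shape $\cfr B \log B$ is established, it remains to identify $\cfr$ with Peyre's constant \cite{MR1340296} for $\stX_2$; this is a direct comparison of two explicit expressions, which is deferred to the later section on the expected constant.
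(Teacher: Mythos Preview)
Your overall plan---universal torsor on a resolution of $X_2$, M\"obius inversion, and successive sum-to-integral replacement---is indeed the paper's strategy, but two concrete points are off. First, your torsor description is not correct: the Cox ring of $\tX_2$ is $\Qd[a,b,c,d,z,w]/(ad-bc-z^3w)$, and integral torsor points carry the conditions $\gcd(a,c,z)=\gcd(b,d,w)=1$ (Proposition~\ref{prop:41}). The exceptional generator $w$ appears in the \emph{equation}; it is not extracted as a common factor $\eta$ of $a,c,z$ as your relations $a=a_1\eta$, $c=c_1\eta$, $z=z_1\eta$ suggest.

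Second, and this is the substantive gap, the paper does \emph{not} count on the torsor of $\tX_2$. With only $\gcd(a,c,z)=1$ one may still have $g\coloneqq\gcd(a,c)>1$, and then for fixed $(a,c,z,w)$ the number of $(b,d)\in\Zd^2$ with $ad-bc=z^3w$ in a box is not simply ``segment length $+\,O(1)$'': one first needs $g\mid w$, and both the main term and the $O(1)$ acquire a dependence on $g$. The paper therefore passes to a \emph{further} blow-up $\hX_2\to\tX_2\to X_2$ (Corollary~\ref{cor:countingproblem} and Remark~\ref{rem:blt}), introducing a seventh variable $t$ and replacing $\gcd(a,c,z)=1$ by the pair $\gcd(a,c)=\gcd(z,t)=1$. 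With $\gcd(a,c)=1$ the $(b,d)$-count is $V_1+O(1)$ uniformly; the auxiliary $t$ is then absorbed by the $2$-to-$1$ substitution $(ta,tc)\mapsto(a,c)$ at the end of Proposition~\ref{prop:mi1}, after which (Theorem~\ref{thm:final_summations}) a second M\"obius inversion on $\gcd(a,c,z)=1$ and three further sum-to-integral steps over $z,a,c$ produce the main term. Your scheme of eliminating $b$ by linearity and then summing over $z_1,a_1,d,c_1,\eta$ does not address the $\gcd(a,c)$ obstruction, and without it---or an equivalent extraction of $\gcd(a,c)$ carried out by hand---the error bounds will not close to $O(B)$.
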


\subsection{A family of spherical threefolds}

Our weighted cubic threefold $X_2 \subseteq \Pd_\Qd(1,2,1,2,1)$ can be
generalized as follows. For any positive integer $\n$, consider the weighted
hypersurface
\begin{equation*}
  X_\n \coloneqq \Vd(ad-bc-z^{\n+1}) \subseteq Y_\n \coloneqq \Pd_\Qd(1,\n,1,\n,1)
\end{equation*}
of degree $\n+1$ in the weighted projective space $Y_\n$ with weighted
homogeneous coordinates $(a:b:c:d:z)$. With an action of $\SL_2 \times \Gd_m$
that has the same description as above for $X_2$, each $X_\n$ is a spherical
threefold that is neither horospherical (see the beginning of
Section~\ref{sec:geometry}) nor wonderful (because it is not smooth).

Let $\n \ge 3$. By choosing sections of the very ample $\frac{\n}{\n+2}$-th
power of the $\Qd$-Cartier divisor $-K_{X_\n} = \Om_{X_\n}(\n+2)$, we obtain an
anticanonical height
\begin{equation*}
  H\colon X_\n(\Qd) \to \Rd_{>0}
\end{equation*}
defined by
\begin{equation*}
  H(a:b:c:d:z) = \left(\frac{\max\{|a^\n|,|b|,|c^\n|,|d|,|z^\n|\}}
    {\gcd(a^\n,b,c^\n,d,z^\n)}\right)^{\frac{\n+2}{\n}}
\end{equation*}
for $a,b,c,d,z \in \Zd$; see also Section~\ref{sec:parameterization}.

Naive heuristic considerations ignoring the denominator of the height function
(analogous to the ones in \cite[Heuristic principle]{MR2290498} and \cite[\S
5.1]{MR1679843}) lead to the expectation that $N_{X_\n,H}(B)$ might grow
linearly. However, in our second main result, we show (see
Theorem~\ref{thm:ge3-final} for its proof):

\begin{theorem}\label{thm:ge3_intro}
  Let $\n\ge 3$. We have
  \begin{equation*}
    N_{X_{\n,\reg},H}(B) = 
    \rleft(\sum_{x \in \Pd^2(\Qd) \setminus \Vd(a,c)} \cfr_x \rright) B^{\frac{2\n}{\n+2}} + O(B)\text{,}
  \end{equation*}
  where $X_{\n,\reg}$ denotes the smooth locus of $X_\n$. The values in the leading constant are
  \begin{equation*}
    \cfr_x = \frac{1}{2}\cdot\frac{1}{\zeta(2)}\cdot\omega_{\infty,x},
  \end{equation*}
  where (assuming that $a,c,z$ are coprime integral coordinates for $x$)
  \begin{align*}
    \omega_{\infty,x} = 
    \begin{dcases}
      \iint_{\substack{\subalign{
            |a^{\n}w|,|c^{\n}w|,|z^{\n}w| &\le 1\\
            |b|,|(bc+z^{\n+1}w)/a| &\le 1
          }}}  \frac{1}{|a|} \,\df b\,\df w & \text{for $a \ne 0$,}\\
      \iint_{\substack{\subalign{
            |a^{\n}w|,|c^{\n}w|,|z^{\n}w| &\le 1\\
            |(ad-z^{\n+1}w)/c|,|d| &\le 1
          }}}  \frac{1}{|c|} \,\df d\,\df w &\text{for $c \ne 0$.}
    \end{dcases}
  \end{align*}
\end{theorem}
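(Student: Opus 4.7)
The plan is to decompose the count along fibers of the projection $\pi\colon X_\n\to\Pd^2_\Qd$, $(a:b:c:d:z)\mapsto(a:c:z)$, writing
\[
N_{X_{\n,\reg},H}(B)=\sum_{x\in\Pd^2(\Qd)\setminus\Vd(a,c)}N_x(B),
\]
where $N_x(B)$ counts points of $\pi^{-1}(x)\cap X_{\n,\reg}$ of height at most $B$. For each $x$ I would fix coprime integer coordinates $(a,c,z)$ and treat $a\ne 0$ (the case $a=0$, so $c\ne 0$, being symmetric). Each rational point in $\pi^{-1}(x)$ has a unique (up to sign) primitive integer representative $(ka,B,kc,D,kz)$ with $k\in\Zd_{>0}$ and $(B,D)\in\Zd^2$ satisfying $aD-cB=k^\n z^{\n+1}$ and the primitivity condition (for every prime $p\mid k$, $p^\n\nmid B$ or $p^\n\nmid D$) arising from the weighted projective identification. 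Setting $T=B^{\n/(\n+2)}$ and $M_0=\max(|a|,|c|,|z|)^\n$, the height bound becomes $\max(k^\n M_0,|B|,|D|)\le T\gcd(k^\n,B,D)$.

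The key substitution is $g=\gcd(k^\n,B,D)$, $m=k^\n/g$, $b=B/g$, $d=D/g$, which yields $\gcd(m,b,d)=1$, transforms the equation into $ad-cb=m z^{\n+1}$, and simplifies the height bound to $\max(m M_0,|b|,|d|)\le T$. A prime-by-prime argument using $gm=k^\n$ and the primitivity condition shows that $v_p(g)\in\{0,\ldots,\n-1\}$ is uniquely determined modulo $\n$ by $v_p(m)$, so $k=\prod_{p\mid m}p^{\lceil v_p(m)/\n\rceil}$ is uniquely determined by $m$; hence the parameterization of primitive integer representatives by triples $(m,b,d)$ is bijective. Writing $d=(cb+m z^{\n+1})/a$ via the equation reduces this to a two-variable lattice point problem in $(m,b)\in\Zd_{>0}\times\Zd$ with a linear congruence condition modulo $a$.

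The asymptotic evaluation approximates this lattice count by a real integral. The rescaling $b\mapsto Tb$, $m\mapsto Tw$ turns the bounds into the fixed real region defining $\omega_{\infty,x}$ and contributes a Jacobian factor of $T^2$; the linear congruence $a\mid cb+m z^{\n+1}$ has density $1/|a|$ in $(b,m)\in\Zd^2$, giving the $1/|a|$ prefactor. The coprimality $\gcd(m,b,d)=1$, handled via Möbius inversion, produces the Euler factor $1/\zeta(2)$, arising because the equation together with $\gcd(a,c,z)=1$ forces $p\mid d$ whenever $p\mid m$, $p\mid b$, and $p\nmid a$, so the coprimality reduces generically to $\gcd(m,b)=1$. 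The remaining $1/2$ accounts for the two representatives $(a,c,z)$ and $(-a,-c,-z)$ of each $x$. This yields $N_x(B)=c_x T^2+O(T^{2-\delta})$ for some $\delta>0$ uniform in $x$, with $c_x$ as in the statement.

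Summing over $x\in\Pd^2(\Qd)\setminus\Vd(a,c)$: for $\n\ge 3$ the constraint $|w|\le 1/M_0$ forces $\omega_{\infty,x}\ll\max(|a|,|c|,|z|)^{-(\n+1)}$; against the $O(M^2)$ primitive integer triples $(a,c,z)$ of magnitude $\sim M$, this gives a convergent series $\sum_x c_x$ (comparable to $\sum_M M^{-(\n-1)}$). The main obstacle is establishing error bounds in the fiberwise count that are uniform in $x$, so that the combined error from all $x$ is $O(B)=O(T^{(\n+2)/\n})$, strictly smaller than the main term $T^2=B^{2\n/(\n+2)}$ for $\n\ge 3$. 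The subtle interplay between the $\n$-th power structure $gm=k^\n$, the primitivity condition, and the coprimality $\gcd(m,b,d)=1$---in particular at primes dividing $a$, where the coprimality does not straightforwardly reduce to a two-variable condition---is the principal technical difficulty; I expect Brion's Cox-ring description of $X_\n$ (referenced in the introduction) to provide the cleanest organizing framework.
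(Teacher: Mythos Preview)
Your approach is correct and essentially parallel to the paper's, though organized differently. Your substitution $g=\gcd(k^n,B,D)$, $m=k^n/g$ rediscovers by hand the universal-torsor parameterization of the resolution $\widetilde{X}_n$: your triple $(m,b,d)$ with $\gcd(m,b,d)=1$ and $ad-cb=mz^{n+1}$ is exactly the paper's $(w,b,d)$ with $\gcd(b,d,w)=1$ from Proposition~4.1/Corollary~4.3, and your height inequality $\max(mM_0,|b|,|d|)\le T$ matches $\max|\mathscr{M}_n|\le B$. From that point on both arguments proceed by M\"obius inversion on the coprimality, replacing the $(b,d)$-count by $V_1$, and then the sum over $m$ (resp.\ $w$) by an integral, arriving at the same $V_2'(a,c,z)=\omega_{\infty,x}$.

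The one genuine difference is how the difficulty you flag---primes dividing $\gcd(a,c)$, where $\gcd(m,b,d)=1$ does not reduce to $\gcd(m,b)=1$---is handled. The paper does \emph{not} work on $\widetilde{X}_n$ but on a further blow-up $\widehat{X}_n$ (Remark~4.5), introducing an auxiliary variable $t$ with $\gcd(a,c)=\gcd(z,t)=1$ in place of your $\gcd(a,c,z)=1$ (Corollary~4.4). With $\gcd(a,c)=1$ the lattice count for $(b,d)$ is exactly $V_1+O(1)$ with no case analysis, and the extra variable $t$ is later absorbed by the substitution $(ta,tc)\mapsto(a,c)$, restoring $\gcd(a,c,z)=1$ in the final sum (end of the proof of Proposition~8.1). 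Your route is viable too: writing $e=\gcd(a,c)$, one has $\gcd(e,z)=1$, the equation forces $e\mid m$, and after dividing through one is back to the coprime situation; the per-fiber error terms $O(T/M_0)$ and $O(T/\max(|a|,|c|))$ then sum over $\max(|a|,|c|,|z|)\le T^{1/n}$ to give $O(T^{(n+2)/n})=O(B)$ exactly as required. So the paper's $t$-variable is a clean device rather than a necessity. One small point: your factor $\tfrac12$ actually arises from $m>0$ versus $w\in\mathds{R}$ in the integral (via the evenness of $V_1$ in $w$), not from the two sign choices of $(a,c,z)$; once you fix $(a,c,z)$ and $k>0$, each point has a unique representative.
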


We will see that $X_{n,\reg}$ is covered by rational curves, each of which
contains $\sim \cfr_x B^{2n/(n+2)}$ rational points of height at most $B$.
Therefore, we cannot obtain linear growth by removing a closed or thin subset.

Instead, we discuss in the next part of this introduction how our result is
explained by the predictions of Batyrev--Tschinkel \cite{MR1679843}; see
Section~\ref{sec:exp-ge-3} for more details. Note that the singular locus
$X_{\n,\sing}$ is a weakly accumulating subvariety, with
$N_{X_{\n,\sing},H}(B) \sim \frac{2}{\zeta(2)}B^{2\n/(\n+2)}$ (see
Remark~\ref{rem:singular_locus_counting}); we exclude it in
Theorem~\ref{thm:ge3_intro} to obtain a result that is compatible with
\cite{MR1679843}.

\subsection{The predictions of Batyrev--Tschinkel}

Let $X$ be a Fano variety over $\Qd$ with at worst log terminal singularities
and a Zariski dense set of rational points. Let $H\colon X(\Qd) \to \Rd_{>0}$ be
an anticanonical height function. Let $\pi\colon \stX \to X$ be a
desingularization and $L \coloneqq \pi^*(-K_X)$. By
\cite[Conjecture~C']{MR1032922} and \cite[3.6]{MR2019019}, we expect
\begin{equation*}
  N_{U,H}(B) \coloneqq \#\{x \in U(\Qd) : H(x) \le B\} \sim \cfr B^\afr (\log B)^{\bfr-1}
\end{equation*}
as $B \to \infty$, where $U$ is the complement of the closed (or thin) subset
consisting of the accumulating subvarieties,
$\afr\coloneqq \inf\{t\in \Rd : t \cdot L + K_{\tX}\text{ is effective}\}$ and
$\bfr$ is the codimension of the minimal face of the effective cone of $\stX$
containing $\smash{\afr \cdot L + K_{\tX}}$. Note that the effective cone of a
Fano variety with log terminal singularities is simplicial by
\cite[Corollary~1.3.2]{MR2601039}. If $X$ has at worst canonical singularities,
then $\smash{L + K_{\tX}}$ is effective, hence $\afr \le 1$. On the other hand,
for varieties with worse singularities, we may have $\afr > 1$, in which case
more than linear growth is expected.

A prediction for the leading constant $\cfr$ is given in \cite{MR1679843}. Here,
one considers the $\Lm$-primitive fibration (see
\cite[Definition~2.4.2]{MR1679843})
\begin{equation*}
  \phi\colon X \dashrightarrow P \coloneqq \Proj\rleft( \bigoplus_{\nu \ge 0}
  \Gamma\rleft(\stX, \rleft(\afr \cdot L + K_{\tX} \rright)^{\otimes \nu}\rright) \rright)\text{,}
\end{equation*}
and, for some restriction to open subsets $\phi\colon U \to V$, the constant
$\cfr$ is given by
\begin{equation*}
  \sum_{x \in V} \cfr_x,
\end{equation*}
where $\cfr_x$ is the expected constant in the asymptotic formula for the fiber
$\phi^{-1}(x)$. The sum should be taken over the fibers that contain a positive
proportion of the rational points (these are called $\Lm$-targets, see
\cite[Definition~3.2.4]{MR1679843}). If the divisor
$\smash{\afr \cdot L + K_{\tX}}$ is \emph{rigid}
(\cite[Definition~2.3.1]{MR1679843}, \eg if $X$ has a crepant resolution), then
the variety $P$ is a point.

Batyrev and Tschinkel make the following prediction in
\cite[Conjecture~3.5.1]{MR1679843}:

\begin{conjecture}\label{conj:BT}
  Let $\overline{H}$ be a height on $P$ relative to the line bundle
  $\Om_P(-1) \otimes \omega_P$. Then there exist positive constants
  $c_1,c_2$ and an open subset $V \subseteq P$ such that for every $x
  \in V$ we have
  \begin{equation*}
    c_1 \overline{H}(x) \le \cfr_x \le c_2 \overline{H}(x).
  \end{equation*}
\end{conjecture}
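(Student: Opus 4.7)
The plan is to attack Conjecture \ref{conj:BT} fiber by fiber, using Peyre's refined form of Manin's conjecture on the fibers $F_x = \phi^{-1}(x)$ of the $\Lm$-primitive fibration. First I would restrict to an open $V \subseteq P$ on which $\phi$ is flat with geometrically integral generic fibers, chosen so that the height induced on $F_x$ from $X$ is anticanonical on a suitable resolution of $F_x$, up to a twist determined by $L = \pi^*(-K_X)$. For each $x \in V$, Peyre's formula then gives a factorization
\begin{equation*}
  \cfr_x = \alpha(F_x)\,\beta(F_x)\,\omega_{\infty,x}\,\prod_p \omega_{p,x},
\end{equation*}
and the proof reduces to controlling each factor as $x$ varies in $V$ and comparing the result to $\overline{H}(x)$.

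Next I would handle the arithmetic factors. For $p$ outside a finite set $S$ of bad primes the fibers have good reduction, so by the Weil--Deligne bounds $\omega_{p,x} = 1 + O(p^{-1/2})$ uniformly in $x$; together with the local factors at $p \in S$, they combine into a convergent Euler product whose value lies between two positive constants on a dense open subset of $V$. The invariants $\alpha(F_x)$ and $\beta(F_x)$ are locally constant on a finite stratification of $V$ (the effective cone and the relevant face depend only on the Galois action on the Picard group of the generic fiber), so their contribution can also be absorbed into the constants $c_1$ and $c_2$.

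The core of the argument is the archimedean density $\omega_{\infty,x}$. Using the Cox ring of (a $\Qd$-factorialization of) $X$, available through Brion's description of Cox rings of spherical varieties, I would express $\omega_{\infty,x}$ explicitly as an integral over a real cell parametrizing bounded-height points of $F_x(\Rd)$. The content of Conjecture \ref{conj:BT} is that the scaling of this integral in $x$ matches a continuous metric on $\Om_P(-1) \otimes \omega_P$; geometrically this amounts to identifying $\omega_{\infty,x}$ with a fiber integral of a top form on $X$ divided by a top form on $P$ and recognizing the resulting ratio as a smooth section of that line bundle. The main obstacle will be making this identification rigorous: the height on $F_x$ is typically not strictly anticanonical, and the discrepancy between $L$ and $-K_{\stX}$ in the relative setting can introduce auxiliary scalings in $x$ whose behavior relative to $\overline{H}(x)$ is subtle. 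If such scalings contribute unbounded polynomial or logarithmic factors in the coordinates of $x$, the two-sided comparison breaks down; verifying or refuting it for concrete families of spherical threefolds (such as $X_\n$ with $\n \ge 3$) requires the explicit integral computations carried out later in the paper.
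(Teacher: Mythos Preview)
The statement you are trying to prove is a \emph{conjecture}, and the paper in fact shows that it is \emph{false} in general: Theorem~\ref{thm:height_constant_bar} disproves it for the family $X'_\n$. So any general argument along the lines you sketch must break down somewhere, and it is worth pinpointing where.

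The gap is in your treatment of the non-archimedean densities. You write that for $p$ outside a finite set $S$ of bad primes the fibers have good reduction and $\omega_{p,x} = 1 + O(p^{-1/2})$ uniformly in $x$, so that the Euler product lies between two positive constants on a dense open $V$. But the set of primes of bad reduction of the fiber $F_x$ depends on $x$: a prime $p$ dividing the coordinates of $x$ is bad for $F_x$ however large $p$ is. Concretely, for the family $X'_\n$ the paper computes (Lemma~\ref{lemma:pt_k_prime})
\[
\omega_{p,x} \;=\; \bigl(\text{bounded factor}\bigr)\cdot (p^{\efr+1})^{\min(\nu_p(\ca),\nu_p(\cc))},
\]
so the full Euler product carries a factor $\gcd(\ca,\cc)^{\efr+1}$, which is unbounded on every nonempty open subset of $\Pd^2$. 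This is exactly the mechanism behind the counterexample in Theorem~\ref{thm:height_constant_bar}: the points $(\ca_0 p^m:\cc_0:\cy_0)$ and $(\ca_0 p^m:\cc_0 p^m:\cy_0)$ have the same height but their constants $\cfr_x$ differ by a factor $\asymp (p^m)^{\efr+1}$.

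Your archimedean analysis, by contrast, is on the right track. For the family $X_\n$ with $\n\ge 3$ the $p$-adic densities happen to be genuinely independent of $x$ (Lemma~\ref{lemma:pt_k}), and the paper's proof of the conjecture in that case (Theorem~\ref{thm:height_constant}) is precisely a direct two-sided estimate of the real integral $\omega_{\infty,x}$ against $\max\{|a|,|c|,|z|\}^{-\n-1}$, carried out by elementary case analysis rather than via a general fiber-integration formalism. So the lesson is that the archimedean part can be handled as you suggest, but the assumption that the finite-place contribution is harmless is not a technicality to be absorbed into constants; it is the whole question.
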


We apply the conjectures of \cite{MR1679843} to our family $X_\n$ of spherical
varieties; see Section~\ref{sec:geometry} for their geometry. Blowing up the
singular locus $\Vd(a,c,z) \cong \Pd^1_\Qd$ gives a desingularization
$\pi\colon \stX_\n \to X_\n$, and we will see that we have
\begin{align*}
  \afr=\frac{2\n}{\n+2} &&\text{and}&&
  \bfr=
  \begin{cases}
     2, & \text{for $\n=2$,}\\
     1, & \text{for $\n\ge 3$.}
  \end{cases}
\end{align*}

For $\n \ge 3$, the singularities of $X_\n$ are not canonical, but log terminal.
The divisor $\smash{\afr\cdot \pi^*(-K_{X_\n})+K_{\tX_\n}}$ is not rigid, and
the $\Lm$-primitive fibration turns out to be a map
$\phi\colon X_\n \dashrightarrow P_\n \cong \Pd^2_\Qd$ with
$\smash{\Om_{P_\n}(1) \cong \Om_{\Pd^2_\Qd}(\n-2)}$ such that the constants
$\cfr_x$ appearing in Theorem~\ref{thm:ge3_intro} are Peyre's constant for the
fibers $\fibphix$.

In the proofs of Theorems~\ref{thm:main-2_intro} and \ref{thm:ge3_intro}, we
work with universal torsors over a further blow-up
$\shX_\n \to \stX_\n \to X_\n$ because this leads to more convenient coprimality
conditions in the associated counting problem (see Remark~\ref{rem:blt}). This
seems surprising to us because proofs of cases of Manin's conjecture for
singular del Pezzo surfaces usually use universal torsors of their minimal
desingularizations.

It turns out that Conjecture~\ref{conj:BT} of Batyrev--Tschinkel is true for
$X_\n$ (see Theorem~\ref{thm:height_constant}):

\begin{theorem}\label{thm:height_constant_intro}
  Let $\overline{H}\colon \Pd^2(\Qd) \to \Rd_{>0}$ be a height relative to
  \begin{align*}
    \Om_{\Pd^2_{\Qd}}(-\n-1) \cong \Om_{P_\n}(-1) \otimes \omega_{P_\n}\text{.}
  \end{align*}
  There exist positive constants $c_1, c_2$ such that for every
  $x \in \Pd^2(\Qd) \setminus \Vd(a,c)$ we have
  \begin{align*}
    c_1 \overline{H}(x) \le \cfr_x \le c_2 \overline{H}(x)\text{.}
  \end{align*}
\end{theorem}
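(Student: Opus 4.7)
The plan is to estimate $\omega_{\infty,x}$ directly from the integral formula in Theorem~\ref{thm:ge3_intro}. Fix coprime integer coordinates $(a,c,z)$ of $x\in\Pd^2(\Qd)\setminus\Vd(a,c)$ and set $M\coloneqq\max(|a|,|c|,|z|)$, so that $\overline{H}(a:c:z)=M^{-(\n+1)}$; the theorem reduces to the assertion $\omega_{\infty,x}\asymp M^{-(\n+1)}$ with implicit constants depending only on~$\n$. The two integral formulas agree on the common domain $ac\ne 0$ via the substitution $d=(bc+z^{\n+1}w)/a$, so I may work with whichever of $|a|,|c|$ is nonzero and at least as large as the other; by this symmetry it suffices to treat the $a\ne 0$ formula under the assumption $|a|\ge|c|$, and to split into the subcases $|a|=M$ and $|z|=M$.

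For the upper bound, the constraints $|a^{\n}w|,|c^{\n}w|,|z^{\n}w|\le 1$ give $|w|\le M^{-\n}$, and the linear constraint $|bc+z^{\n+1}w|\le|a|$ combined with $|b|\le 1$ restricts $b$ to an interval of length at most $2$ (since $|a|/|c|\ge 1$). When $|a|=M$, multiplying these by the integrand $1/|a|=1/M$ immediately yields $\omega_{\infty,x}\le 4/M^{\n+1}$. When $|z|=M$, these bounds together give only $4/(|a|M^{\n})$, which is too weak; the remedy is to exploit the cancellation in $|bc+z^{\n+1}w|\le|a|$, which forces $|z^{\n+1}w|\le|a|+|bc|\le 2|a|$ and hence $|w|\le 2|a|/|z|^{\n+1}$. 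Inserting this improved bound yields $\omega_{\infty,x}\ll M^{-(\n+1)}$ in this case as well.

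For the lower bound, I exhibit explicit product subregions contained in the integration domain. When $|a|=M$, the box $\{|b|\le 1/2,\ |w|\le(2M^{\n})^{-1}\}$ satisfies all five constraints---the nontrivial one being $|bc+z^{\n+1}w|\le|c|/2+|z|^{\n+1}/(2M^{\n})\le M/2+M/2=|a|$---and contributes $\asymp M^{-(\n+1)}$ after the factor $1/|a|$. When $|z|=M$, the box $\{|b|\le 1/2,\ |w|\le|a|/(2|z|^{\n+1})\}$ satisfies all five constraints directly, using $|c|\le|a|\le|z|=M$, and contributes $|a|^{-1}\cdot|a|\cdot|z|^{-(\n+1)}=M^{-(\n+1)}$. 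The main obstacle throughout is the subcase $|z|=M$, where both bounds rely on extracting the cancellation in the linear constraint $|bc+z^{\n+1}w|\le|a|$; without this the naive upper bound would be off by a factor of $M$, and the construction of the lower-bound subregion would require more delicate tuning of its dimensions.
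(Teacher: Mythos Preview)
Your proof is correct and follows essentially the same case-by-case strategy as the paper's proof of Theorem~\ref{thm:height_constant}: reduce to $\omega_{\infty,x}\asymp M^{-(\n+1)}$ and bound the integration domain between explicit product boxes, splitting according to which of $|a|,|c|,|z|$ realizes the maximum. Your lower-bound box $\{|b|\le\tfrac12,\ |w|\le|a|/(2|z|^{\n+1})\}$ in the case $|z|=M$ differs slightly from the paper's choice $\{|b|\le|a|/(2|c|),\ |w|\le|a|/(2|z|^{\n+1})\}$ and has the minor advantage of visibly satisfying the constraint $|b|\le 1$ in all subcases.
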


This implies that the sum over the constants $\cfr_x$ in
Theorem~\ref{thm:ge3_intro} converges.

\subsection{A second family of spherical threefolds}

Since the varieties $X_\n$ considered above are equivariant compactifications of
$\Gd_a^3$, Manin's conjecture is already known for them by \cite{MR1906155} (for
heights corresponding to smooth adelic metrics; note that we work with a height
corresponding to an adelic metric that is not smooth). To illustrate that our
approach can also be applied to spherical varieties without such a structure, we
consider a family of varieties $X'_\n$ for $\n \ge 2$ that do not belong to any
of the classes of varieties for which Manin's conjecture is known.

A comparison of the geometric description, the shape of the main results and
their proofs for the family $X_\n$ with the family $X'_\n$ will reveal many
similarities, but also several additional complications for $X'_\n$. In
particular, we will see that Conjecture~\ref{conj:BT} fails for $X'_\n$. Hence
the family $X_\n$ can be regarded as a warm-up for the family $X'_\n$.

Fix an integer $\n \ge 2$. Consider the weighted projective space $Y_{\n-1}$
with Cox coordinates $(a:b:c:d:y)$ and the toric modification
$Y'_\n \to Y_{\n-1}$ obtained by first blowing up the singular locus of
$Y_{\n-1}$, then blowing up the two torus invariant curves in the resulting
exceptional divisor, and finally contracting the exceptional divisor from the
first step. With Cox coordinates $(a:b:c:d:y:z:t)$, where $z$ corresponds to the
torus invariant curve in $Y'_{\n-1}$ contained in $\Vd(y)$ and $t$ to the other
one, we consider the hypersurface
\begin{equation*}
  X'_\n \coloneqq \Vd(ad-bc-y^\n z^{\n+1}) \subseteq Y'_\n\text{.}
\end{equation*}
Equipped with a suitable action of the reductive group
$\SL_2 \times \Gd_m$, it is a singular spherical threefold that is
neither horospherical (see the beginning of
Section~\ref{sec:geometry}) nor wonderful (because it is not smooth);
moreover it is not isomorphic to an equivariant compactification of
$\Gd_a^3$ since its effective cone can be shown not to be simplicial.

In Section~\ref{sec:geometry}, we will construct a desingularization
$\pi\colon \stX'_\n \to X'_\n$, and we will see that we have
\begin{align*}
  \afr=\frac{2\n+2}{\n+3} &&\text{and}&&
  \bfr=1\text{.}
\end{align*}
In Section~\ref{sec:parameterization}, we will construct an anticanonical height
\begin{equation*}
  H'\colon X'_\n \to \Rd_{>0}
\end{equation*}
by choosing sections of a very ample power of the $\Qd$-Cartier divisor
$\pi^*(-K_{X'_\n})$ on $\stX'_\n$.

The singularities of $X'_\n$ are log terminal, and the divisor
$\smash{\afr\cdot \pi^*(-K_{X'_\n})+K_{\tX'_\n}}$ is not rigid. We will find the
$\Lm$-primitive fibration
$\phi'\colon X'_\n \dashrightarrow P'_\n \cong \Pd^2_\Qd$, where we denote the
homogeneous coordinates of $\smash{\Pd^2_\Qd}$ by $(\ca:\cc:\cy)$. Again, our
main result (see Theorem~\ref{thm:bpc}) is compatible with the predictions of
\cite{MR1679843} (see Section~\ref{sec:exp-b-ge-2}).

\begin{theorem}\label{thm:b_intro}
  Let $\n \ge 2$ and $U' \coloneqq X'_\n \setminus \Vd(yzt)$. For every $\epsilon > 0$, we have
  \begin{equation*}
    N_{U',H'}(B) = 
    \rleft(\sum_{x \in \Pd^2(\Qd) \setminus (\Vd(\ca,\cc)\cup \Vd(\cy))} \cfr_x \rright) B^{\frac{2\n+2}{\n+3}} 
    + O_\epsilon(B^{1+\epsilon})\text{,}
  \end{equation*}
  where each summand $\cfr_x$ in the leading constant is Peyre's constant for
  the rational fiber $\fibphibx$. Its value is
  \begin{equation*}
    \cfr_x = \frac{1}{2}\cdot \rleft(\prod_{p\text{ prime}} \rleft(1-\frac{1}{p}\rright)\omega_{p,x}\rright)\cdot\omega_{\infty,x},
  \end{equation*}
  with (assuming that $\ca,\cc,\cy$ are coprime integral coordinates for $x$ and
  $\efr \coloneqq \tfrac{-n+1}{\n+3}$)
  \begin{align*}
    \omega_{p,x} = \bigg(\rleft(1-\frac{1}{p}\rright)\cdot \frac{1-(p^\efr)^{\nu_p(\cy)+1}}{1-p^\efr}+
    \frac{1}{p}+\frac{(p^\efr)^{\nu_p(\cy)}}{p}\bigg)
    \cdot (p^{\efr+1})^{\min(\nu_p(\ca), \nu_p(\cc))},
  \end{align*}
  and
  \begin{align*}
    \omega_{\infty,x} =
    \begin{dcases}
      \iint_{\max|\Ms'_\n(\ca,b,\cc,(b\cc+\cy^\n w)/\ca,\cy,1,1,w)| \le 1} \frac{1}{|\ca|} \,\df b\,\df w & \text{for $\ca \ne 0$,}\\
      \iint_{\max|\Ms'_\n(\ca,(\ca d -\cy^\n w)/\cc, \cc, d,\cy,1,1,w)|
        \le 1} \frac{1}{|\cc|} \,\df d\,\df w & \text{for $\cc \ne
        0$,}
    \end{dcases}
  \end{align*}
  where $\Ms'_\n(\dots)$ denotes the set of $13$ monomials
  from Remark~\ref{rem:monomials}.
\end{theorem}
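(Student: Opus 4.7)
The plan is to follow the universal torsor approach of Theorem~\ref{thm:ge3_intro}, with adjustments for the richer Cox ring and the log terminal singularities of $X'_\n$. First I would use Brion's description of the Cox ring to construct a universal torsor over a further blow-up $\shX'_\n \to \stX'_\n$, chosen so that the coprimality conditions on the Cox coordinates are as clean as possible (cf.\ Remark~\ref{rem:blt}), and obtain a parameterization of $U'(\Qd)$ by integer tuples $(a,b,c,d,y,z,t,w)$ satisfying the Cox ring relations together with explicit coprimality conditions. In these coordinates, the height $H'$ becomes the maximum of the $13$ monomials $\Ms'_\n(\dots)$ from Remark~\ref{rem:monomials}.

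Next, I would fibre the count over the $\Lm$-primitive map $\phi'$ by fixing a coprime integral representative $(\ca,\cc,\cy)$ of a point $x \in \Pd^2(\Qd) \setminus (\Vd(\ca,\cc) \cup \Vd(\cy))$ and analyzing the contribution from the fibre $\fibphibx$. On such a fibre, the Cox relation $ad-bc=y^\n z^{\n+1}$ is linear in $b$ when $\ca\ne 0$ and linear in $d$ when $\cc\ne 0$, so the remaining parameters vary freely subject to the height and coprimality constraints. A standard M\"obius inversion to remove coprimality reduces the fibre count to lattice point counting in a box-shaped region of archimedean volume $B^{(2\n+2)/(\n+3)}\omega_{\infty,x}$, and the local factors $\omega_{p,x}$ emerge from a direct computation of the $p$-adic volume of the torsor equation restricted to the fibre over $x$, matching the formulas in the theorem.

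Finally I would sum the fibre contributions $\cfr_x B^{(2\n+2)/(\n+3)}$ over $x$. Since Conjecture~\ref{conj:BT} \emph{fails} for $X'_\n$ (unlike the case of $X_\n$ handled by Theorem~\ref{thm:height_constant_intro}), the convergence of $\sum_x \cfr_x$ is the main obstacle: it cannot be deduced from a clean height comparison and must instead be established by exploiting the explicit product structure of $\omega_{p,x}$ to recognise the sum as a Dirichlet series in the heights of $\ca,\cc,\cy$, together with an ad hoc estimate for $\omega_{\infty,x}$. The accumulated lattice-point error across fibres is controlled by a dyadic decomposition in $\max(|\ca|,|\cc|,|\cy|)$, yielding the announced $O_\epsilon(B^{1+\epsilon})$ bound. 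The contributions from the loci $\Vd(\ca,\cc)$ and $\Vd(\cy)$ excluded from the sum, as well as from points on $\Vd(yzt)\cap X'_\n$, are treated by simpler auxiliary counts and absorbed into the error.
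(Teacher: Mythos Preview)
Your high-level plan is close to the paper's, but the order of operations is inverted in a way that creates a real gap. The paper (Theorem~\ref{thm:bpc}) does \emph{not} fix a fibre $(\ca:\cc:\cy)$ and then count; it works with the torsor of $\stX'_\n$ directly (no further blow-up is needed here, unlike for $X_\n$) and sums globally: first over $(b,d)$, using $\gcd(a,c)=1$ to solve the relation $ad-bc=y^\n z^{\n+1}w$ (note the factor $w$, which you dropped); then M\"obius inversions remove $\gcd(b,d,z)$, $\gcd(b,d,w)$, and $\gcd(y,w)$; then the $w$-sum is replaced by an integral. Only \emph{after} all this does the substitution $(\ca,\cc,\cy)=(at,ct,yz)$ appear, and the residual discrete sum over $z$, $t$ and the M\"obius variables is packaged into a function $\vartheta(\ca,\cc,\cy)$, which is then identified with a multiple of $\cfr_x$ via the known result for $\Pd^1$. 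Convergence of $\sum_x\cfr_x$ is handled separately in Proposition~\ref{prop:cxb-finite} by a direct bound $V_2(\ca,\cc,\cy,1,1;1)\ll \max\{|\ca|,|\cc|\}^{-2-\mu}|\cy|^{-1-\mu}$, not by recognizing a Dirichlet series.

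The gap in your plan is twofold. First, a fibre point $x=(\ca:\cc:\cy)$ with coprime integer coordinates does not correspond to a single tuple $(a,c,y,z,t)$ on the torsor: one must take $t\mid\gcd(\ca,\cc)$, $z\mid\cy$, and set $a=\ca/t$, $c=\cc/t$, $y=\cy/z$, subject to the remaining coprimality constraints. It is precisely this $x$-dependent sum over lifts, carrying weights $|z|^{(-\n+1)/(\n+3)}|t|^{4/(\n+3)}$ after rescaling together with the M\"obius factors, that produces the nontrivial Euler product $\prod_p\lambda_p\omega_{p,x}$; your phrase ``the $p$-adic volume of the torsor equation restricted to the fibre'' does not capture this mechanism and would naively give the $x$-independent factor $1/\zeta(2)$ as for $X_\n$. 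Second, your error control is not adequate as stated: if you count on each fibre $\Pd^1$ separately, the lattice-point error depends on $x$ through the shape of the region $\max|\Ms'_\n(\dots)|\le B$, and a dyadic decomposition in $\max(|\ca|,|\cc|,|\cy|)$ alone does not obviously make these errors sum to $O_\epsilon(B^{1+\epsilon})$. The paper sidesteps this entirely by estimating the $(b,d)$- and $w$-errors once, against global height inequalities, before ever introducing fibre coordinates.
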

In particular, the expressions for the $p$-adic densities $\omega_{p,x}$ are
apparently much more complicated than in previous applications of the universal
torsor method for Manin's conjecture. Also note that $\omega_{p,x}$ depends on
the base point $x$, while the $p$-adic densities in Theorem~\ref{thm:ge3_intro}
are independent of $x$.

Finally, Conjecture~\ref{conj:BT} of Batyrev--Tschinkel is not true for
$X'_\n$. In fact, even a weaker \enquote{up to $\epsilon$}-version of this
conjecture fails (see Theorem~\ref{thm:height_constant_bar}; roughly, the
reason is that $\gcd(\ca,\cc)^{\efr+1}$ appears in the product of the $p$-adic
densities in $\cfr_x$):

\begin{theorem}\label{thm:height_constant_intro_b}
  Let $\overline{H}\colon \Pd^2(\Qd) \to \Rd_{>0}$ be a height relative to an
  arbitrary line bundle. Then there are $\epsilon > 0$ such that there does
  not exist an open subset $V \subseteq \Pd^2(\Qd)$ with positive constants
  $c_1, c_2$ such that for every $x \in V$ we have
  \begin{align*}
    c_1 \overline{H}(x)^{1-\epsilon} \le \cfr_x \le c_2 \overline{H}(x)^{1+\epsilon}\text{.}
  \end{align*}
\end{theorem}

Nevertheless, we can show that the sum over the constants $\cfr_x$ in
Theorem~\ref{thm:b_intro} converges (see Proposition~\ref{prop:cxb-finite}).

See \cite[\S 4.2]{arXiv:1707.03231} for a second example where
Conjecture~\ref{conj:BT} fails; in that case of a certain conic bundle
over $\Pd^1$, however, the upper bound of the conjecture holds
\enquote{up to $\epsilon$}. For an investigation of the behavior of
Peyre's constant for families of diagonal quartic threefolds, see
\cite[Theorem~1.6]{MR2333743}.

\subsection*{Acknowledgements}

The authors are grateful to Daniel Loughran and the referee for several useful comments.

\section{Two families of spherical hypersurfaces in toric
  varieties}\label{sec:geometry}

Let $G$ be a connected reductive group over $\Kc$, and let
$B \subseteq G$ be a Borel subgroup. A normal $G$-variety $X$ over
$\Kc$ is called \emph{spherical} if it contains a dense $B$-orbit.
Over an algebraically closed field of characteristic $0$ (such as
$\Kc$), there is a complete combinatorial description of spherical
varieties. First, spherical homogeneous spaces are described by a
program initiated by Luna \cite{lun01}, which has been recently
completed \cite{bp15b,cf2,los09a}. Then, given a spherical homogeneous
space $G/H$, the Luna--Vust theory \cite{lv83,kno91} describes all
\emph{spherical embeddings}, \ie $G$-equivariant open embeddings
$G/H \hookrightarrow X$ into a normal irreducible $G$-variety $X$, in
terms of \emph{colored fans}, which generalize the fans of toric
varieties. For further details, we refer to the general references
\cite{bl11,per14,tim11}.

For $G \coloneqq \SL_2$ the spherical $G$-varieties are at most $2$-dimensional,
where each complete one is isomorphic to $\Pd^1 \times \Pd^1$, $\Pd^2$, or the
blow-up of $\Pd^2$ in one point. The next possible step is to consider
$G \coloneqq \SL_2 \times \Gd_m$.
Let $\varepsilon \colon \Gd_m \to \Gd_m$ be a primitive
character and consider the spherical subgroup
\begin{align*}
  H \coloneqq \{(\lambda, \varepsilon(\lambda)) : \lambda \in T_{\SL_2}\} \subseteq G\text{,}
\end{align*}
where $T_{\SL_2}$ denotes a maximal torus in $\SL_2$. Then
$T \coloneqq T_{\SL_2} \times \Gd_m$ is maximal torus in $G$. Let
$B \subseteq G$ be a Borel subgroup containing $T$. Let
$\alpha \in \Xf(T) = \Xf(B)$ be the unique simple root corresponding
to these choices.

We can now briefly introduce the central combinatorial objects
associated to $G/H$ by the Luna-Vust theory. The \emph{weight lattice}
$\Mm \subseteq \Xf(B)$, \ie the lattice of weights of
$B$-semi-invariants (or $B$-eigenvectors) occurring in $\Qbar(G/H)$,
has basis
$(\frac{1}{2}\alpha + \varepsilon, \frac{1}{2}\alpha - \varepsilon)$.
The \emph{set of colors} $\Dm$, \ie the set of $B$-invariant prime
divisors in $G/H$, contains two elements, which we denote by $D'$ and
$D''$. The set $\Dm$ is equipped with the map
$\rho \colon \Dm \to \Nm \coloneqq \Hom(\Mm, \Zd)$ defined by
$\langle \rho(D), \chi \rangle \coloneqq \nu_D(f_\chi)$ where $\nu_D$
is the valuation on $\Qbar(G/H)$ which is induced by the prime divisor
$D$ and $f_\chi \in \Qbar(G/H)$ is a $B$-semi-invariant of weight
$\chi \in \Mm$ (which is defined up to a constant factor because of
the open $B$-orbit). We can choose $D'$ and $D''$ such that
$(\rho(D'), \rho(D''))$ is the dual basis to
$(\frac{1}{2}\alpha + \varepsilon, \frac{1}{2}\alpha - \varepsilon)$
of $\Nm$. Finally, the \emph{valuation cone}
$\Vm \subseteq \Nm_{\Qd} \coloneqq \Hom(\Mm, \Qd)$, which can be
identified with the $\Qd$-valued $G$-invariant discrete valuations on
$\Qbar(G/H)$, is given by
$\Vm = \{v \in \Nm_\Qd : \langle v, \alpha \rangle \le 0\}$. Spherical
varieties with $\Vm = \Nm_\Qd$ are called \emph{horospherical}. In
particular, because we have $\Vm \ne \Nm_\Qd$, no embedding of our
example $G/H$ is horospherical. The situation inside the vector space
$\Nm_\Qd$ is illustrated in the following picture.

\begin{align*}
  \begin{tikzpicture}[scale=0.7]
    \clip (-2.24, -2.24) -- (2.24, -2.24) -- (2.24, 2.24) -- (-2.24, 2.24) -- cycle;
    \fill[color=gray!30] (-3, 3) -- (3, -3) -- (-3, -3) -- cycle;
    \foreach \x in {-3,...,3} \foreach \y in {-3,...,3} \fill (\x, \y) circle (1pt);
    \draw (1, 0) circle (3pt);
    \draw (0, 1) circle (3pt);
    \path (1, 0) node[anchor=north] {{\tiny{$\rho(D')$}}};
    \path (0, 1) node[anchor=north] {{\tiny{$\rho(D'')$}}};
    \path (-1.5, -1.5) node {{$\Vm$}};
  \end{tikzpicture}
\end{align*}

A spherical embedding $G/H \hookrightarrow X$ is now described by a
\emph{colored fan}, which is a set of \emph{colored cones}, which are
pairs $(\Cm, \Fm)$ where $\Cm$ is a polyhedral cone in $\Nm_{\Qd}$ and
$\Fm$ is a subset of $\Dm$, and where moreover certain properties and
compatibility conditions are satisfied. Similarly to the case of toric
varieties, the colored cones are in bijection with the $G$-orbits in
$X$. The colored cones corresponding to $G$-orbits of codimension $1$
are easier to describe: they have the form $(\rho, \emptyset)$ where
$\rho$ is a ray in $\Vm$, which means that we have $\rho = \cone(u)$
for a uniquely determined primitive element $u \in \Vm \cap \Nm$.

Now let $G/H \hookrightarrow X$ be a spherical embedding, and let
$u_1, \dots, u_n \in \Vm \cap \Nm$ be the primitive elements
corresponding to (the open orbits in) the $G$-invariant prime divisors
$D_1, \dots, D_n$ in $X$. According to
\cite[Proposition~4.1.1]{bri07}, the divisor class group $\Cl(X)$ is
generated by divisor classes $[D_1], \dots, [D_n]$ and the divisor
classes of the colors $\Dm$, and the relations can be computed from
the relative position of the $u_1, \dots, u_n \in \Nm$ similarly to
the toric case. Moreover, the Cox ring $\Rm(X)$ of $X$ can be obtained
explicitly using \cite[Theorem~4.3.2]{bri07} or
\cite[Theorem~3.6]{gag14}:

\begin{prop}
  \label{prop:cox1}
  Let $r_i \coloneqq -\langle u_i, \alpha \rangle$. Then we have
  \begin{align*}
    \Rm(X) = \Kc[a, b, c, d, z_1, \dots, z_n]/\langle ad-bc-z_1^{r_1}\cdots z_n^{r_n}\rangle
  \end{align*}
  with $\deg(a) = \deg(c) = [D']$, $\deg(b) = \deg(d) = [D'']$, and
  $\deg(z_i) = [D_{i}]$.
\end{prop}

For every $\n \ge 2$, we consider the spherical embedding
$G/H \hookrightarrow X_\n$ with exactly one $G$-invariant prime divisor
corresponding to the primitive element
\begin{align*}
  u_z \coloneqq -\rho(D')-\n\rho(D'') \in \Vm \cap \Nm\text{.}
\end{align*} 
It can be shown that $X_\n$ is isomorphic to an equivariant compactification of
$\Gd_a^3$.

We therefore also consider the spherical embedding
$G/H \hookrightarrow X_\n'$ with two additional $G$-invariant prime
divisors corresponding to the primitive elements
\begin{align*}
  u_y &\coloneqq -\rho(D')-(\n-1)\rho(D'') \in \Vm \cap \Nm\text{,}\\
  u_t &\coloneqq \rho(D')-\rho(D'') \in \Vm \cap \Nm\text{.}
\end{align*}
It can be shown that the effective cone of $X'_\n$ is not simplicial, hence
$X'_\n$ is not isomorphic to an equivariant compactification of $\Gd_a^3$. The
colored fans of $X_2$ and $X_2'$ are illustrated in the following pictures.
\begin{align*}
  \begin{tikzpicture}[scale=0.7]
    \clip (-2.24, -2.24) -- (2.24, -2.24) -- (2.24, 2.24) -- (-2.24, 2.24) -- cycle;
    \fill[color=gray!30] (-3, 3) -- (3, -3) -- (-3, -3) -- cycle;
    \foreach \x in {-3,...,3} \foreach \y in {-3,...,3} \fill (\x, \y) circle (1pt);
    \draw (1, 0) circle (3pt);
    \draw (0, 1) circle (3pt);
    \draw (-1, -2) circle (3pt);
    \draw (0,0) -- (3,0);
    \draw (0,0) -- (0,3);
    \draw (0,0) -- (-2, -4);
    \path (-1, -2) node[anchor=east] {{\tiny{$u_z$}}};
    \begin{scope}
      \clip (0,0) -- (0,1) -- (-1,1) -- (-1, -2) -- cycle; \draw (0,0) circle (9pt);
    \end{scope}
    \begin{scope}
      \clip (0,0) -- (-1, -2) -- (1, -2) -- (1, 0) -- cycle; \draw (0,0) circle (13pt);
    \end{scope}
  \end{tikzpicture}
  &&
  \begin{tikzpicture}[scale=0.7]
    \clip (-2.24, -2.24) -- (2.24, -2.24) -- (2.24, 2.24) -- (-2.24, 2.24) -- cycle;
    \fill[color=gray!30] (-3, 3) -- (3, -3) -- (-3, -3) -- cycle;
    \foreach \x in {-3,...,3} \foreach \y in {-3,...,3} \fill (\x, \y) circle (1pt);
    \draw (1, 0) circle (3pt);
    \draw (0, 1) circle (3pt);
    \draw (-1, -2) circle (3pt);
    \draw (-1, -1) circle (3pt);
    \draw (1, -1) circle (3pt);
    \draw (0,0) -- (3,0);
    \draw (0,0) -- (0,3);
    \draw (0,0) -- (-2, -4);
    \draw (0,0) -- (-4, -4);
    \draw (0,0) -- (4, -4);
    \path (-1, -2) node[anchor=east] {{\tiny{$u_z$}}};
    \path (-1, -1) node[anchor=east] {{\tiny{$u_y$}}};
    \path (1, -1) node[anchor=west] {{\tiny{$u_t$}}};
    \begin{scope}
      \clip (0,0) -- (0,1) -- (-1,1) -- (-1, -1) -- cycle; \draw (0,0) circle (9pt);
    \end{scope}
    \begin{scope}
      \clip (0,0) -- (-1,-2) -- (-1,-1) -- cycle; \draw (0,0) circle (13pt);
    \end{scope}
    \begin{scope}
      \clip (0,0) -- (-1, -2) -- (1, -2) -- (1, -1) -- cycle; \draw (0,0) circle (9pt);
    \end{scope}
  \end{tikzpicture}
\end{align*}

Using \cite{arXiv:1408.5358}, we consider $X_\n$ and $X'_\n$ as varieties over
$\Qd$. According to Proposition~\ref{prop:cox1}, we have
\begin{align*}
  \Rm(X_\n) = \Qd[a,b,c,d,z]/\langle ad-bc-z^{\n+1} \rangle
\end{align*}
with $\Cl(X_\n) \cong \Z$ where $\deg(a) = \deg(c) = \deg(z) = 1$ and
$\deg(b) = \deg(d) = \n$. Moreover, the graded ring $\Qd[a,b,c,d,z]$, where we
ignore the relation, is identified as the Cox ring of the weighted projective
space \smash{$Y_\n \coloneqq \Pd_{\Qd}(1,\n,1,\n,1)$}. It follows that $X_\n$ is
a hypersurface in $Y_\n$ defined by $ab-cd-z^{\n+1} = 0$.

Similarly, we have
\begin{align*}
  \Rm(X_\n') = \Qd[a,b,c,d,y,z,t]/\langle ad-bc-y^{\n}z^{\n+1} \rangle
\end{align*}
with $\Cl(X_\n') \cong \Z^3$ where $\deg(a) = \deg(c) = (1, 1, -1)$,
$\deg(b) = \deg(d) = (\n, \n-1, 1)$, $\deg(z) = (1,0,0)$, $\deg(y) = (0,1,0)$,
and $\deg(t) = (0,0,1)$. Again, the variety $X_\n'$ is a hypersurface in a toric
variety $Y_\n'$ with graded Cox ring $\Qd[a,b,c,d,y,z,t]$.

According to \cite[4.1 and 4.2]{bri97} or \cite[Proposition~3.3.3.2]{adhl15}, we
have the anticanonical divisor classes $-K_{X_\n} = \n+2$ and
$-K_{X_\n'} = (\n+2,\n+1,1)$. Moreover, according to \cite[Theorem~1.9]{gh15} or
\cite[3.3.2.9]{adhl15}, the varieties $X_\n$ and $X_\n'$ are Fano for every
$\n \ge 2$, and the variety $X_2$ is Gorenstein.

The singular loci are $X_{\n,\sing} = X \cap \Vd(a,c,z)$ and
$X'_{\n,\sing} = X' \cap \Vd(z,t)$. We construct desingularizations
$\pi\colon \stX_\n \to X_\n$ and $\pi'\colon \stX'_\n \to X'_\n$ by subdividing
their colored fans. We add a $G$-invariant prime divisor corresponding to the
primitive element $u_w \coloneqq -\rho(D'') \in \Vm \cap \Nm$. The resulting
colored fans of the spherical varieties $\stX_2$ and $\stX_2'$ are illustrated
in the following pictures.

\begin{align*}
  \begin{tikzpicture}[scale=0.7]
    \clip (-2.24, -2.24) -- (2.24, -2.24) -- (2.24, 2.24) -- (-2.24, 2.24) -- cycle;
    \fill[color=gray!30] (-3, 3) -- (3, -3) -- (-3, -3) -- cycle;
    \foreach \x in {-3,...,3} \foreach \y in {-3,...,3} \fill (\x, \y) circle (1pt);
    \draw (1, 0) circle (3pt);
    \draw (0, 1) circle (3pt);
    \draw (-1, -2) circle (3pt);
    \draw (0, -1) circle (3pt);
    \draw (0,0) -- (3,0);
    \draw (0,0) -- (0,3);
    \draw (0,0) -- (-2, -4);
    \draw (0,0) -- (0, -4);
    \path (-1, -2) node[anchor=east] {{\tiny{$u_z$}}};
    \path (0, -1) node[anchor=west] {{\tiny{$u_w$}}};
    \begin{scope}
      \clip (0,0) -- (0,1) -- (-1,1) -- (-1, -2) -- cycle; \draw (0,0) circle (9pt);
    \end{scope}
    \begin{scope}
      \clip (0,0) -- (-1, -2) -- (0, -2) -- cycle; \draw (0,0) circle (13pt);
    \end{scope}
    \begin{scope}
      \clip (0,0) -- (0, -1) -- (1, -2) -- (1, 0) -- cycle; \draw (0,0) circle (9pt);
    \end{scope}
  \end{tikzpicture}
&&
  \begin{tikzpicture}[scale=0.7]
    \clip (-2.24, -2.24) -- (2.24, -2.24) -- (2.24, 2.24) -- (-2.24, 2.24) -- cycle;
    \fill[color=gray!30] (-3, 3) -- (3, -3) -- (-3, -3) -- cycle;
    \foreach \x in {-3,...,3} \foreach \y in {-3,...,3} \fill (\x, \y) circle (1pt);
    \draw (1, 0) circle (3pt);
    \draw (0, 1) circle (3pt);
    \draw (-1, -2) circle (3pt);
    \draw (-1, -1) circle (3pt);
    \draw (1, -1) circle (3pt);
    \draw (0, -1) circle (3pt);
    \draw (0,0) -- (3,0);
    \draw (0,0) -- (0,3);
    \draw (0,0) -- (-2, -4);
    \draw (0,0) -- (-4, -4);
    \draw (0,0) -- (4, -4);
    \draw (0,0) -- (0, -4);
    \path (-1, -2) node[anchor=east] {{\tiny{$u_z$}}};
    \path (-1, -1) node[anchor=east] {{\tiny{$u_y$}}};
    \path (1, -1) node[anchor=west] {{\tiny{$u_t$}}};
    \path (0, -1) node[anchor=west] {{\tiny{$u_w$}}};
    \begin{scope}
      \clip (0,0) -- (0,1) -- (-1,1) -- (-1, -1) -- cycle; \draw (0,0) circle (9pt);
    \end{scope}
    \begin{scope}
      \clip (0,0) -- (-1,-2) -- (-1,-1) -- cycle; \draw (0,0) circle (13pt);
    \end{scope}
    \begin{scope}
      \clip (0,0) -- (-1, -2) -- (1, -2) -- (0, -1) -- cycle; \draw (0,0) circle (9pt);
    \end{scope}
    \begin{scope}
      \clip (0,0) -- (0, -1) -- (1, -1) -- cycle; \draw (0,0) circle (13pt);
    \end{scope}
  \end{tikzpicture}
\end{align*}
According to Proposition~\ref{prop:cox1}, we have
\begin{align*}
  \Rm(\tX_\n) = \Qd[a,b,c,d,z,w]/\langle ad-bc-z^{\n+1}w \rangle
\end{align*}
with $\Pic(\stX_\n) \cong \Cl(\stX_\n) \cong \Z^2$ where
$\deg(a) = \deg(c) = \deg(z) = (1,0)$, $\deg(b) = \deg(d) = (\n, 1)$, and
$\deg(w) = (0,1)$. Moreover, we have
\begin{align*}
  \Rm(\tX_\n') = \Qd[a,b,c,d,y,z,t,w]/\langle ad-bc-y^{\n}z^{\n+1}w \rangle
\end{align*}
with $\Pic(\stX_\n') \cong \Cl(\stX_\n') \cong \Z^4$ where
$\deg(a) = \deg(c) = (1,1,-1,0)$, $\deg(b) = \deg(d) = (\n, \n-1, 1, 1)$,
$\deg(z) = (1,0,0,0)$, $\deg(y) = (0,1,0,0)$, $\deg(t) = (0,0,1,0)$, and
$\deg(w) = (0,0,0,1)$.

In order to obtain explicit descriptions of $\stX_\n$ and $\stX'_\n$, we use
\cite[Theorem~2.2.2.2, Proposition~3.3.2.9, and Construction~3.2.1.3]{adhl15}
and \cite{arXiv:1408.5358}, according to which the quasi-affine varieties
\begin{align*}
  \Tm_\n &\coloneqq \Spec(\Rm(\tX_\n)) \setminus (\Vd(a,c,z) \cup \Vd(b,d,w))\text{,}\\
  \Tm_\n' &\coloneqq \Spec (\Rm(\tX_\n')) \setminus (\Vd(a,c) \cup \Vd(b,d,z) \cup \Vd(b,d,w) \cup \Vd(y, w) \cup \Vd(y, t) \cup \Vd(z, t))
\end{align*}
are universal torsors $\Tm_\n \to \stX_\n$ and $\Tm_\n' \to \stX_\n'$ with
respect to the natural actions of the tori
\begin{equation*}
  \Spec(\Qd{}[\Pic(\stX_\n)]) \cong \Gd_m^2 \text{ and }
  \Spec(\Qd{}[\Pic(\stX'_\n)]) \cong \Gd_m^4
\end{equation*}
respectively.

According to \cite[4.1 and 4.2]{bri97} or \cite[Proposition~3.3.3.2]{adhl15}, we
have
\begin{align*}
  -K_{\tX_\n} &= (\n+2,2)\text{,} & \pi^*(-K_{X_\n}) &= \rleft(\n+2,\tfrac{\n+2}{\n}\rright)\text{,}\\
  -K_{\tX'_\n} &= (\n+2,\n+1,1,2)\text{,} & \pi^*(-K_{X'_\n}) &= \rleft(\n+2,\n+1,1,\tfrac{\n+3}{\n+1}\rright)\text{.}
\end{align*}
In particular, the resolution $\pi\colon \stX_\n \to X_\n$ is crepant and $X_\n$
has at worst canonical singularities if and only if $\n=2$ (see, for instance,
\cite{ab04}).

\section{Parameterization of rational points via universal torsors}
\label{sec:parameterization}

Using the universal torsors $\Tm_\n$ and $\Tm_\n'$ from
Section~\ref{sec:geometry}, we parameterize the rational points on $X_\n$ and
$X_\n'$, respectively.

Consider the line bundles
\begin{align*}
  L &\coloneqq \tfrac{\n}{\n+2} \cdot \pi^*(-K_{X_\n}) = (\n,1)\text{,} \\
  L' &\coloneqq (\n+1)\cdot\pi^*(-K_{X'_\n}) = (\n^2+3\n+2, \n^2+2\n+1, \n+1, \n+3)\text{.}
\end{align*}
We define
\begin{align*}
  \Ms_{\n}(a,b,c,d,z,w) &\coloneqq \{\text{monomials in $\Rm(\stY_\n)$ of degree $L$ restricted to $\stX_\n$}\}\text{,}\\
  \Ms'_{\n}(a,b,c,d,y,z,t,w) &\coloneqq \{\text{monomials in $\Rm(\stY'_\n)$ of degree $L'$ restricted to $\stX'_\n$}\}\text{.}
\end{align*} 
Then we have
\begin{align*}
  H(\pi(a:b:c:d:z:w)) &\coloneqq \left(\frac{\max|\Ms_{\n}(a,b,c,d,z,w)|}{\gcd \Ms_{\n}(a,b,c,d,z,w)}\right)^{(\n+2)/\n}\text{,}\\
  H'(\pi(a:b:c:d:y:z:t:w)) &\coloneqq \left(\frac{\max|\Ms'_{\n}(a,b,c,d,y,z,t,w)|}{\gcd \Ms'_{\n}(a,b,c,d,y,z,t,w)}\right)^{1/(\n+1)}
\end{align*}
for anticanonical heights $H$ and $X_\n$ and $H'$ on $X'_\n$.

We are now going to state the counting problem for $X_\n$. We consider the open
subset
\begin{align*}
  U \coloneqq \stX_\n \setminus \Vd(w) = X_\n \setminus \Vd(a,c,z)\text{.}
\end{align*}

\begin{prop}
  \label{prop:41}
  There is a natural $4$-to-$1$ correspondence between
  \begin{align*}
    \Um \coloneqq \left\{(a,b,c,d,z,w) \in \Zd^6 : 
    \begin{aligned}
      &w \ne 0;\ ad-bc-z^{\n+1}w=0\\
      &\gcd(a,c,z)=\gcd(b,d,w)=1
    \end{aligned}
        \right\}
  \end{align*}
  and the set $U(\Qd)$. Moreover, for $(a,b,c,d,z,w) \in \Um$, we have
  \begin{equation*}
    H(\pi(a:b:c:d:z:w)) = \max|\Ms_{\n}(a,b,c,d,z,w)|^{(\n+2)/\n}\text{.}
  \end{equation*}
\end{prop}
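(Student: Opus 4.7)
The proof is a standard application of the universal torsor formalism for the Mori dream space $\stX_\n$, combined with Hilbert~90 to lift rational points from $\stX_\n$ to the torsor $\Tm_\n$. My plan is to set up the forward map $\Um\to U(\Qd)$, verify surjectivity by normalizing an arbitrary lift under the $\Gd_m^2$-action, check that each fiber has exactly four elements, and conclude with a short $\gcd$ computation for the height.

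Given $(a,b,c,d,z,w)\in\Um$, the equation $ad-bc-z^{\n+1}w=0$ realizes the tuple as an integral point of $\Spec\Rm(\stX_\n)$; the conditions $\gcd(a,c,z)=\gcd(b,d,w)=1$ together with $w\ne 0$ ensure that it avoids $\Vd(a,c,z)\cup\Vd(b,d,w)$, hence defines a point of $\Tm_\n(\Zd)$. Composing with the quotient $\Tm_\n\to\stX_\n$ and with $\pi\colon\stX_\n\to X_\n$ produces a point in $X_\n(\Qd)$ which lies in $U = X_\n \setminus \Vd(a,c,z)$ because $w\ne 0$.

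For surjectivity, take $x\in U(\Qd)$. Since $\pi$ is an isomorphism over $U$, we lift to $\stX_\n(\Qd)$. The universal torsor $\Tm_\n\to\stX_\n$ is a $\Gd_m^2$-torsor, so the obstruction to lifting further lies in $H^1(\Spec\Qd,\Gd_m^2)$, which vanishes by Hilbert~90; we thus obtain $(a,b,c,d,z,w)\in\Tm_\n(\Qd)$. Under the grading from Section~\ref{sec:geometry}, the torus acts by $(\lambda,\mu)\cdot(a,b,c,d,z,w)=(\lambda a,\lambda^\n\mu b,\lambda c,\lambda^\n\mu d,\lambda z,\mu w)$. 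Since $(a,c,z)\ne 0$ by the $\Tm_\n$ condition, there exists $\lambda\in\Qd^*$, unique up to sign, making $(\lambda a,\lambda c,\lambda z)$ a primitive integer triple. With $\lambda$ fixed, the triple $(\lambda^\n b,\lambda^\n d,w)$ is still nonzero (by the condition $(b,d,w)\ne 0$ in $\Tm_\n$), so there exists $\mu\in\Qd^*$, unique up to sign, making $(\lambda^\n\mu b,\lambda^\n\mu d,\mu w)$ a primitive integer triple. The rescaled tuple lies in $\Um$ and maps to $x$. The stabilizer in $\{\pm 1\}^2$ of any such normalized tuple is trivial: $(-1,1)$ negates the nonzero $(a,c,z)$, while $(1,-1)$ and $(-1,-1)$ both negate $w\ne 0$. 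Each fiber therefore consists of exactly four elements.

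For the height identity it suffices to show $\gcd\Ms_{\n}(a,b,c,d,z,w)=1$ for every $(a,b,c,d,z,w)\in\Um$. The degree-$L=(\n,1)$ monomials in $\Qd[a,b,c,d,z,w]$ include $b$, $d$, and all $a^ic^jz^kw$ with $i+j+k=\n$; in particular $a^\n w$, $c^\n w$, and $z^\n w$ appear in $\Ms_\n$. If a prime $p$ divides every element of $\Ms_\n$, then either $p\nmid w$, forcing $p\mid a,c,z$ and contradicting $\gcd(a,c,z)=1$, or $p\mid w$, in which case $p\mid b,d,w$ contradicts $\gcd(b,d,w)=1$. Hence $\gcd\Ms_\n=1$ and the expression defining $H$ in Section~\ref{sec:parameterization} reduces as stated. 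The only delicate point is the clean decoupling of the two rescalings, which works because $(a,c,z)$ is acted on by $\lambda$ alone and can therefore be normalized before turning to the remaining coordinates; everything else is a routine application of the Cox ring/universal torsor dictionary.
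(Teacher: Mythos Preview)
Your argument is correct. It is essentially the ``elementary manipulations'' route that the paper explicitly mentions as an alternative, carried out in full: you lift to $\Tm_\n(\Qd)$ via Hilbert~90, normalize the two blocks $(a,c,z)$ and $(b,d,w)$ separately using the decoupled $\Gd_m^2$-action, check that the $\{\pm1\}^2$-stabilizer is trivial, and verify $\gcd\Ms_\n=1$ directly from the monomial list. The paper's primary proof takes a slightly different path: it works on the ambient smooth toric variety $\stY_\n$, invokes Salberger's construction of a toric $\Zd$-model and its integral universal torsor to obtain the $4$-to-$1$ map $\mathfrak{W}_\n(\Zd)\to\stY_\n(\Qd)$ together with the height identity in one stroke, and then simply restricts to the hypersurface $ad-bc-z^{\n+1}w=0$. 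The trade-off is that the paper's argument is shorter because the lifting, normalization, and $\gcd$ computation are all absorbed into the cited toric result, whereas your argument is self-contained and makes transparent exactly where each coprimality condition is used.
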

\begin{proof}
  The toric variety $\stY_\n$ comes from a regular fan. According to
  \cite[Section~8]{MR1679841}, we may construct a toric scheme $\stYf_\n$ over
  $\Spec(\Zd)$, together with a map
  \begin{align*}
    \mathfrak{W}_\n \coloneqq \Spec(\Z[a,b,c,d,z,w]) \setminus (\Vd(a,c,z)\cup \Vd(b,d,w)) \to \stYf_\n\text{,}
  \end{align*}
  which is a model for the universal torsor $\mathcal{W}_\n \to \stY_\n$, obtain
  a $4$-to-$1$ quotient
  \begin{align*}\mathfrak{W}_\n(\Zd) \to \stYf_\n(\Zd) = \stY_\n(\Qd)
  \end{align*}
  for the $\Gd_m^2(\Zd) \cong \{\pm 1\}^2$-action as well as
  the claim on the height function. As we 
  have
  \begin{align*}
    \mathfrak{W}_\n(\Zd) = \left\{(a,b,c,d,z,w) \in \Zd^6 :
    \gcd(a,c,z)=\gcd(b,d,w)=1
    \right\}\text{,}
  \end{align*}
  the result follows after restricting to the equation $ad-bc-z^{\n+1}w=0$.

  Alternatively, the claims can easily be verified by elementary manipulations
  of the defining equation.
\end{proof}

\begin{cor}\label{cor:countingproblem_i}
  We have that $N_{U,H}(B^{(\n+2)/\n})$ is equal to
  \begin{align*}
    \frac{1}{4} \#\left\{(a,b,c,d,z,w) \in \Zd^6 : 
    \begin{aligned}
      &w \ne 0;\ ad-bc-z^{\n+1}w=0\\
      &\gcd(a,c,z)=\gcd(b,d,w)=1\\
      &\max|\Ms_\n(a,b,c,d,z,w)| \le B
    \end{aligned}
        \right\}.
  \end{align*}
\end{cor}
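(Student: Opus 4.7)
The plan is to derive the corollary as a direct bookkeeping consequence of Proposition~\ref{prop:41}. First I would expand the definition $N_{U,H}(B^{(\n+2)/\n}) = \#\{x \in U(\Qd) : H(x) \le B^{(\n+2)/\n}\}$ and apply the $4$-to-$1$ correspondence between $\Um$ and $U(\Qd)$ supplied by the proposition. Since every rational point in $U(\Qd)$ has exactly four preimages in $\Um$ under the map $(a,b,c,d,z,w) \mapsto \pi(a:b:c:d:z:w)$, the count $N_{U,H}(B^{(\n+2)/\n})$ equals $\tfrac{1}{4}$ times the number of tuples in $\Um$ whose image satisfies $H(\pi(a:b:c:d:z:w)) \le B^{(\n+2)/\n}$.

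Next I would substitute the explicit height formula from Proposition~\ref{prop:41}, namely $H(\pi(a:b:c:d:z:w)) = \max|\Ms_{\n}(a,b,c,d,z,w)|^{(\n+2)/\n}$ for tuples in $\Um$. Since the function $t \mapsto t^{(\n+2)/\n}$ is strictly increasing on the nonnegative reals, the bound $H(\pi(\dots)) \le B^{(\n+2)/\n}$ is equivalent to $\max|\Ms_{\n}(a,b,c,d,z,w)| \le B$. Unpacking the definition of $\Um$ then yields precisely the set displayed in the statement, establishing the equality.

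There is no real obstacle here: the corollary is simply a repackaging of Proposition~\ref{prop:41} into a counting form convenient for the subsequent analytic work. The only mild subtlety worth noting is the monotonicity check used to transfer the height inequality from the range $B^{(\n+2)/\n}$ to the more natural bound $B$ on the monomial maximum; this is the parameterization in which the later lattice point count and summations (culminating in Theorem~\ref{thm:final_summations}) will be carried out.
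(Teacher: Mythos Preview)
Your proposal is correct and is exactly the approach the paper intends: the corollary is stated without proof in the paper, as an immediate consequence of Proposition~\ref{prop:41}, and you have simply spelled out the two obvious steps (apply the $4$-to-$1$ correspondence, then use the height formula together with monotonicity of $t\mapsto t^{(\n+2)/\n}$).
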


As the Diophantine equation $ad-bc = z^{\n+1}w$ is easier to solve for $d$ or
$b$ under the additional condition $\gcd(a,c) = 1$, we will use the following
counting problem, which introduces an additional variable.

\begin{cor}\label{cor:countingproblem}
  We have that $N_{U,H}(B^{(\n+2)/\n})$
  is equal to
  \begin{align*}
    \frac{1}{8} \#\left\{(a,b,c,d,z,w,t) \in \Zd^7 :
    \begin{aligned}
      &wt \ne 0;\ ad-bc-z^{\n+1}w=0\\
      &\gcd(a,c)=\gcd(b,d,w)=\gcd(z,t)=1\\
      &|a^{\n}wt^{\n+1}|,|c^{\n}wt^{\n+1}|,|z^{\n}wt|,|b|,|d| \le B
    \end{aligned}
        \right\}.
  \end{align*}
\end{cor}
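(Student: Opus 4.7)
My plan is to deduce Corollary~\ref{cor:countingproblem} from Corollary~\ref{cor:countingproblem_i} in two moves: first I will cut down the full set $\Ms_\n$ of height-defining monomials to just five, and then I will introduce the extra integer variable $t$ encoding (up to sign) the quantity $\gcd(a,c)$.

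For the first move, every monomial in $\Ms_\n$ has bidegree $(\n,1)$ with respect to $\Pic(\stX_\n)\cong\Z^2$, and hence is either $b$, $d$, or of the form $a^ic^jz^kw$ with $i+j+k=\n$. For such a monomial, the elementary identity
\begin{equation*}
  |a^ic^jz^kw| = |a^\n w|^{i/\n}|c^\n w|^{j/\n}|z^\n w|^{k/\n}
\end{equation*}
together with the fact that a weighted geometric mean is bounded by the maximum gives $|a^ic^jz^kw|\le\max(|a^\n w|,|c^\n w|,|z^\n w|)$. Therefore the condition $\max|\Ms_\n(a,b,c,d,z,w)|\le B$ in Corollary~\ref{cor:countingproblem_i} reduces to the five inequalities $|a^\n w|,|c^\n w|,|z^\n w|,|b|,|d|\le B$.

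For the second move I consider the map $\Phi(a,b,c,d,z,w,t)\coloneqq(at,b,ct,d,z,wt)$ from 7-tuples satisfying the conditions of Corollary~\ref{cor:countingproblem} to candidate 6-tuples. The equation is preserved since $(at)d-b(ct)-z^{\n+1}(wt) = t(ad-bc-z^{\n+1}w)=0$; the first coprimality becomes $\gcd(at,ct,z)=\gcd(t,z)=1$; and substitution into the five bounds of the previous paragraph yields exactly $|a^\n wt^{\n+1}|,|c^\n wt^{\n+1}|,|z^\n wt|,|b|,|d|\le B$. Conversely, given a 6-tuple in $\Um$, set $t\coloneqq\pm\gcd(a,c)$, $a'\coloneqq a/t$, $c'\coloneqq c/t$; then $\gcd(a',c')=1$, and $\gcd(a,c,z)=1$ gives $\gcd(t,z)=1$. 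Reducing $ad-bc=z^{\n+1}w$ modulo $t$ and using $\gcd(t,z)=1$ forces $t\mid w$, so $w'\coloneqq w/t\in\Z$ and the equation becomes $a'd-bc'=z^{\n+1}w'$. The two sign choices for $t$ yield the two preimages $(a',b,c',d,z,w',t)$ and $(-a',b,-c',d,z,-w',-t)$, so $\Phi$ is exactly two-to-one.

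The one point requiring some care, and the only genuine obstacle, is the verification that $\gcd(b,d,wt)=1$ in the image: if a prime $p$ divided $b$, $d$, and $wt$, then $p\nmid w$ (by $\gcd(b,d,w)=1$) would force $p\mid t$, but then $p\mid b,d$ and the equation give $p\mid z^{\n+1}w$, whence $p\mid z$ contradicting $\gcd(z,t)=1$. Combining the two-to-one correspondence with the $1/4$ of Corollary~\ref{cor:countingproblem_i} then produces the factor $1/8$ in the statement.
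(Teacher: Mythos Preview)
Your proof is correct. The paper leaves the derivation of Corollary~\ref{cor:countingproblem} from Corollary~\ref{cor:countingproblem_i} implicit, and your two moves---reducing the height monomials to the five extremal ones via the weighted geometric mean, and introducing $t=\pm\gcd(a,c)$ together with the divisibility $t\mid w$ forced by the equation and $\gcd(t,z)=1$---are exactly the intended elementary argument; the paper supplements this in Remark~\ref{rem:blt} with the conceptual interpretation that the new counting problem is the torsor parameterization coming from a further blow-up $\shX_\n\to\stX_\n$, which explains structurally why the substitution $(a,c,w)\mapsto(at,ct,wt)$ works but is not needed for the verification.
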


\begin{remark}
  \label{rem:blt}
  Corollary~\ref{cor:countingproblem} can be interpreted as a version of
  Corollary~\ref{cor:countingproblem_i}, where instead of the desingularization
  $\stX_\n \to X_\n$, we use a further blow-up
  \begin{equation*}
    \hX_\n \to \stX_\n \to X_\n.
  \end{equation*}
  The colored fan of $\hX_\n$ is illustrated in the following picture
  (for $\n = 2$).
  \begin{align*}
    \begin{tikzpicture}[scale=0.7]
      \clip (-2.24, -2.24) -- (2.24, -2.24) -- (2.24, 2.24) -- (-2.24, 2.24) -- cycle;
      \fill[color=gray!30] (-3, 3) -- (3, -3) -- (-3, -3) -- cycle;
      \foreach \x in {-3,...,3} \foreach \y in {-3,...,3} \fill (\x, \y) circle (1pt);
      \draw (1, 0) circle (3pt);
      \draw (0, 1) circle (3pt);
      \draw (1, -1) circle (3pt);
      \draw (-1, -2) circle (3pt);
      \draw (0, -1) circle (3pt);
      \draw (0,0) -- (3,0);
      \draw (0,0) -- (3,-3);
      \draw (0,0) -- (0,3);
      \draw (0,0) -- (-2, -4);
      \draw (0,0) -- (0, -4);
      \path (-1, -2) node[anchor=west] {{\tiny{$u_z$}}};
      \path (0, -1) node[anchor=west] {{\tiny{$u_w$}}};
      \path (1, -1) node[anchor=west] {{\tiny{$u_t$}}};
      \begin{scope}
        \clip (0,0) -- (0,1) -- (-1,1) -- (-1, -2) -- cycle; \draw (0,0) circle (9pt);
      \end{scope}
      \begin{scope}
        \clip (0,0) -- (-1, -2) -- (0, -2) -- cycle; \draw (0,0) circle (13pt);
      \end{scope}
      \begin{scope}
        \clip (0,0) -- (0, -1) -- (1, -2) -- (1, -1) -- cycle; \draw (0,0) circle (9pt);
      \end{scope}
    \end{tikzpicture}
  \end{align*}
  According to Proposition~\ref{prop:cox1}, we have
  \begin{align*}
    \Rm(\stX_\n) = \Qd[a,b,c,d,z,w,t]/\langle ad-bc-z^{\n+1}w \rangle
  \end{align*}
  with $\Pic(\stX_\n) \cong \Z^3$ where $\deg(z) = (1,0,0)$,
  $\deg(w) = (0,1,0)$, $\deg(t) = (0,0,1)$, $\deg(a) = \deg(c) = (1,0,-1)$, and
  $\deg(b) = \deg(d) = (\n, 1, 1)$. Moreover, the quasi-affine variety
  \begin{align*}
    \Tm_\n &\coloneqq \Spec(\Rm(\tX_\n)) \setminus (\Vd(a,c) \cup \Vd(b,d,w) \cup \Vd(z,t))
  \end{align*}
  admits a torsor $\Tm_\n \to \stX_\n$ for the action of the torus
  $\Spec(\Qd{}[\Pic(\stX_\n)]) \cong \Gd_m^3$.
\end{remark}

\begin{remark}\label{rem:singular_locus_counting}
  On the singular locus $X_{\n,\sing} = \Vd(a,c,z) \cong \Pd^1_\Qd$ with
  coordinates $(b:d)$, the height $H$ is the $\frac{\n+2}{2\n}$-th power of the
  standard anticanonical height on $\Pd^1_\Qd$. Therefore, we have
  \begin{equation*}
    N_{X_{\n,\sing}, H}(B) = \frac{2}{\zeta(2)}B^{\frac{2\n}{\n+2}} + O(B^{\frac{\n}{\n+2}}\log B)\text{.}
  \end{equation*}
  In particular,
  \begin{equation*}
    N_{X_2, H}(B) = N_{U, H}(B) + O(B)\text{.}
  \end{equation*}
\end{remark}

We are now going to state the counting problem for $X'_\n$. We consider the open
subset
\begin{align*}
  U' \coloneqq \stX'_\n \setminus \Vd(yzwt) = X'_\n \setminus \Vd(yzt)\text{.}
\end{align*}

\begin{prop}
  There is a natural $16$-to-$1$ correspondence between
  \begin{align*}
    \Um' \coloneqq \left\{(a,b,c,d,y,z,t,w) \in \Zd^8 :
    \begin{aligned}
      &yztw \ne 0 ;\  ad-bc-y^\n z^{\n+1}w=0\\
      &\gcd(a,c)=\gcd(z,t)=\gcd(y,t)=1\\
      &\gcd(b,d,z)=\gcd(b,d,w)=\gcd(y,w)=1
    \end{aligned}
        \right\}
  \end{align*}
  and the set $U'(\Qd)$. Moreover, for $(a,b,c,d,y,z,t,w) \in \Um'$, we have
  \begin{equation*}
    H'(\pi(a:b:c:d:y:z:t:w)) = \max|\Ms'_{\n}(a,b,c,d,y,z,t,w)|^{1/(\n+1)}\text{.}
  \end{equation*}
\end{prop}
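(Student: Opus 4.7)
The plan is to adapt the argument of Proposition~\ref{prop:41} to the desingularization $\stX'_\n$, whose Picard group now has rank $4$. Since the fan of the toric ambient variety $\stY'_\n$ is regular, the construction of \cite[Section~8]{MR1679841} produces a toric scheme $\stYf'_\n$ over $\Spec(\Zd)$ together with an integral model
\begin{equation*}
  \mathfrak{W}'_\n \coloneqq \Spec(\Zd[a,b,c,d,y,z,t,w]) \setminus (\Vd(a,c)\cup\Vd(b,d,z)\cup\Vd(b,d,w)\cup\Vd(y,w)\cup\Vd(y,t)\cup\Vd(z,t))
\end{equation*}
of the universal torsor $\mathcal{W}'_\n \to \stY'_\n$; the six excluded coordinate subspaces are precisely the collections of Cox variables whose associated rays do not span a cone of the fan of $\stY'_\n$.

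First I would identify $\mathfrak{W}'_\n(\Zd)$ with the set of $8$-tuples $(a,b,c,d,y,z,t,w) \in \Zd^8$ satisfying the six coprimality conditions listed in $\Um'$. This is a prime-by-prime check: avoiding $\Vd(S) \bmod p$ for an excluded collection of coordinates $S$ is equivalent to the greatest common divisor of the coordinates in $S$ being coprime to $p$. Next I would consider the natural action of
\begin{equation*}
  \Spec(\Zd[\Pic(\stX'_\n)])(\Zd) \cong \Gd_m^4(\Zd) = \{\pm 1\}^4,
\end{equation*}
which has order $16$. After intersecting with the hypersurface $ad-bc = y^\n z^{\n+1}w$ and restricting to the locus where $yztw \ne 0$ (which cuts out $U'$ inside $\stX'_\n$), the four sign constraints imposed by the nonvanishing of $y,z,t,w$ force $\{\pm 1\}^4$ to act freely, yielding the claimed $16$-to-$1$ surjection onto $U'(\Qd)$.

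The remaining assertion, namely $H'(\pi(a:b:c:d:y:z:t:w)) = \max|\Ms'_\n(a,b,c,d,y,z,t,w)|^{1/(\n+1)}$ for $(a,\dots,w) \in \Um'$, reduces to proving that $\gcd \Ms'_\n(a,b,c,d,y,z,t,w) = 1$; this is the main technical point and the principal obstacle. The conceptual reason is that the common zero locus in $\Spec(\Zd[a,b,c,d,y,z,t,w])$ of all monomials of multidegree $L'$ coincides with the irrelevant locus of the Cox ring of $\stY'_\n$, so if every such monomial vanished modulo a prime $p$, the tuple would lie modulo $p$ in one of the six forbidden subspaces $\Vd(S)$, contradicting membership in $\Um'$. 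In practice, one verifies this directly from the explicit list of $13$ monomials in Remark~\ref{rem:monomials}, or, as noted in the proof of Proposition~\ref{prop:41}, by elementary manipulations of the defining equation $ad-bc = y^\n z^{\n+1}w$ combined with the six coprimality conditions.
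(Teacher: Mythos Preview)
Your proposal is correct and follows essentially the same approach as the paper, whose proof consists of the single sentence ``As Proposition~\ref{prop:41}.'' You have simply unpacked that reference: invoke Salberger's integral model of the toric universal torsor for the regular fan of $\stY'_\n$, identify the integral points via the six coprimality conditions, and quotient by $\Gd_m^4(\Zd)\cong\{\pm1\}^4$ to obtain the $16$-to-$1$ map, with the height identity following from the vanishing of the $\gcd$ term.
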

\begin{proof}
  As Proposition~\ref{prop:41}.
\end{proof}

\begin{cor}\label{cor:countingproblem_b}
  We have that $N_{U',H'}(B^{1/(\n+1)})$ is equal to
  \begin{align*}
    \frac{1}{16} \#\left\{(a,b,c,d,y,z,t,w) \in \Zd^8 :
    \begin{aligned}
      &yztw \ne 0;\ ad-bc-y^\n z^{\n+1}w=0\\
      &\gcd(a,c)=\gcd(z,t)=\gcd(y,t)=1\\
      &\gcd(b,d,z)=\gcd(b,d,w)=\gcd(y,w)=1\\
      &\max|\Ms'_\n(a,b,c,d,y,z,t,w)| \le B
    \end{aligned}
        \right\}\text{.}
  \end{align*}
\end{cor}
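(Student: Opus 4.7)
The statement is an immediate consequence of the preceding proposition, so the proof plan is essentially mechanical and the only potential subtlety is the correct bookkeeping of the height bound.

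The plan is as follows. By the proposition, the map $\Um' \to U'(\Qd)$ sending $(a,b,c,d,y,z,t,w)$ to $\pi(a:b:c:d:y:z:t:w)$ is a surjection whose fibers all have cardinality exactly $16$, corresponding to the action of $\Gd_m^4(\Zd)\cong\{\pm 1\}^4$ on integral points of the universal torsor $\Tm_\n'$. Therefore
\begin{equation*}
  N_{U',H'}(B^{1/(\n+1)}) = \frac{1}{16}\#\{(a,b,c,d,y,z,t,w)\in\Um' : H'(\pi(a:b:c:d:y:z:t:w)) \le B^{1/(\n+1)}\}.
\end{equation*}

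The proposition further asserts the clean height formula $H'(\pi(a:b:c:d:y:z:t:w)) = \max|\Ms'_{\n}(a,b,c,d,y,z,t,w)|^{1/(\n+1)}$ on $\Um'$, where on the torsor the $\gcd$ appearing in the general definition of $H'$ is forced to be $1$ by the coprimality conditions defining $\Um'$. Raising both sides to the $(\n+1)$-th power turns the inequality $H'(\pi(\cdots))\le B^{1/(\n+1)}$ into the single monomial bound $\max|\Ms'_\n(a,b,c,d,y,z,t,w)|\le B$.

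Substituting this equivalent bound into the counting expression and spelling out the defining conditions of $\Um'$ yields the displayed set on the right-hand side, completing the proof. There is no genuine obstacle: the work has already been done in establishing the proposition (the $16$-to-$1$ correspondence and the vanishing of the $\gcd$-denominator in the height on torsor integral points), and the corollary is just its specialization to a bounded height region.
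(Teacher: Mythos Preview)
Your proposal is correct and matches the paper's approach: the corollary is stated without proof in the paper, as it follows immediately from the preceding proposition by exactly the bookkeeping you describe.
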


\begin{remark}
  \label{rem:monomials}
  It is not difficult to see that we may assume that
  $\Ms'_{\n}(a,b,c,d,y,z,t,w)$ only contains the 13 monomials
  \begin{gather*}
    \pbox{\textwidth}
    { $\begin{aligned}
        &\{b, d\}^{\n+3}&\cdot&\{a, c\}^2&\cdot&y^2\text{,}\\
        &\{b,d\}^{\n+1}&\cdot&\{a,c\}^{2\n+2}&& && &\cdot& t^{2\n+2} &\cdot& w^2\text{,}\\
        &\{b, d\}^{\n+1}&&&\cdot&y^{2\n+2}&\cdot&z^{2\n+2} &&&\cdot& w^2\text{,}\\
        &&&\{a,c\}^{\n^2+2\n+1}&& &\cdot&z^{\n+1} &\cdot& t^{\n^2+3\n+2} &\cdot& w^{\n+3}\text{,}\\
        &&&&&y^{\n^2+2\n+1}&\cdot&z^{\n^2+3\n+2}&\cdot& t^{\n+1} &\cdot& w^{\n+3}\text{,}
      \end{aligned}$ }
\end{gather*}
where the notation $\{b, d\}$ resp.~$\{a, c\}$ means $b$ or $d$ resp.~$a$ or
$c$.
\end{remark}

\section{The expected formula for $X_2$}\label{sec:exp-x2}

The aim of this section is to determine the expected asymptotic formula for
$N_{U,H}(B)$ where
\begin{align*}
  U \coloneqq \tX_2 \setminus \Vd(w) = X_2 \setminus \Vd(a,c,z)\text{.}
\end{align*}
The resolution $\pi\colon \stX_2 \to X_2$ is crepant, hence the pullback of $H$
is an anticanonical height on $\stX_2$. According to
\cite[Conjecture~C']{MR1032922} and \cite[5.1]{MR2019019}, we have the predicted
asymptotic formula
\begin{align*}
  N_{U,H}(B) \sim \alpha\beta\tau B \log B\text{.}
\end{align*}
with
\begin{align*}
  \alpha = \rk \Pic(\tX_2) \cdot \vol\rleft\{t \in \Eff(\tX_2)^\vee: (t,-K_{\tX_2}) \le 1\rright\}
\end{align*}
where the volume is normalized such that $\Pic(\stX_2)^\vee$ has covolume $1$ in
$\Pic(\stX_2)^\vee_\Rd$. Under the identification $\Pic(\stX_2) \cong \Z^2$ from
Section~\ref{sec:geometry}, we have
\begin{align*}
  \alpha = 2\cdot\vol\rleft\{(t_1,t_2) \in \Rd_{\ge 0}^2: 4t_1+2t_2 \le 1\rright\} = \frac{1}{8}\text{.}
\end{align*}
The cohomological constant $\beta$ is
\begin{equation*}
  \beta = \#H^1(\Gal(\Qbar/\Qd),\Pic((\tX_2)_{\Qbar})) = 1
\end{equation*}
since $\tX_2$ is split. Finally, we determine the Tamagawa number $\tau$.
Consider the chart
\begin{align*}
  \Ad^3_\Qd \to \tX_2\text{,}&&
  (a,d,z) \mapsto (a: ad-z^3 : 1 : d :z : 1)\text{.}
\end{align*}
It follows from \cite[4.6]{MR2019019} and \cite[2.2.1]{MR1340296} that we have
\begin{align*}
  \tau = \omega_\infty \rleft(\prod_{p\text{ prime}} \lambda_p\omega_p\rright)
\end{align*}
with $\lambda_p = (1-p^{-1})^2$ and
\begin{align*}
  \omega_\nu &\coloneqq \iiint_{\Qd_\nu^3} \frac{1}{\max|\Ms_2(a,ad-z^3,1,d,z,1)|_\nu^2} \,\df a\,\df d\, \df z\text{,}
\end{align*}
for $\nu = p$ and $\nu = \infty$, where we have used the isomorphism
\begin{align*}
  \omega_{\tX_2} \cong \Om_{\tX_2}(-4, -2)
\end{align*}
identifying the section $\df a \wedge \df b \wedge \df z$ from the chart with the
section $1/c^4w^2$ from the Cox ring. Note that $\omega_p$ and $\omega_\infty$
(but not the product $\tau$) depend on the choice of such an isomorphism.

\begin{lemma}\label{lemma:pt}
  We have
  \begin{align*}
    \prod_{p\text{ prime}}\lambda_p\omega_p = \frac{1}{\zeta(2)\zeta(3)}\text{.}
  \end{align*}
\end{lemma}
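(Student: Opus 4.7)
The plan is to show that $\lambda_p\omega_p = (1-p^{-2})(1-p^{-3})$ for every prime $p$; the Euler product identity $\zeta(s) = \prod_p(1-p^{-s})^{-1}$ then yields the claim.

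First I would write out $\Ms_2$ explicitly. The monomials of degree $L = (2,1)$ in $\Qd[a,b,c,d,z,w]$ with the grading from Section~\ref{sec:geometry} are
\begin{equation*}
  \Ms_2 = \{b,\, d,\, a^2w,\, acw,\, azw,\, c^2w,\, czw,\, z^2w\},
\end{equation*}
and specializing to the chart $(a, ad-z^3, 1, d, z, 1)$ produces the eight values $\{ad-z^3,\, d,\, a^2,\, a,\, az,\, 1,\, z,\, z^2\}$. A single application of the ultrametric inequality collapses the six values not involving $d$:
\begin{equation*}
  \max\rleft(|a|_p^2,\, |a|_p,\, |az|_p,\, 1,\, |z|_p,\, |z|_p^2\rright) = \max(1, |a|_p, |z|_p)^2 =: N^2,
\end{equation*}
so the integrand reduces to $\max(|ad-z^3|_p, |d|_p, N^2)^{-2}$.

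Next, I would evaluate $\omega_p$ by partitioning $\Qd_p^3$ according to the valuations $\alpha = v_p(a)$, $\zeta = v_p(z)$, $\delta = v_p(d)$; the stratum $\{v_p(a)=\alpha,\, v_p(z)=\zeta,\, v_p(d)=\delta\}$ has measure $p^{-\alpha-\delta-\zeta}(1-p^{-1})^3$, and on it the denominator collapses to a single power of $p$ determined by $\min(\alpha+\delta, 3\zeta)$, $\delta$, and $2\min(0,\alpha,\zeta)$. Summing the resulting geometric series gives
\begin{equation*}
  \omega_p = \rleft(1+\tfrac{1}{p}\rright)\rleft(1+\tfrac{1}{p}+\tfrac{1}{p^2}\rright) = \frac{(1-p^{-2})(1-p^{-3})}{(1-p^{-1})^2}.
\end{equation*}
Combining with $\lambda_p = (1-p^{-1})^2$ produces $\lambda_p\omega_p = (1-p^{-2})(1-p^{-3})$, and taking the product over $p$ gives the claimed value $\zeta(2)^{-1}\zeta(3)^{-1}$.

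The delicate part will be the locus $\alpha+\delta = 3\zeta$, where the leading terms of $ad$ and $z^3$ coincide so that $|ad-z^3|_p$ can be arbitrarily small; this requires a further sub-stratification by the valuation of the unit difference $a_0 d_0 - z_0^3$, with $a = p^{\alpha}a_0$, $d = p^{\delta}d_0$, $z = p^{\zeta}z_0$, but the correction still fits into the clean factorization above. An alternative route that bypasses this bookkeeping is to use that $\tX_2$ is smooth with good reduction at every prime, so $\omega_p = p^{-3}\#\tX_2(\Fd_p)$; a short inclusion-exclusion on the torsor $\{ad-bc = z^3w\}\subseteq \Ad^6$ (subtracting the loci $\Vd(a,c,z)$ and $\Vd(b,d,w)$, then dividing by $|\Gd_m^2(\Fd_p)| = (p-1)^2$) yields $\#\tX_2(\Fd_p) = (p+1)(p^2+p+1)$, reproducing the same $\omega_p$.
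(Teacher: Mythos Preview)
Your proposal is correct and follows essentially the same two routes as the paper: a direct evaluation of the $p$-adic integral giving $\omega_p=(1+p^{-1})(1+p^{-1}+p^{-2})$, and the alternative via point counting on the smooth integral model. Your write-up of the first route (stratifying by $v_p(a),v_p(z),v_p(d)$ and flagging the collision locus $\alpha+\delta=3\zeta$) simply makes explicit what the paper summarizes as ``a direct calculation'', and your torsor inclusion--exclusion for $\#\tX_2(\Fd_p)=(p+1)(p^2+p+1)$ is the same count the paper obtains through its integral model $\tXf_2$.
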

\begin{proof}
  A direct calculation of the $p$-adic integral yields
  \begin{align*}
    \omega_p = \rleft(1+\frac{1}{p}\rright)\rleft(1+\frac{1}{p}+\frac{1}{p^2}\rright)\text{,} 
  \end{align*}
  from which the result follows. Alternatively, we may compute $\omega_p$ using
  an integral model of $\stX_2$. The toric variety $\stY_2$ comes from a regular
  fan, which can be used to construct a toric scheme $\stYf_2$ over
  $\Spec(\Zd)$, together with a map
  \begin{align*}
    \Spec(\Z[a,b,c,d,z,w]) \setminus (\Vd(a,c,z)\cup \Vd(b,d,w)) \to \stYf_2\text{,}
  \end{align*}
  which is a model for the universal torsor over
  \begin{align*}
    \stY_2 = \stYf_2 \times_{\Spec(\Zd)} \Spec(\Qd)\text{.}
  \end{align*}
  For details, we refer to \cite[Section~8]{MR1679841}. It can now be verified
  that the equation $ab-cd-z^{\n+1}w = 0$ defines a closed subscheme
  $\stXf_2 \hookrightarrow \stYf_2$, which is smooth and has integral fibers
  over $\Spec(\Zd)$ such that
  \begin{align*}
    \stX_2 = \stXf_2 \times_{\Spec(\Zd)} \Spec(\Qd)\text{.}
  \end{align*}
  We have the chart
  \begin{align*}
    \Ad^3_\Zd \to \tXf_2\text{,}&&
    (a,d,z) \mapsto \rleft(a: ad-z^3 : 1 : d :z : 1 \rright)
  \end{align*}
  and see that the isomorphism of line bundles
  \begin{align*}
    \omega_{\tXf_2} \cong \Om_{\tXf_2}(-4, -2)
  \end{align*}
  identifying $\df a \wedge \df b \wedge \df z$ and $1/c^4w^2$ can be defined
  over $\Spec(\Zd)$. Hence, according to \cite[Lemme~6.1]{pla}, we have
  \begin{equation*}
    \omega_p = \frac{\#\tXf_2(\Fd_p)}{p^2} = \frac{(p+1)(p^2+p+1)}{p^2} =
    \rleft(1+\frac{1}{p}\rright)\rleft(1+\frac{1}{p}+\frac{1}{p^2}\rright)\text{.}
    \qedhere
  \end{equation*}
\end{proof}

Finally, we compute the real density.

\begin{lemma}\label{le:real_density}
  We have
  \begin{equation*}
    \omega_\infty = 2 \iiiint_{|a|,|c|,|z|,|(ad-z^3)/c|,|d| \le 1} \frac{1}{|c|} \,\df a\,\df c\,\df d\,\df z\text{.}
  \end{equation*}
\end{lemma}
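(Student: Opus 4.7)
The plan is to produce the integral defining $\omega_\infty$ from the right-hand side by introducing the extra variable $c$ through the action of the structure group $\Gd_m^2$ of the universal torsor $\Tm_2 \to \stX_2$. As a first step, I will rewrite the domain of integration on the right in intrinsic form: evaluating the eight monomials of $\Ms_2$ at $(a,b,c,d,z,w) = (a,(ad-z^3)/c,c,d,z,1)$ gives $a^2, ac, az, c^2, cz, z^2, (ad-z^3)/c, d$, of which the six quadratic ones are automatically bounded by $1$ whenever $|a|, |c|, |z| \le 1$. Hence the region of integration on the right is exactly $\{\max|\Ms_2(a,(ad-z^3)/c,c,d,z,1)| \le 1\}$.

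Next I will perform the substitution $a = c a_0$, $d = c^2 d_0$, $z = c z_0$ with $c$ fixed. This is precisely the action of $(\lambda,\mu) = (c,1) \in \Gd_m^2$ sending $(a_0, a_0 d_0 - z_0^3, 1, d_0, z_0, 1)$ to $(c a_0, c^2(a_0 d_0 - z_0^3), c, c^2 d_0, c z_0, 1)$; in particular $(ad-z^3)/c$ becomes $c^2(a_0 d_0 - z_0^3)$, as required for compatibility with the universal torsor equation $ad-bc-z^3w=0$. The Jacobian of $(a,d,z)$ with respect to $(a_0,d_0,z_0)$ is $c\cdot c^2 \cdot c = c^4 \ge 0$, so $\df a\,\df d\,\df z = c^4\,\df a_0\,\df d_0\,\df z_0$. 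Since every monomial in $\Ms_2$ has Cox-ring degree $L=(2,1)$, each is multiplied by $\lambda^2\mu = c^2$ under this action; writing
\begin{equation*}
M(a_0,d_0,z_0) \coloneqq \max|\Ms_2(a_0, a_0 d_0 - z_0^3, 1, d_0, z_0, 1)|,
\end{equation*}
the constraint $\max|\Ms_2| \le 1$ becomes $c^2 M \le 1$, i.e.~$|c| \le M^{-1/2}$.

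Finally, I will execute the $c$-integration by Tonelli's theorem. The right-hand side becomes
\begin{equation*}
2 \iiint_{\Rd^3} \left( \int_{|c| \le M(a_0,d_0,z_0)^{-1/2}} |c|^3 \,\df c \right) \df a_0\,\df d_0\,\df z_0,
\end{equation*}
in which the inner integral evaluates to $\tfrac{1}{2} M^{-2}$. What remains is $\iiint_{\Rd^3} M(a_0,d_0,z_0)^{-2}\,\df a_0\,\df d_0\,\df z_0$, which is $\omega_\infty$ by the definition of that quantity in the chart $(a,d,z) \mapsto (a:ad-z^3:1:d:z:1)$. I do not anticipate substantive obstacles: the Jacobian $c^4$ is automatically non-negative, so no sign issues arise, and the factor $|c|^3$ makes the inner integrand regular at $c=0$, so applicability of Tonelli's theorem is automatic by non-negativity of the integrand.
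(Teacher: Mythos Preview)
Your proof is correct and essentially the same computation as the paper's, carried out in the reverse direction. The paper starts from the defining triple integral, inserts the extra variable via the identity $\tfrac{1}{s}=\tfrac{1}{2}\int_{|c|\ge s}|c|^{-2}\,\df c$, and then performs the substitutions $c\mapsto 1/c^4$ and $(a,d,z)\mapsto (a/c,d/c^2,z/c)$; your argument inverts this by doing the torus-action substitution first and then integrating out $c$ directly via $\int_{|c|\le M^{-1/2}}|c|^3\,\df c=\tfrac{1}{2}M^{-2}$. Your framing in terms of the $\Gd_m^2$-action on the Cox coordinates is a nice conceptual explanation for why the substitution $(a,d,z)=(ca_0,c^2d_0,cz_0)$ makes every monomial in $\Ms_2$ scale by the same factor $c^2$, and it collapses the paper's three steps into two.
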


\begin{proof}
  We have
  \begin{align*}
    \omega_{\infty} = \iiint \frac{1}{\max\{|a^2|,|1|,|z^2|,|ad-z^3|,|d|\}^2}\,\df a\,\df d\, \df z\text{.}
  \end{align*}
  We introduce an additional integration over $c$ using the identity
  \begin{align*}
    \frac{1}{s} = \frac{1}{2} \int_{|c| \ge s} \frac{1}{|c^2|}\,\df c
  \end{align*}
  for $s \in \Rd_{>0}$ and obtain 
  \begin{align*}
    \omega_{\infty} = \frac{1}{2}\iiiint_{
    |a^2|,|1|,|z^2|,|ad-z^3|,|d| \le |c|^{1/2}}
    \frac{1}{|c^2|} \,\df a\,\df c\,\df d\,\df z\text{.}
  \end{align*}
  Now the transformation $c \mapsto \frac{1}{c^4}$ (with
  $\df c \mapsto \frac{4}{|c|^5} \, \df c$) yields
  \begin{align*}
    \omega_{\infty} = 2\iiiint_{
    |a^2c^2|,|c^2|,|z^2c^2|,|(ad-z^3)c^2|,|dc^2| \le 1}
    |c^3| \,\df a\,\df c\,\df d\,\df z\text{,}
  \end{align*}
  and, finally, the transformation
  $ (a,d,z) \mapsto (\frac{a}{c},\frac{d}{c^2},\frac{z}{c}) $ yields
  \begin{equation*}
    \omega_{\infty} = 2\iiiint_{
      |a^2|,|c^2|,|z^2|,|(ad-z^3)/c|,|d| \le 1}
    \frac{1}{|c|} \,\df a\,\df c\,\df d\,\df z\text{.}\qedhere
  \end{equation*}
\end{proof}

\section{The expected formula for $X_\n$ in the case $\n\ge 3$}
\label{sec:exp-ge-3}

The aim of this section is to determine, for $\n \ge 3$, the expected asymptotic
formula for $N_{U,H}(B)$, where
\begin{align*}
  U \coloneqq \tX_\n \setminus \Vd(w) = X_\n \setminus \Vd(a,c,z)
\end{align*}
and, moreover, to prove Theorem~\ref{thm:height_constant_intro}.

Recall from Section~\ref{sec:parameterization} that we consider
\begin{align*}
  L\coloneqq\tfrac{\n}{\n+2}\cdot \pi^*(-K_{X_\n}) = (\n,1) \in \Zd^2 \cong \Pic(\tX_\n)\text{,}
\end{align*}
and that the pullback of $H^{\n/(\n+2)}$ is a height relative to $L$. According
to \cite[Conjecture~C']{MR1032922} (see also \cite[3.6]{MR2019019}), the
predicted asymptotic formula is
\begin{align*}
  N_{U,H}(B) \sim \cfr B^\afr(\log B)^{\bfr-1}\text{,}
\end{align*}
where 
\begin{align*}
  \afr \coloneqq \tfrac{\n}{\n+2} \cdot \inf \rleft\{t\in\Rd : t \cdot L + K_{\tX_\n} \in \Pic(\tX_\n)
  \text{ is effective} \rright\} = \tfrac{2\n}{\n+2}
\end{align*}
and $\bfr=1$ is the codimension of the minimal face of the effective cone of
$\stX_\n$ containing $\smash{\afr \cdot L + K_{\tX_\n}}$.

Next, we compute the prediction of \cite{MR1679843} for $\cfr$. The divisor
\begin{align*}
  \tfrac{\n+2}{\n}\cdot \smash{\afr \cdot L + K_{\tX_\n}} = (\n-2,0)
\end{align*}
is not rigid, hence, according to \cite[Remark~2.4.4]{MR1679843}, we consider
the natural fibration
\begin{align*}
  \phi\colon \tX_\n \to P_\n \coloneqq \Proj\rleft(\bigoplus_{\nu\ge 0}\Gamma\rleft(\tX_\n, \Om_{\tX_\n}(\n-2, 0)^{\otimes \nu}\rright)\rright)\text{,}
\end{align*}
where we have an isomorphism $\Pd^2_\Qd \cong P_\n$ such that
\begin{align*}
  \phi \colon \tX_\n \to \Pd^2_\Qd\text{,}&&
  (a:b:c:d:z:w) \mapsto (a:c:z)\text{.}
\end{align*}
As we have $\fibphix \subseteq \Vd(w)$ if and only if $x \in \Vd(a, c)$, we only
consider points $x \in \Pd^2(\Qd) \setminus \Vd(a, c)$ and determine the
predicted asymptotic formula
\begin{align*}
  N_{\fibphix,\pi^*H}(B) \sim \cfr_xB^{\afr_x}(\log B)^{\bfr_x-1}
\end{align*}
for the fiber $\fibphix$. We have isomorphisms
\begin{align*}
  \Pd^1_\Qd &\to \fibphix\text{,}&
  (b:w) &\mapsto \rleft(a : b :c : \tfrac{bc+z^{\n+1}w}{a} :z: w \rright)\text{,} && \text{for $a \ne 0$,}\\
  \Pd^1_\Qd &\to \fibphix\text{,}&
  (d:w) &\mapsto \rleft(a : \tfrac{ad-z^{\n+1}w}{c} :c : d :z: w \rright)\text{,} && \text{for $c \ne 0$,}
\end{align*}
which depend on the choice of $a,c,z \in \Qd$ such that $x = (a:c:z)$. We now
see that $\smash{\pi^*H^{2\n/(\n+2)}}$ restricted to $\fibphix$ is an
anticanonical height on $\Pd^1_\Qd$, which means that the predicted asymptotic
formula is
\begin{align*}
  N_{\fibphix,\pi^*H}(B^{\frac{\n+2}{2\n}}) \sim \frac{1}{2}\omega_{\infty,x}\rleft(\prod_{p\text{ prime}}\lambda_p\omega_{p,x}\rright)B\text{,}
\end{align*}
where $\lambda_p = 1-p^{-1}$. Now, consider the charts
\begin{align*}
  \Ad^1_\Qd &\to \fibphix\text{,} & b &\mapsto \rleft(a : b :c : \tfrac{bc+z^{\n+1}}{a} :z: 1 \rright)\text{,} && \text{for $a \ne 0$,}\\
  \Ad^1_\Qd &\to \fibphix\text{,}&
  d &\mapsto \rleft(a : \tfrac{ad-z^{\n+1}}{c} :c : d :z: 1 \rright)\text{,} && \text{for $c \ne 0$.}
\end{align*}
According to \cite[2.2.1]{MR1340296}, we have
\begin{align*}
  \omega_{\nu,x} &\coloneqq
  \begin{dcases}
    \int_{\Qd_{\nu}} \frac{1}{|a| \max|\Ms_\n(a,b,c,(bc+z^{\n+1})/a,z,1)|_\nu^{2}}\,\df b & \text{for $a \ne 0$,}\\
    \int_{\Qd_{\nu}} \frac{1}{|c| \max|\Ms_\n(a,(ad-z^{\n+1})/c,c,d,z,1)|_\nu^{2}}\,\df d & \text{for $c\ne 0$}
  \end{dcases}
\end{align*}
for $\nu \coloneqq p$ and $\nu \coloneqq \infty$, where we have used the
isomorphism
\begin{align*}
  \omega_{\fibphix} \cong \Om_{\tX_\n}(-2L)|_{\fibphix}
\end{align*}
identifying the section $\df b$ from the first chart (resp.~the section $\df d$
from the second chart) with the section $a/w^2$ from the Cox ring (resp.~the
section $c/w^2$ from the Cox ring). Imposing the conditions $a,c,z \in \Zd$ and
$\gcd(a,c,z) =1$, the integrals $\omega_{\nu,x}$ only depend on
$x \in \Pd^2(\Qd) \setminus \Vd(a,c)$.

It follows that we have $\afr_x = \tfrac{2\n}{\n+2} = \afr$,
$\bfr_x = 1 = \bfr$, and
\begin{align*}
  \cfr_x = \frac{1}{2} \omega_{\infty,x}\prod_{p\text{ prime}}\lambda_p\omega_{p,x}\text{.}
\end{align*}
Summing over all the fibers, we obtain the expected constant
\begin{align*}
  \cfr = \sum_{x \in \Pd^2(\Qd)\setminus \Vd(a,c)} \cfr_x\text{.}
\end{align*}
in the asymptotic formula for $N_{U,H}(B)$. We show in
Corollary~\ref{cor:cx-finite} that this sum converges.

\begin{lemma}\label{lemma:pt_k}
  For every $x \in \Pd^2(\Qd)\setminus \Vd(a,c)$ we have
  \begin{align*}
    \prod_{p\text{ prime}}\lambda_p\omega_{p,x} = \frac{1}{\zeta(2)}\text{.}
  \end{align*}
\end{lemma}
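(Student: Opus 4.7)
The plan is to show that $\omega_{p,x} = 1 + p^{-1}$ for every prime $p$ and every $x \in \Pd^2(\Qd) \setminus \Vd(a,c)$; the lemma then follows at once from the Euler product
\begin{equation*}
\prod_{p\text{ prime}} \lambda_p \omega_{p,x} = \prod_{p\text{ prime}} \rleft(1-\tfrac{1}{p}\rright)\rleft(1+\tfrac{1}{p}\rright) = \prod_{p\text{ prime}} \rleft(1-\tfrac{1}{p^2}\rright) = \frac{1}{\zeta(2)}.
\end{equation*}

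First I would reduce to the case $a \ne 0$; the case $c \ne 0$ is handled symmetrically via the second chart. Using the coprimality $\gcd(a,c,z) = 1$, at each prime $p$ at least one of $|a|_p$, $|c|_p$, $|z|_p$ equals $1$, so the degree-$\n$ monomials $a^i c^j z^k$ (with $i+j+k = \n$) appearing in $\Ms_{\n}$ have maximum $p$-adic absolute value exactly $1$. Combined with $|w|_p = 1$, this simplifies the integrand to
\begin{equation*}
\frac{1}{|a|_p \cdot \max(1, |b|_p, |d(b)|_p)^2}, \qquad d(b) \coloneqq \frac{bc + z^{\n+1}}{a}.
\end{equation*}

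Next I would compute the resulting $p$-adic integral by decomposing $\Qd_p$ into the shells $\{b : v_p(b) = \mu\}$ and, within each shell, further partitioning by the value of $v_p(bc + z^{\n+1})$. Setting $\alpha \coloneqq v_p(a)$, $\gamma \coloneqq v_p(c)$, $\delta \coloneqq v_p(z)$ (subject to $\min(\alpha,\gamma,\delta) = 0$), the valuation $v_p(bc + z^{\n+1})$ takes the generic value $\min(\mu + \gamma,(\n+1)\delta)$, with an exceptional locus of positive measure arising precisely when $\mu + \gamma = (\n+1)\delta$. A direct case analysis over the finitely many shapes of the triple $(\alpha,\gamma,\delta)$ shows that in every case the total contribution equals $(1 + p^{-1})p^{-\alpha}$, yielding $\omega_{p,x} = p^\alpha \cdot (1 + p^{-1})p^{-\alpha} = 1 + p^{-1}$.

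The main obstacle is bookkeeping in the combinatorial case analysis, particularly in the exceptional locus where $bc + z^{\n+1}$ undergoes high-order $p$-adic cancellation: there the contributions form a geometric series in the valuation which telescopes so as to cancel all dependence on $x$. The uniform answer $1 + p^{-1}$ ultimately reflects the fact that the pullback of $H^{2\n/(\n+2)}$ to the fiber $\fibphix \cong \Pd^1_\Qd$ is an anticanonical height, whose local density at each finite place is the standard value $1 + p^{-1}$ for $\Pd^1$.
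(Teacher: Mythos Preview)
Your proposal is correct and follows exactly the same approach as the paper: compute $\omega_{p,x}=1+p^{-1}$ for every prime and then take the Euler product. The paper's proof simply asserts that a ``straightforward calculation of the $p$-adic integrals yields $\omega_{p,x}=1+\tfrac{1}{p}$'' without further detail, so your shell decomposition and case analysis are precisely the details the paper omits.
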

\begin{proof}
  A straightforward calculation of the $p$-adic integrals yields
  \begin{align*}
    \omega_{p,x} = 1+\frac{1}{p}\text{,} 
  \end{align*}
  from which the result follows.
\end{proof}

\begin{lemma}\label{lemma:omega_infty_x}
  We have
  \begin{align*}
    \omega_{\infty,x} = 
    \begin{dcases}
      \iint_{\substack{\subalign{
            |a^{\n}w|,|c^{\n}w|,|z^{\n}w| &\le 1\\
            |b|,|(bc+z^{\n+1}w)/a| &\le 1
          }}}  \frac{1}{|a|} \,\df b\,\df w & \text{for $a \ne 0$,}\\
      \iint_{\substack{\subalign{
            |a^{\n}w|,|c^{\n}w|,|z^{\n}w| &\le 1\\
            |(ad-z^{\n+1}w)/c|,|d| &\le 1 }}} \frac{1}{|c|} \,\df d\,\df w
      &\text{for $c \ne 0$.}
    \end{dcases}
  \end{align*}
\end{lemma}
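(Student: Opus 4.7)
The proof will proceed in close analogy with Lemma~\ref{le:real_density}, introducing an auxiliary integration variable that plays the role of the Cox coordinate $w$. I focus on the case $a \ne 0$; the case $c \ne 0$ is then handled by the symmetric argument, with $d$ playing the role of $b$ and the chart $d \mapsto (a : (ad-z^{\n+1})/c : c : d : z : 1)$ replacing the one for $a \ne 0$.

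First, I would observe that the monomials in $\Ms_\n$ of degree $L = (\n, 1)$, pulled back along the chart $b \mapsto (a : b : c : (bc+z^{\n+1})/a : z : 1)$, consist of $b$, $(bc+z^{\n+1})/a$, and the monomials $a^i c^j z^k$ with $i+j+k = \n$. Since $\max_{i+j+k=\n}|a^i c^j z^k| = \max(|a|,|c|,|z|)^{\n}$, the max reduces to the five quantities $|a^{\n}|, |c^{\n}|, |z^{\n}|, |b|, |(bc+z^{\n+1})/a|$; call this maximum $M(b)$. The definition of $\omega_{\infty,x}$ from the preceding section then reads
\[
    \omega_{\infty, x} = \int_{\Rd} \frac{\df b}{|a|\, M(b)^2}.
\]

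Next, I would introduce an auxiliary variable $w_0$ via the identity already used in the proof of Lemma~\ref{le:real_density}, namely $\frac{1}{s} = \frac{1}{2}\int_{|w_0|\ge s}\frac{\df w_0}{w_0^2}$, applied with $s = M(b)^2$. This gives
\[
    \omega_{\infty,x} = \frac{1}{2}\iint_{|w_0|\ge M(b)^2} \frac{\df b\,\df w_0}{|a|\, w_0^2},
\]
so that the condition $|w_0|\ge M(b)^2$ becomes equivalent to each of the five constituents of $M(b)$ being bounded by $|w_0|^{1/2}$.

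Finally, I would perform the simultaneous change of variables $w_0 = \mathrm{sgn}(w)/w^2$ (so that $|w_0|^{1/2} = 1/|w|$ and $|\df w_0|/w_0^2 = 2|w|\,\df w$ as a measure) and $b \mapsto b/w$ (so $\df b \mapsto \df b/|w|$). The Jacobian factors $|w|$ and $1/|w|$ cancel, and the factor $2$ cancels the initial $\tfrac{1}{2}$, leaving integrand $1/|a|$. Under these substitutions the five bounds transform into $|a^{\n}w|, |c^{\n}w|, |z^{\n}w|, |b|, |(bc+z^{\n+1}w)/a|\le 1$, which is exactly the set of conditions in the claim. The only mildly delicate point is tracking the sign ambiguity in $w_0 = \mathrm{sgn}(w)/w^2$; apart from this, the calculation is a direct imitation of the transformation used in Lemma~\ref{le:real_density} and presents no real obstacle.
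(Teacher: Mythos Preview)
Your proposal is correct and follows essentially the same approach as the paper: introduce the auxiliary variable $w$ via the identity $1/s = \tfrac{1}{2}\int_{|w|\ge s}|w|^{-2}\,\df w$, then apply the substitutions $w \mapsto 1/w^2$ and $b \mapsto b/w$ exactly as in Lemma~\ref{le:real_density}. The only difference is that you spell out the reduction of the monomial set to the five relevant quantities and track the Jacobian more explicitly, which the paper leaves implicit.
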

\begin{proof}
  We consider the case $a \ne 0$ (the case $c \ne 0$ is similar). In our
  expression for $\omega_{\nu,x}$ above, with $\nu = \infty$, we introduce an
  additional integration over $w$ as in Lemma~\ref{le:real_density} and obtain
\begin{align*}
  \omega_{\infty,x} = \frac{1}{2}\iint_{\substack{\subalign{
  |a^{\n}|,|c^{\n}|,|z^{\n}| &\le |w|^{1/2}\\
  |b|,|(bc+z^{\n+1})/a| &\le |w|^{1/2}
                          }}}  \frac{1}{|aw^2|} \,\df b\,\df w\text{.}
\end{align*}
Now the transformations $w \mapsto \frac{1}{w^2}$ and then
$ b \mapsto \frac{b}{w} $ give the result.
\end{proof}

The following result proves Theorem~\ref{thm:height_constant_intro}.

\begin{theorem}\label{thm:height_constant}
  Let $\overline{H}\colon \Pd^2(\Qd) \to \Rd_{>0}$ be a height relative to
  \begin{align*}
    \Om_{\Pd^2_{\Qd}}(-\n-1) \cong \Om_{P_\n}(-1) \otimes \omega_{P_\n}\text{.}
  \end{align*}
  There exist positive constants $c_1, c_2$ such that for every
  $x \in \Pd^2(\Qd) \setminus \Vd(a,c)$ we have
  \begin{align*}
    c_1 \overline{H}(x) \le \cfr_x \le c_2 \overline{H}(x)\text{.}
  \end{align*}
\end{theorem}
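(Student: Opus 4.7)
The plan is to combine Lemma~\ref{lemma:pt_k} with Lemma~\ref{lemma:omega_infty_x} to reduce the statement to a purely real-analytic estimate, and then to verify that estimate by a direct case analysis.

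By Lemma~\ref{lemma:pt_k}, $\cfr_x = \omega_{\infty,x}/(2\zeta(2))$, so the leading constant contains only the archimedean factor. Any two heights on $\Pd^2_{\Qd}$ relative to the same line bundle differ by a bounded multiplicative factor, which may be absorbed into $c_1$ and $c_2$; it therefore suffices to compare $\omega_{\infty,x}$ with the specific height $\max(|a|,|c|,|z|)^{-(\n+1)}$ evaluated on coprime integer coordinates $(a,c,z)$ of $x$. Setting $M \coloneqq \max(|a|,|c|,|z|)$, I will prove
\begin{equation*}
  \omega_{\infty,x} \asymp M^{-(\n+1)}
\end{equation*}
with implicit constants independent of $x \in \Pd^2(\Qd)\setminus \Vd(a,c)$.

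Using Lemma~\ref{lemma:omega_infty_x}, I have two integral formulas, one valid for $a\ne 0$ and one for $c\ne 0$; since they agree whenever both are defined and $(a,c)\ne(0,0)$ forces at least one to apply, I may assume by the symmetry of the two charts that $|a|\ge|c|$ and use the first formula. Split into two cases. Case~1: $|a|\ge|z|$, i.e.~$M=|a|$. Then the binding $w$-constraint among $|a^\n w|,|c^\n w|,|z^\n w|\le 1$ is $|a^\n w|\le 1$, so the integration region embeds in the box $|b|\le 1$, $|w|\le |a|^{-\n}$, yielding the upper bound $\omega_{\infty,x}\le 4|a|^{-(\n+1)}$. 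For the lower bound I restrict to $|b|\le 1/2$ and $|w|\le (2|a|^\n)^{-1}$; the triangle inequality and $|c|,|z|\le|a|$ give $|bc+z^{\n+1}w|\le |a|/2+|a|/2=|a|$, so this sub-box lies entirely inside the region and contributes an area of order $|a|^{-\n}$, hence $\omega_{\infty,x}\gg|a|^{-(\n+1)}$. Case~2: $|z|>|a|$, i.e.~$M=|z|$. Substituting $w''\coloneqq z^{\n+1}w$ absorbs the factor $z^{\n+1}$ in the third constraint and turns the binding condition $|z^\n w|\le 1$ into $|w''|\le M$; the weaker conditions $|a^\n w|,|c^\n w|\le 1$ are automatic since $|a|,|c|\le|z|$. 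After the substitution the region becomes $\{|b|\le 1,\,|w''|\le M,\,|bc+w''|\le|a|\}$ and the integrand is $1/(|a|M^{\n+1})$, so only the area of this planar region matters.

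The main work, and the only genuine obstacle, is Case~2: I need to show the above region has area $\asymp |a|$. The upper bound is clear, since for each fixed $b$ with $|b|\le 1$ the slice in $w''$ is an interval of length at most $2|a|$, giving total area at most $4|a|$. For the lower bound, observe that the sub-box $|b|\le 1/2$, $|w''|\le |a|/2$ satisfies $|bc+w''|\le |c|/2+|a|/2\le|a|$ (using $|c|\le|a|$) and is contained in $|w''|\le M$ (using $|a|\le M$); its area is $|a|$. Combining, the full integral in Case~2 is $\asymp |a|\cdot(|a|M^{\n+1})^{-1}=M^{-(\n+1)}$, completing the matching upper and lower bounds and proving the theorem.
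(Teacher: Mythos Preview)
Your proof is correct and follows essentially the same strategy as the paper: reduce to $\omega_{\infty,x}$ via Lemmas~\ref{lemma:pt_k} and~\ref{lemma:omega_infty_x}, then split into cases according to which of $|a|,|c|,|z|$ is largest and bound the integral above and below by explicit boxes. Your substitution $w''=z^{\n+1}w$ in Case~2 is a cosmetic simplification the paper does not make, and your lower-bound sub-box $|b|\le\tfrac12$ there is slightly cleaner than the paper's choice $|b|\le|a|/(2|c|)$ (which does not obviously respect the constraint $|b|\le 1$).
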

\begin{proof}
  Let $x\coloneqq (a:c:z) \in \Pd^2(\Qd)$ with integral coordinates and
  $\gcd(a,c,z) = 1$. Without loss of generality, we can define the height
  $\overline{H}$ as
  \begin{align*}
    \overline{H}(x) \coloneqq \frac{1}{\max\{|a|, |c|, |z|\}^{\n+1}}\text{.}
  \end{align*}
  First, assume $\max\{|a|,|c|,|z|\} = |a|$. Then we have
  \begin{align*}
    \omega_{\infty,x} &\le \iint_{|a^{\n}w|,|b| \le 1} \frac{1}{|a|} \,\df b\,\df w
                        = \int_{|a^{\n}w|\le 1} \frac{2}{|a|} \,\df w \ll \frac{1}{|a|^{\n+1}}\text{.}
  \end{align*}
  Moreover, the conditions $|a^{\n}w|,|b| \le \tfrac{1}{2}$ imply all the
  conditions on the integral $\omega_{\infty,x}$, so that we obtain
  \begin{align*}
    \omega_{\infty,x} &\ge \iint_{|a^{\n}w|,|b| \le \frac{1}{2}} \frac{1}{|a|} \,\df b\,\df w
                        = \frac{1}{|a|^{\n+1}}\text{.}
  \end{align*}
  The case $\max\{|a|,|c|,|z|\} = |c|$ is similar. It remains to consider
  $\max\{|a|,|c|,|z|\} = |z|$. Assume $|a| \ge |c|$. We have
  \begin{align*}
    \omega_{\infty,x} &\le \iint_{|b|,|(bc+z^{\n+1}w)/a|\le 1} \frac{1}{|a|} \,\df b\,\df w
                        = \int_{|b| \le 1} \frac{2|a|}{|az^{\n+1}|}\,\df b \ll \frac{1}{|z|^{\n+1}}\text{.}
  \end{align*}
  Moreover, the conditions $|b| \le \frac{|a|}{2|c|}$ and
  $|w| \le \frac{|a|}{2|z|^{\n+1}}$ imply all the conditions on the integral
  $\omega_{\infty,x}$, so that we obtain
  \begin{align*}
    \omega_{\infty,x} &\ge \iint_{|b| \le \frac{|a|}{2|c|},|w| \le \frac{|a|}{2|z|^{\n+1}}} \frac{1}{|a|} \,\df b\,\df w = \frac{|a|}{|cz^{\n+1}|} \ge \frac{1}{|z|^{\n+1}}\text{.}
  \end{align*}
  The case $|c| \ge |a|$ is similar. Together, we obtain $\overline{H}(x) \asymp \omega_{\infty,x} \asymp \cfr_x$.
\end{proof}

\begin{cor}
  \label{cor:cx-finite}
  We have
  \begin{align*}
    \sum_{\substack{x \in \Pd^2(\Qd)\setminus\Vd(a,c)}} \cfr_x < \infty\text{.}
  \end{align*}
\end{cor}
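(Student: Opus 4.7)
The plan is to deduce the corollary directly from Theorem~\ref{thm:height_constant} together with a crude lattice-point count on $\Pd^2(\Qd)$. First, I would invoke the upper bound $\cfr_x \le c_2\overline{H}(x)$ from that theorem, where, writing $x = (a:c:z)$ with coprime integral coordinates, one may take
\[
\overline{H}(x) = \frac{1}{\max\{|a|,|c|,|z|\}^{\n+1}}.
\]
This reduces the question to establishing convergence of $\sum_{x \in \Pd^2(\Qd)} \overline{H}(x)$.

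Next, I would parameterize $\Pd^2(\Qd) \setminus \Vd(a,c)$ by primitive integer triples $(a,c,z) \in \Zd^3$ with $(a,c) \ne (0,0)$, identified up to sign. A standard count shows that the number of such triples with $\max\{|a|,|c|,|z|\} = B$ is $O(B^2)$. Summing dyadically over $B$ therefore gives
\[
\sum_{x \in \Pd^2(\Qd)\setminus\Vd(a,c)} \overline{H}(x) \ll \sum_{B \ge 1} \frac{B^2}{B^{\n+1}} = \sum_{B \ge 1} \frac{1}{B^{\n-1}},
\]
and this series converges since $\n \ge 3$ by the standing hypothesis of this section.

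There is no genuine obstacle here: the substantive work has already been done in Theorem~\ref{thm:height_constant}, and the corollary is essentially a tautological consequence, the only point to verify being that the exponent $\n-1$ exceeds $1$, which is exactly the assumption $\n \ge 3$. Correspondingly I would keep the write-up to a few lines that cite Theorem~\ref{thm:height_constant} and then perform the dyadic bookkeeping above.
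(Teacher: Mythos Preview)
Your proposal is correct and follows exactly the same approach as the paper: invoke the upper bound from Theorem~\ref{thm:height_constant} to reduce to $\sum_{a,c,z} \max\{|a|,|c|,|z|\}^{-(\n+1)}$, and then observe that this converges since $\n \ge 3$. The paper's write-up is even terser than yours, compressing the lattice-point count into a single $\ll 1$.
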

\begin{proof}
  Since $\n \ge 3$, we have
  \begin{equation*}
    \sum_{\substack{x \in \Pd^2(\Qd)\setminus\Vd(a,c)}} \cfr_x \ll \sum_{a,c,z} \frac{1}{\max\{|a|,|c|,|z|\}^{\n+1}} \ll 1\text{.}
    \qedhere
  \end{equation*}
\end{proof}

\section{The expected formula for $X'_\n$ in the case $\n \ge 2$}
\label{sec:exp-b-ge-2}

The aim of this section is to determine, for $\n \ge 2$, the expected asymptotic
formula for $N_{U',H'}(B)$ where
\begin{align*}
  U' \coloneqq \tX'_\n \setminus \Vd(yztw) = X'_\n \setminus \Vd(yzt)
\end{align*}
and, moreover, to prove Theorem~\ref{thm:height_constant_intro_b}.

Recall from Section~\ref{sec:parameterization} that we consider
\begin{align*}
  L'\coloneqq (\n+1) \cdot \pi^*(-K_{X'_\n}) = (\n^2+3\n+2, \n^2+2\n+1, \n+1, \n+3)
\end{align*}
in $\Pic(\tX'_\n) \cong \Zd^4$, and that the pullback of $(H')^{\n+1}$ is a
height relative to $L'$. According to \cite[Conjecture~C']{MR1032922} (see also
\cite[3.6]{MR2019019}), the predicted asymptotic formula is
\begin{align*}
  N_{U',H'}(B) \sim \cfr B^\afr(\log B)^{\bfr-1}\text{,}
\end{align*}
where 
\begin{align*}
  \afr \coloneqq (\n+1) \cdot \inf \rleft\{t\in\Rd : t \cdot L' + K_{\tX'_\n} \in \Pic(\tX'_\n)
  \text{ is effective} \rright\} = \tfrac{2\n+2}{\n+3}
\end{align*}
and $\bfr=1$ is the codimension of the minimal face of the effective cone of
$\stX'_\n$ containing $\smash{\afr \cdot L' + K_{\tX'_\n}}$. A prediction for
$\cfr$ can be found in \cite{MR1679843}. The $\Qd$-divisor
\begin{align*}
  \tfrac{1}{\n+1}\cdot \smash{\afr \cdot L'_\n + K_{\tX'_\n}} = \tfrac{\n-1}{\n+3} \cdot (\n+2,\n+1,1,0)
\end{align*}
is not rigid, hence, according to \cite[Remark~2.4.4]{MR1679843}, we consider
the natural fibration
\begin{align*}
  \phi'\colon \tX'_\n \dasharrow P'_\n \coloneqq \Proj\rleft(\bigoplus_{\substack{\nu\ge 0\\\n+3 | \nu}}\Gamma\rleft(\tX'_\n, \Om_{\tX'_\n}(\n+2,\n+1,1,0)^{\otimes \frac{\n-1}{\n+3}\cdot \nu}\rright)\rright)\text{,}
\end{align*}
where we have an isomorphism $\Pd^2_\Qd \cong P'_\n$ such that $\phi'$ extends
to
\begin{align*}
  \phi' \colon \tX'_\n \to \Pd^2_\Qd\text{,}&&
  (a:b:c:d:y:z:t:w) \mapsto (\ca:\cc:\cy) \coloneqq (at:ct:yz)\text{.}
\end{align*}
As we have $\fibphibx \subseteq \Vd(yztw)$ if and only if
$x \in \Vd(\ca, \cc) \cup \Vd(\cy)$, we only consider points
$x \in \Pd^2(\Qd) \setminus (\Vd(\ca, \cc) \cup \Vd(\cy))$ and determine the
predicted asymptotic formula
\begin{align*}
  N_{\fibphibx,\pi^*H'}(B) \sim \cfr_xB^{\afr_x}(\log B)^{\bfr_x-1}
\end{align*}
for the fiber $\fibphibx$. We have isomorphisms
\begin{align*}
  \Pd^1_\Qd &\to \fibphibx\text{,}&
  (b:w) &\mapsto \rleft(\ca : b : \cc : \tfrac{b\cc+\cy^{\n}w}{\ca} : \cy :1:1: w \rright)\text{,} && \text{for $\ca \ne 0$,}\\
  \Pd^1_\Qd &\to \fibphibx\text{,}&
  (d:w) &\mapsto \rleft(\ca : \tfrac{\ca d-\cy^{\n}w}{\cc} : \cc : d : \cy: 1:1: w \rright)\text{,} && \text{for $\cc \ne 0$,}
\end{align*}
which depend on the choice of $\ca,\cc,\cy \in \Qd$ such that
$x = (\ca:\cc:\cy)$. We now see that $\smash{(\pi^*H')^{(2\n+2)/(\n+3)}}$
restricted to $\fibphibx$ is an anticanonical height on $\Pd^1_\Qd$, which means
that the predicted asymptotic formula is
\begin{align*}
  N_{\fibphibx,\pi^*H'}(B^{\frac{\n+3}{2\n+2}}) \sim \frac{1}{2}\omega_{\infty,x}\rleft(\prod_{p\text{ prime}}\lambda_p\omega_{p,x}\rright)B\text{,}
\end{align*}
where $\lambda_p = 1-p^{-1}$. Now, consider the charts
\begin{align*}
  \Ad^1_\Qd &\to \fibphibx\text{,} & b &\mapsto \rleft(\ca : b : \cc : \tfrac{b\cc+\cy^{\n}}{\ca} : \cy : 1: 1:1 \rright)\text{,} && \text{for $\ca \ne 0$,}\\
  \Ad^1_\Qd &\to \fibphibx\text{,}&
  d &\mapsto \rleft(\ca : \tfrac{\ca d-\cy^{\n}}{\cc} : \cc : d : \cy : 1 : 1 : 1\rright)\text{,} && \text{for $\cc \ne 0$.}
\end{align*}
According to \cite[2.2.1]{MR1340296},
we have
\begin{align*}
  \omega_{\nu,x} &\coloneqq
  \begin{dcases}
    \int_{\Qd_{\nu}} \frac{1}{|\ca|_\nu \max|\Ms'_\n(\ca,b,\cc,(b\cc+\cy^{\n})/\ca,\cy,1,1,1)|_\nu^{2/(\n+3)}}\,\df b & 
    \text{for $\ca \ne 0$,}\\
    \int_{\Qd_{\nu}} \frac{1}{|\cc|_\nu \max|\Ms'_\n(\ca,(\ca d-\cy^{\n})/\cc,\cc,d,\cy,1,1,1)|_\nu^{2/(\n+3)}}\,\df d &
    \text{for $\cc\ne 0$}
  \end{dcases}
\end{align*}
for $\nu \coloneqq p$ and $\nu \coloneqq \infty$, where we have used the isomorphism
\begin{align*}
  \omega_{\fibphibx}^{\otimes(\n+3)} \cong \Om_{\tX'_\n}(-2L'_\n)|_{\fibphibx}
\end{align*}
identifying the section $(\df b)^{\otimes(\n+3)}$ from the first chart
(resp.~the section $(\df d)^{\otimes(\n+3)}$ from the second chart) with the
section $a^{\n+3}/w^{2\n+6}$ from the Cox ring (resp.~the section
$c^{\n+3}/w^{2\n+6}$ from the Cox ring). Imposing the conditions
$\ca,\cc,\cy \in \Zd$ and $\gcd(\ca,\cc,\cy) =1 $, the integrals
$\omega_{\nu,x}$ only depend on
$x \in \Pd^2(\Qd) \setminus (\Vd(\ca,\cc) \cup \Vd(\cy))$.

It follows that we have $\afr_x = \tfrac{2\n}{\n+2} = \afr$,
$\bfr_x = 1 = \bfr$, and
\begin{align*}
  \cfr_x = \frac{1}{2} \omega_{\infty,x}\prod_{p\text{ prime}}\lambda_p\omega_{p,x}\text{.}
\end{align*}
Summing over all the fibers, we obtain the expected constant
\begin{align*}
  \cfr = \sum_{x \in \Pd^2(\Qd)\setminus (\Vd(\ca,\cc)\cup\Vd(\cy))} \cfr_x\text{.}
\end{align*}
in the asymptotic formula for $N_{U',H'}(B)$. We show in
Proposition~\ref{prop:cxb-finite} that $\cfr < \infty$.

\begin{lemma}\label{lemma:pt_k_prime}
  Let $\efr \coloneqq \tfrac{-\n+1}{\n+3}$. For every
  $x \in \Pd^2(\Qd)\setminus (\Vd(\ca,\cc) \cup \Vd(\cy))$ we have
  \begin{align*}
    \omega_{p,x} = \bigg(\rleft(1-\frac{1}{p}\rright)\cdot \frac{1-(p^\efr)^{\nu_p(\cy)+1}}{1-p^\efr}+\frac{1}{p}+\frac{(p^\efr)^{\nu_p(\cy)}}{p}\bigg)
    \cdot (p^{\efr+1})^{\min(\nu_p(\ca), \nu_p(\cc))}\text{.}
  \end{align*}
\end{lemma}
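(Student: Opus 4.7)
The plan is to evaluate $\omega_{p,x}$ directly from the first chart (the case $\cc\neq 0$ is symmetric under the involution $b\leftrightarrow d$, $\ca\leftrightarrow\cc$ of the set of $13$ monomials in Remark~\ref{rem:monomials}, and the target formula is manifestly symmetric in $\nu_p(\ca),\nu_p(\cc)$). Substituting $z=t=w=1$ and $y=\cy$ into these monomials and using $d=(b\cc+\cy^{\n})/\ca$, the maximum of their $p$-adic absolute values collapses to
\begin{align*}
  F(b) \coloneqq \max\bigl\{M^{\n+3} A^{2} Y^{2},\ M^{\n+1} A^{2\n+2},\ M^{\n+1} Y^{2\n+2},\ A^{(\n+1)^{2}},\ Y^{(\n+1)^{2}}\bigr\},
\end{align*}
where $M=\max(|b|_p,|d|_p)$, $A=\max(|\ca|_p,|\cc|_p)$, and $Y=|\cy|_p$. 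So the integrand depends on $b$ only through the valuations $\nu_p(b)$ and $\nu_p(d)$.

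Set $\alpha=\nu_p(\ca)$, $\gamma=\nu_p(\cc)$, $\psi=\nu_p(\cy)$, and $m=\min(\alpha,\gamma)$. The substitution $b=p^{m}b_1$ scales the five candidate expressions in $F$ uniformly and, together with the Haar measure factor, isolates the prefactor $(p^{\efr+1})^{m}$, reducing the computation to the case $m=0$. By the coprimality of $(\ca,\cc,\cy)$ one has $A=1$ after this reduction, and it remains to evaluate
\begin{align*}
  \int_{\Qd_p} \frac{db_1}{|\ca|_p\,F(b_1)^{2/(\n+3)}}
  \;=\;\sum_{k\in\Zd}\int_{\nu_p(b_1)=k}\frac{db_1}{|\ca|_p\,F(b_1)^{2/(\n+3)}},
\end{align*}
where each annulus has Haar measure $(1-1/p)\,p^{-k}$. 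On each annulus the valuation $\nu_p(d)=\nu_p(b_1\cc+\cy^{\n})-\alpha$ is pinned down by the ultrametric, with a non-generic branch where $\nu_p(b_1\cc)=\nu_p(\cy^{\n})=\n\psi$ allows extra cancellation.

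The case analysis proceeds by comparing $k$ with $\psi$ and identifying which of the five expressions in $F(b_1)$ attains the maximum. A generic interior range $0\le k\le\psi$ produces the integrand $(p^{\efr})^{k}$ per unit Haar measure, summing to the finite geometric expression $(1-1/p)\,(1-(p^{\efr})^{\psi+1})/(1-p^{\efr})$; the tail $k<0$ is a convergent geometric series resumming to $1/p$; and the non-generic cancellation branch associated with $k=\psi$ contributes the isolated term $(p^{\efr})^{\psi}/p$. Combining these with the prefactor $(p^{\efr+1})^{m}$ gives the stated formula. The main obstacle is the bookkeeping of the case analysis: determining which of the five candidates attains the maximum in $F(b_1)$ at each valuation level $k$ and treating the non-generic branch of $\nu_p(d)$ precisely enough to produce the middle term in the formula. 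A small secondary check is that the $\cc\neq 0$ chart reproduces the same expression, which follows from the $b\leftrightarrow d$, $\ca\leftrightarrow\cc$ symmetry of the $13$ monomials and of the final answer.
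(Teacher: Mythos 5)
Your route is the same one the paper takes (a direct, annulus-by-annulus evaluation of the $p$-adic integral after collapsing the $13$ monomials to the five expressions in $F$), and your $F$ and the final totals are consistent with the lemma; but two of the concrete steps you rely on are wrong as stated. First, the five candidate expressions do \emph{not} scale uniformly under your substitution. Writing $\ca=p^m\ca_1$, $\cc=p^m\cc_1$ with $m=\min(\nu_p(\ca),\nu_p(\cc))>0$, the substitution that interacts well with $d$ is $b=p^{-m}b_1$ (not $b=p^{m}b_1$): then $b\cc=b_1\cc_1$, so $d=p^{-m}d_1$ with $d_1=(b_1\cc_1+\cy^{\n})/\ca_1$, hence $M=p^{m}M_1$, $A=p^{-m}A_1$, and the five expressions pick up the \emph{different} factors $p^{m(\n+1)}$, $p^{-m(\n+1)}$, $p^{m(\n+1)}$, $p^{-m(\n+1)^2}$, $1$. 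The maximum does scale by the single factor $p^{m(\n+1)}$, but only because $m>0$ forces $\nu_p(\cy)=0$, so $Y=1$, $A_1=1$, and $M_1\ge 1$ (if $\nu_p(b_1)>0$ then $\nu_p(b_1\cc_1+\cy^{\n})=0$, so $\nu_p(d_1)\le 0$); one must make this dominance argument to extract $(p^{\efr+1})^m$, and with your direction of substitution $d$ is $(p^{2m}b_1\cc_1+\cy^{\n})/(p^m\ca_1)$, which is not related to the reduced datum at all, so the prefactor cannot be isolated as you describe (the Haar-measure factor also goes the other way).

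Second, the reduction only achieves $\min(\nu_p(\ca),\nu_p(\cc))=0$; the chart valuation $\alpha=\nu_p(\ca)$ can still be positive, and then the case analysis is not the one you sketch: the thresholds shift by $\alpha$, the cancellation locus $\nu_p(b_1\cc)=\n\nu_p(\cy)$ splits into infinitely many sub-branches over which one must sum, and one has to verify that the total is independent of $\alpha$ (or else pass to the $\cc$-chart, which requires the chart-independence $\df b/|\ca|_p=\df d/|\cc|_p$ on the fiber; invoking the symmetry ``of the final answer'' is circular). Moreover, even in the clean case $\alpha=0$, where your three-piece decomposition does match the true computation, your attribution of the third piece is wrong: there the cancellation branch is irrelevant, since on it $|d|_p\le|b|_p$ so $M=|b|_p$ regardless of cancellation, and the term $(p^{\efr})^{\nu_p(\cy)}/p$ comes from the annuli with $M>p^{\nu_p(\cy)}$, where the monomial $M^{\n+3}Y^2$ attains the maximum. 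Since the target formula was given, the fact that your three summands add up to it is not evidence that the bookkeeping goes through; as written, both the scaling reduction and the case analysis contain genuine gaps, although the overall strategy is reparable along the lines indicated above.
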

\begin{proof}
  A lengthy, but straightforward calculation of the $p$-adic integrals yields
  \begin{align*}
    \omega_{p,x} =
    \begin{dcases}
      1 + \rleft(1-\frac{1}{p}\rright)\sum_{j=1}^{\nu_p(\cy)-1} (p^{\efr})^j + (p^\efr)^{\nu_p(\cy)} & \text{for $\nu_p(\cy) > 0$,} \\
      \rleft(1+\frac{1}{p}\rright) \cdot (p^{\efr+1})^{\min(\nu_p(\ca), \nu_p(\cc))} & \text{otherwise,}
    \end{dcases}
  \end{align*}
  from which the result follows.
\end{proof}

\begin{lemma}\label{lemma:omega_infty_x_prime}
  We have
  \begin{align*}
    \omega_{\infty,x} = 
    \begin{dcases}
      \iint_{\max|\Ms'_\n(\ca,b,\cc,(b\cc+\cy^\n w)/\ca,\cy,1,1,w)| \le 1} \frac{1}{|\ca|} \,\df b\,\df w & \text{for $\ca \ne 0$,}\\
      \iint_{\max|\Ms'_\n(\ca,(\ca d -\cy^\n w)/\cc, \cc, d,\cy,1,1,w)| \le 1} \frac{1}{|\cc|} \,\df d\,\df w & \text{for $\cc \ne 0$.}
    \end{dcases}
  \end{align*}
\end{lemma}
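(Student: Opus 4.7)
The plan is to mimic the proof of Lemma~\ref{lemma:omega_infty_x}: introduce an auxiliary variable $w$ via a power identity, then apply the successive substitutions $w \mapsto 1/w$ and $b \mapsto b/w$ to rearrange the integral into the claimed form. I describe the case $\ca \ne 0$; the case $\cc \ne 0$ is entirely analogous, with $d$ in place of $b$.

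The first step applies the elementary identity
\begin{equation*}
  \frac{1}{s^{2/(\n+3)}} = \int_{|w|^{\n+3}\ge s}\frac{\df w}{|w|^3}\qquad(s > 0),
\end{equation*}
which follows at once from $\int_{|w|\ge T}|w|^{-3}\,\df w = T^{-2}$. Taking $s$ to be the inner maximum in the formula for $\omega_{\infty,x}$ given in Section~\ref{sec:exp-b-ge-2}, this converts $\omega_{\infty,x}$ into a double integral in $(b,w)$ with integrand $\tfrac{1}{|\ca||w|^3}$ and constraint $|w|^{\n+3} \ge \max\rleft|\Ms'_\n(\ca,b,\cc,(b\cc+\cy^\n)/\ca,\cy,1,1,1)\rright|$.

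The second step performs the substitution $w \mapsto 1/w$ (producing a Jacobian factor $1/|w|^2$), followed by $b \mapsto b/w$ (producing a further factor $1/|w|$). The integrand simplifies to $\tfrac{1}{|\ca|}$, and the constraint becomes
\begin{equation*}
  |w|^{\n+3}\,\max\rleft|\Ms'_\n\rleft(\ca,\tfrac{b}{w},\cc,\tfrac{b\cc+\cy^\n w}{\ca w},\cy,1,1,1\rright)\rright| \le 1,
\end{equation*}
where under $b \mapsto b/w$ the argument $(b\cc+\cy^\n)/\ca$ rewrites as $(b\cc/w+\cy^\n)/\ca = (b\cc + \cy^\n w)/(\ca w)$.

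The remaining, substantive point is to verify that for every monomial $m \in \Ms'_\n$ we have
\begin{equation*}
  w^{\n+3}\, m\rleft(\ca,\tfrac{b}{w},\cc,\tfrac{b\cc+\cy^\n w}{\ca w},\cy,1,1,1\rright) = m\rleft(\ca,b,\cc,\tfrac{b\cc+\cy^\n w}{\ca},\cy,1,1,w\rright),
\end{equation*}
after which taking absolute values turns the constraint above into the desired $\max|\Ms'_\n(\ca,b,\cc,(b\cc+\cy^\n w)/\ca,\cy,1,1,w)|\le 1$. This identity expresses the $\Pic(\stX'_\n)$-homogeneity of $m$ in the fourth coordinate direction: the one-parameter subgroup of $\Spec(\Qd[\Pic(\stX'_\n)])\cong\Gd_m^4$ dual to the fourth generator scales $(b,d,w)$ simultaneously by a common factor while fixing $a,c,y,z,t$, and it multiplies every monomial of degree $L'$ by $w^{\n+3}$ since the fourth coordinate of $L'$ equals $\n+3$. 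The main bookkeeping subtlety is the compatibility of this scaling with the relation $d = (b\cc + \cy^\n w)/\ca$ parameterizing the fiber, which is exactly what produces the insertion of $w$ inside the argument $\cy^\n w$; checking the identity on each of the thirteen monomial shapes in Remark~\ref{rem:monomials} completes the proof.
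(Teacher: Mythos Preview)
Your proof is correct and follows essentially the same route as the paper, which simply records ``completely analogous to Lemma~\ref{lemma:omega_infty_x}'' without further detail. Your choice of the identity $s^{-2/(\n+3)}=\int_{|w|^{\n+3}\ge s}|w|^{-3}\,\df w$ followed by $w\mapsto 1/w$ and $b\mapsto b/w$ is a clean adaptation of the two-step substitution in Lemma~\ref{lemma:omega_infty_x}, and the homogeneity argument (the fourth $\Pic(\stX'_\n)$-coordinate being $1$ for $b,d,w$ and $0$ for $a,c,y,z,t$, with $L'$ having fourth coordinate $\n+3$) correctly justifies the passage from $\Ms'_\n(\dots,1)$ to $\Ms'_\n(\dots,w)$.
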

\begin{proof}
  This is completely analogous to Lemma~\ref{lemma:omega_infty_x}.
\end{proof}

The following result proves Theorem~\ref{thm:height_constant_intro_b}.

\begin{theorem}\label{thm:height_constant_bar}
  Let $\overline{H}\colon \Pd^2(\Qd) \to \Rd_{>0}$ be a height relative to an
  arbitrary line bundle. Then there are $\epsilon > 0$ such that there does
  not exist an open subset $V \subseteq \Pd^2(\Qd)$ with positive constants
  $c_1, c_2$ such that for every $x \in V$ we have
  \begin{align*}
    c_1 \overline{H}(x)^{1-\epsilon} \le \cfr_x \le c_2 \overline{H}(x)^{1+\epsilon}\text{.}
  \end{align*}
\end{theorem}
\begin{proof}
  Let $x\coloneqq (\ca:\cc:\cy) \in \Pd^2(\Qd)$ with integral coordinates and
  $\gcd(\ca,\cc,\cy) = 1$. Without loss of generality, we can define the height
  $\overline{H}$ as $\overline{H}(x) \coloneqq \max\{|\ca|, |\cc|, |\cy|\}^{r}$ for
  some $r \in \Zd$. We define
  \begin{align*}
    \omega_{\infty,x}^- &\coloneqq \int_{\Rd} \frac{1}{|\ca| \max|\Ms'_\n(\ca,2b,\cc,0,\cy,1,1,1)|^{2/(\n+3)}}\,\df b\text{,}\\
    \omega_{\infty,x}^+ &\coloneqq \int_{\Rd} \frac{1}{|\ca| \max|\Ms'_\n(\ca,b,\cc,0,\cy,1,1,1)|^{2/(\n+3)}}\,\df b\text{.}
  \end{align*}
  Clearly $\omega_{\infty,x} \le \omega_{\infty,x}^+ =
  2\omega_{\infty,x}^-$. For $|\ca| \ge
  \max\{2,|\cc|,|\cy^n|\}$, 
  we have $\omega_{\infty,x}^- \le \omega_{\infty, x}$.  Now we choose $\ca_0,
  \cc_0, \cy_0 \in \Zd$ with $\gcd(\ca_0,\cc_0,\cy_0) = 1$ and
  $\max\{|\ca_0|,|\cc_0|,|\cy_0|\}=|\ca_0|$, and a prime $p$ with $p \nmid
  \ca_0,\cc_0,\cy_0$ such that
  \begin{align*}
    x_0(m) &\coloneqq (\ca_0p^m:\cc_0:\cy_0)\text{,}\\
    x'_0(m) &\coloneqq (\ca_0p^m:\cc_0p^m:\cy_0)
  \end{align*}
  lie in a given $V$ for all $m \gg 1$. Here and in the following, all
  implicit constants may depend on $\ca_0$, $\cc_0$, $\cy_0$, and $r$, but not
  on $m$. We have
  \begin{equation*}
    \overline{H}(x'_0(m)) = \overline{H}(x_0(m)) = (p^m|\ca_0|)^r\text{.}
  \end{equation*}
  For $m \gg 1$, we
  have
  \begin{align*}
    \cfr_{x_0(m)} &\asymp \omega_{\infty,x_0(m)} \asymp \omega_{\infty,x_0(m)}^- \asymp \omega_{\infty,x'_0(m)}^-\text{,}
  \end{align*}
  since $\omega_{\infty,x}^-$ does not depend on $\cc$ for $|\ca| \ge |\cc|$. We
  also have
  \begin{align*}
    \cfr_{x'_0(m)} &\asymp (p^m)^{\efr+1} \omega_{\infty,x'_0(m)} \asymp (p^m)^{\efr+1} \omega_{\infty,x'_0(m)}^- \asymp (p^m)^{\efr+1}\cfr_{x_0(m)}\text{.}
  \end{align*}
  
  Assume that, for all $m \ge 0$, we have
  \begin{equation*}
    \cfr_{x_0(m)} \gg \overline{H}(x_0(m))^{1-\epsilon},\qquad \cfr_{x'_0(m)}
    \ll \overline{H}(x'_0(m))^{1+\epsilon}.
  \end{equation*}
  Then 
  \begin{equation*}
    (p^m)^{\efr+1}(p^m|\ca_0|)^{r(1-\epsilon)}
    \ll (p^m)^{\efr+1}\cfr_{x_0(m)} \ll \cfr_{x'_0(m)} \ll
    (p^m|\ca_0|)^{r(1+\epsilon)}\text{,}
  \end{equation*}
  which implies
  \begin{equation*}
    (p^m)^{\efr+1-2r\epsilon} \ll |\ca_0|^{2r\epsilon}.
  \end{equation*}
  For $\efr+1-2r\epsilon > 0$ (i.e., $\epsilon < \frac{2}{r(n+3)}$) and $m \to
  \infty$, we arrive at a contradiction.
\end{proof}

\section{Estimating integral points on the universal torsor of
  $X_\n$}\label{sec:summations}

We are going to prove Theorem~\ref{thm:main-2_intro} by showing
\begin{equation*}
  N_{U,H}(B^2) = \cfr B^2 \log(B^2) + O(B^2)\text{,}
\end{equation*}
where $\cfr$ is as in Section~\ref{sec:exp-x2}, and we are going to prove
Theorem~\ref{thm:ge3_intro} by showing
\begin{equation*}
  N_{U,H}(B^{\frac{\n+2}{\n}}) = \cfr B^2 + O(B^{\frac{\n+2}{\n}})
\end{equation*}
for $\n \ge 3$, where $\cfr$ is as in Section~\ref{sec:exp-ge-3}.

We use \cite[Lemma~3.1]{der09} and \cite[Lemma~3.6]{df14} repeatedly to
approximate sums by integrals. Note that we have
\begin{align*}
  \max|\Ms_\n(a,b,c,d,z,w,t)| = \max\{|b|, |d|, |a^\n wt^{\n+1}|, |c^\n wt^{\n+1}|, |z^\n wt|\}\text{.}
\end{align*}
We define
\begin{align*}
  V_1(a,c,z,w; B) \coloneqq
  \begin{dcases}
    \int_{\substack{\subalign{|d|,|(ad-z^{\n+1}w)/c|&\le B}}} \frac{1}{|c|}\, \df d & \text{for $c\ne 0$,} \\
    \int_{\substack{\subalign{|b|,|(bc+z^{\n+1}w)/a|&\le B}}} \frac{1}{|a|}\, \df b & \text{for $a\ne 0$.} \
  \end{dcases}
\end{align*}
Note that for $ac\ne 0$ the two cases coincide. Moreover, we have
\begin{align*}
  V_1(a,c,z,w; B) \le \frac{B}{\max\{|a|,|c|\}}\text{.}
\end{align*}
We also define
\begin{align*}
  V_2(a,c,z; B) \coloneqq \int_{\substack{\subalign{
  1 \le |w|&\le B\\
  |a^{\n}w|&\le B\\
  |c^{\n}w|&\le B\\
  |z^{\n}w| &\le B\\
  }}} V_1(a,c,z,w; B) \,\df w\text{.}
\end{align*}

\begin{prop}
  \label{prop:mi1}
  For $\n \ge 2$, we have
  \begin{align*}
    N_{U,H}(B^{\frac{\n+2}{\n}}) 
    &=
      \frac{1}{4\zeta(2)} 
      \sum_{\substack{|a|,|c|,|z| \ge 0\\
    \mathclap{\gcd(a,c,z)=1}\\
    (a,c)\ne(0,0)
    }}
    V_2(a,c,z; B)
    +O(B^{\frac{\n+2}{\n}})
    \text{.}
  \end{align*}
\end{prop}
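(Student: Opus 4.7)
The plan is to reduce the 7-tuple count in Corollary~\ref{cor:countingproblem} to the claimed integral expression via three transformations: a lattice-point count for $(b,d)$, a M\"obius inversion on $\gcd(b,d,w)=1$, and a bijective reindexing from 7-tuples to 6-tuples. First, I fix $(a,c,z,w,t)$ with $\gcd(a,c)=\gcd(z,t)=1$, $wt\ne 0$, and all bounds involving $a,c,z,w,t$, and count the integer pairs $(b,d)\in\Z^2$ with $ad-bc=z^{\n+1}w$ and $|b|,|d|\le B$. Since $\gcd(a,c)=1$, these pairs lie on a rank-one coset of $\Z^2$, so Lemma~3.1 of \cite{der09} gives the count without coprimality as $V_1(a,c,z,w;B)+O(1)$. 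Applying M\"obius inversion on $\gcd(b,d,w)=1$ and using the scaling identity $V_1(a,c,z,w/k;B/k)=V_1(a,c,z,w;B)/k$ (from a linear change of variable in the integral defining $V_1$), the Dirichlet sum $\sum_{k\mid w}\mu(k)/k=\varphi(|w|)/|w|$ collapses, giving the count $V_1(a,c,z,w;B)\,\varphi(|w|)/|w|+O(\tau(|w|))$.

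Second, I sum over $w\ne 0$ within the $t$-dependent bounds. The partial-summation identity
\[
\sum_{w\ne 0}\frac{\varphi(|w|)}{|w|}\,f(w)=\frac{1}{\zeta(2)}\int f(w)\,\df w+\text{error},
\]
a consequence of $\sum_{n\le X}\varphi(n)/n=X/\zeta(2)+O(\log X)$, replaces the discrete sum over $w$ by $(1/\zeta(2))\int V_1(a,c,z,w;B)\,\df w$ over the same $t$-dependent region. Third, I pass to the 6-tuple parametrization via the 2-to-1 correspondence $(a,c,z,t)\leftrightarrow(a',c',z)$ with $a'=at$, $c'=ct$ (from 7-tuples with $\gcd(a,c)=\gcd(z,t)=1$ onto triples with $\gcd(a',c',z)=1$ and $(a',c')\ne(0,0)$, the 2-to-1 factor coming from the sign of $t$); under this substitution, the $t$-dependent integration region on $w$ transforms into the $t$-independent region defining $V_2(a',c',z;B)$, and the integrand $V_1(a,c,z,w;B)\,\df w=V_1(a',c',z,tw;B)\,\df(tw)$ matches because the factor $|t|$ from the linear change of variable in $V_1$ cancels the $1/|t|$ Jacobian. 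Combining the prefactors $1/8$ (Corollary~\ref{cor:countingproblem}), $1/\zeta(2)$, and $2$ yields $1/(4\zeta(2))$ as claimed.

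The main obstacle will be controlling the cumulative error terms. The lattice-point approximation contributes $O(1)$, the M\"obius inversion contributes $O(\tau(|w|))$, and the replacement of $\sum_w$ by $\int_w$ contributes further errors; summed over the remaining variables $(a,c,z,t)$, these must telescope to $O(B^{(\n+2)/\n})$, which is the correct order to be absorbed into the main term (of size $B^2\log B$ for $\n=2$ or $B^2$ for $\n\ge 3$). Uniform bounds of this strength will follow from absolute convergence of the Dirichlet tails, the weight $1/|c|$ in $V_1$, and the exclusion conditions $|w|\ge 1$ and $|t|\ge 1$ (coming from the irrelevant ideal in the universal torsor), which ensure that the relevant sums have no contribution from arbitrarily small variables.
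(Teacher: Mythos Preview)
Your overall strategy is sound and is a legitimate variant of the paper's argument; the paper keeps the M\"obius variable $\alpha$ as an outer sum and only collapses $\sum_\alpha \mu(\alpha)/\alpha^2 = 1/\zeta(2)$ at the end, whereas you collapse it immediately into $\varphi(|w|)/|w|$ and then use partial summation. Both routes can be made to work.

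However, there is a concrete error in your third step. After the partial summation you have, for each $(a,c,z,t)$, an integral $\int_{|w|\ge 1}V_1(a,c,z,w;B)\,\df w$ over the $t$-dependent height box. You then set $a'=at$, $c'=ct$ and change variable $w'=tw$. As you correctly observe, $V_1(a,c,z,w;B)\,\df w = V_1(a',c',z,w';B)\,\df w'$ and the height inequalities $|a^{\n}wt^{\n+1}|\le B$ etc.\ become the $t$-free conditions $|(a')^{\n}w'|\le B$, $|z^{\n}w'|\le B$. But the lower bound $|w|\ge 1$ becomes $|w'|\ge |t|$, not $|w'|\ge 1$. Since under the 2-to-1 reindexing one has $|t|=\gcd(a',c')$, the integral you actually obtain is
\[
\int_{|w'|\ge \gcd(a',c')} V_1(a',c',z,w';B)\,\df w',
\]
which differs from $V_2(a',c',z;B)$ by the contribution of the range $1\le |w'|<\gcd(a',c')$. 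Your assertion that ``the $t$-dependent integration region on $w$ transforms into the $t$-independent region defining $V_2$'' is therefore false whenever $\gcd(a',c')>1$.

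This discrepancy is not automatically negligible: the naive bound $V_1\le B/\max\{|a'|,|c'|\}$ together with the trivial length estimate $\gcd(a',c')-1$ only yields $O(B^{(\n+2)/\n}\log B)$ after summing over $(a',c',z)$, which for $\n=2$ is the same size as the main term. One must instead interpolate the bound $|w'|<\gcd(a',c')$ against the height constraint $|(a')^{\n}w'|\le B$ (or $|z^{\n}w'|\le B$), exactly as the paper does when it replaces the condition $|w|\ge\alpha$ by $|w|\ge |t|^{-1}$. That estimate is the substantive analytic content of this part of the proof, and your proposal omits it entirely. Once you insert it, your route goes through; without it, the argument is incomplete precisely at the point where $\n=2$ is most delicate.
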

\begin{proof}
  By Corollary~\ref{cor:countingproblem}, we have
  \begin{align*}
    N_{U,H}(B^{\frac{\n+2}{\n}}) = \frac{1}{8}
    \sum_{\substack{|a|,|c|,|z| \ge 0\\
    |w|,|t|\ge 1\\
    \gcd(a,c)=\gcd(z,t)=1\\
    \mathclap{|a^{\n}wt^{\n+1}|,|c^{\n}wt^{\n+1}|,|z^{\n}wt| \le B}
    }}
    \#\rleft\{(b, d) \in \Zd^2 : \pbox{\textwidth}{$\begin{aligned}ad-bc &= z^{\n+1}w\\|b|, |d| &\le B\\\gcd(b,d,w)&=1\end{aligned}$}\rright\}\text{.}
  \end{align*}
  We apply a M\"obius inversion to the condition $\gcd(b,d,w)=1$ and obtain
  \begin{align*}
    N_{U,H}(B^{\frac{\n+2}{\n}}) &= \sum_{\ma \ge 1}\frac{\mu(\ma)}{8}
                                   \sum_{\substack{|a|,|c|,|z| \ge 0\\
    |w|,|t|\ge 1\\
    \gcd(a,c)=\gcd(z,t)=1\\
    \mathclap{|\ma a^{\n}wt^{\n+1}|,|\ma c^{\n}wt^{\n+1}|,|\ma z^{\n}wt| \le B}
    }}
    \#\rleft\{(b, d) \in \Zd^2 : 
    \pbox{\textwidth}{$\begin{aligned}ad-bc &= z^{\n+1}w\\ |b|, |d| &\le B/\ma\end{aligned}$}\rright\}\\
                                 &= \sum_{\ma \ge 1}\frac{\mu(\ma)}{8}
                                   \sum_{\substack{|a|,|c|,|z| \ge 0\\
    |w|,|t|\ge 1\\
    \gcd(a,c)=\gcd(z,t)=1\\
    \mathclap{|\ma a^{\n}wt^{\n+1}|,|\ma c^{\n}wt^{\n+1}|,|\ma z^{\n}wt| \le B}
    }}
    \rleft( V_1(a,c,z,w; B/\ma) + O(1)\rright)\\
                                 &= \sum_{\ma \ge 1}\frac{\mu(\ma)}{8}
                                   \sum_{\substack{|a|,|c|,|z| \ge 0\\
    |w|,|t|\ge 1\\
    \gcd(a,c)=\gcd(z,t)=1\\
    \mathclap{|\ma a^{\n}wt^{\n+1}|,|\ma c^{\n}wt^{\n+1}|,|\ma z^{\n}wt| \le B}
    }}
    \rleft(\frac{1}{\ma} V_1(a,c,z,\ma w; B) + O(1)\rright)\\
                                 &= \sum_{\ma \ge 1}\frac{\mu(\ma)}{8\ma}
                                   \sum_{\substack{|a|,|c|,|z| \ge 0\\
    |w|,|t|\ge 1\\
    \mathclap{\gcd(a,c)=\gcd(z,t)=1}\\
    \mathclap{|\ma a^{\n}wt^{\n+1}|,|\ma c^{\n}wt^{\n+1}|,|\ma z^{\n}wt| \le B}
    }}
    V_1(a,c,z,\ma w; B) + O(B^{\frac{\n+2}{\n}})
    \text{,}
  \end{align*}
  where the condition $\gcd(a,c)=1$ is used for the second equality, the
  transformation $\ma b \mapsto b$ or $\ma d \mapsto d$ is applied inside the
  integral $V_1$ for the third equality, and the fourth equality follows from
  the estimate
  \begin{align*}
    \sum_{\ma}\sum_{a,c,z,w,t} 1 &\ll 
                                   \sum_{\ma}\sum_{a,c,z,t} \frac{B}{\ma \max\{|a|,|c|\}^{\n}|t|^{\n+1}}
    \\&\ll 
        \sum_{\ma}\sum_{a,c,t} \frac{B^{(\n+1)/\n}}{\ma^{(\n+1)/\n}\max\{|a|,|c|\}^{\n}|t|^{\n+2/\n}}
    \\&\ll 
        \begin{cases}
          B^{(\n+1)/\n}\log B & \text{for $\n=2$}\\ B^{(\n+1)/\n} & \text{for $\n \ge 3$}
        \end{cases}
    \\&\ll B^{\frac{\n+2}{\n}}\text{.}
  \end{align*}
  Replacing the sum over $w$ by an integral, we obtain that
  $N_{U,H}(B^{\frac{\n+2}{\n}})$ is equal to
  \begin{align*}
    &\sum_{\ma \ge 1}\frac{\mu(\ma)}{8\ma} 
      \sum_{\substack{|a|,|c|,|z| \ge 0\\
    |t|\ge 1\\
    \mathclap{\gcd(a,c)=\gcd(z,t)=1}\\
    \subalign{|\ma a^{\n}t^{\n+1}|&\le B\\|\ma c^{\n}t^{\n+1}|&\le B\\|\ma z^{\n}t|&\le B}
                                                                                     }}
                                                                                     \Bigg(\int_{\substack{\subalign{
                                                                                     |w| &\ge 1\\
    |\ma a^{\n}wt^{\n+1}|&\le B\\
    |\ma c^{\n}wt^{\n+1}|&\le B\\
    |\ma z^{\n}wt| &\le B\\
    }}} V_1(a,c,z,\ma w; B)\,\df w   + O(\err_1)\Bigg) + O(B^{\frac{\n+2}{\n}})\\
    =& \sum_{\ma \ge 1}\frac{\mu(\ma)}{8\ma} 
       \sum_{\substack{|a|,|c|,|z| \ge 0\\
    |t|\ge 1\\
    \mathclap{\gcd(a,c)=\gcd(z,t)=1}\\
    \subalign{|\ma a^{\n}t^{\n+1}|&\le B\\|\ma c^{\n}t^{\n+1}|&\le B\\|\ma z^{\n}t|&\le B}
                                                                                     }}
                                                                                     \Bigg(\frac{1}{\ma}\int_{\substack{\subalign{
                                                                                     |w| &\ge \ma\\
    |a^{\n}wt^{\n+1}|&\le B\\
    |c^{\n}wt^{\n+1}|&\le B\\
    |z^{\n}wt| &\le B\\
    }}} V_1(a,c,z,w; B)\,\df w + O(\err_1)\Bigg) + O(B^{\frac{\n+2}{\n}})\\
    =& \sum_{\ma \ge 1}\frac{\mu(\ma)}{8\ma^2} 
       \sum_{\substack{|a|,|c|,|z| \ge 0\\
    |t|\ge 1\\
    \gcd(a,c)=\gcd(z,t)=1
    }}
    \int_{\substack{\subalign{
    |w|&\ge \ma\\
    |a^{\n}wt^{\n+1}|&\le B\\
    |c^{\n}wt^{\n+1}|&\le B\\
    |z^{\n}wt| &\le B\\
    }}} V_1(a,c,z,w; B) \,\df w + O(B^{\frac{\n+2}{\n}})\text{,}
  \end{align*}
  where we have applied the transformation $\ma w \mapsto w$
  for the first equality and the second equality follows from the estimates 
  \begin{align*}
    \err_1 = \max_w
    V_1(a,c,z,w; B)
    \le
    \frac{B}{\max\{|a|,|c|\}}
  \end{align*}
  and
  \begin{align*}
    \sum_\ma \frac{1}{\ma}\sum_{a,c,z,t} \frac{B}{\max\{|a|,|c|\}} &\ll
    \sum_\ma \frac{1}{\ma}\sum_{a,c,t} \frac{B^{(\n+1)/\n}}{\ma^{1/\n}\max\{|a|,|c|\}|t|^{1/\n}} 
    \\
    &\ll \sum_\ma \frac{1}{\ma}\sum_{t} \frac{B^{(\n+2)/\n}}{\ma^{2/\n}|t|^{(\n+2)/\n}}
    \ll B^\frac{\n+2}{\n}\text{.}
  \end{align*}
  Next, we replace the condition $\ma \le |w|$ by the condition
  $|t^{-1}| \le |w|$. For $0 < \epsilon < \frac{1}{\n}$, we have
  \begin{align*}
    &\sum_\ma \frac{1}{\ma^2}\sum_{a,c,z,t}
      \int_{\substack{\subalign{
      |t^{-1}| \le |w| &\le \ma\\
    |a^{\n}wt^{\n+1}|&\le B\\
    |c^{\n}wt^{\n+1}|&\le B\\
    |z^{\n}wt| &\le B
                 }}} V_1(a,c,z,w; B)\,\df w
    \\
    &\quad \ll  
      \sum_\ma \frac{1}{\ma^2}\sum_{a,c,z,t}
      \int_{\substack{\subalign{
      |t^{-1}| \le |w| &\le \ma\\
    |a^{\n}wt^{\n+1}|&\le B\\
    |c^{\n}wt^{\n+1}|&\le B\\
    |z^{\n}wt| &\le B
                 }}} \frac{B}{\max\{|a|,|c|\}} \,\df w
    \\
    &\quad \ll  
      \sum_\ma \frac{1}{\ma^2}\sum_{\substack{a,c,z,t\\z \ne 0\\\mathclap{|at|^{\n},
    |ct|^{\n},|z|^{\n} \le B}}}
    \frac{\ma^{1-\epsilon}B^{1+\epsilon}}{\max\{|a|,|c|\}|z|^{\epsilon \n}|t|^{\epsilon}}
    +
    \sum_\ma \frac{1}{\ma^2}\sum_{\substack{a,c,t\\z=0\\\mathclap{|at|^{\n},
    |ct|^\n \le B}}}
    \frac{\ma^{1-\epsilon}B^{1+\epsilon}}{\max\{|a|,|c|\}^{1+\epsilon \n}|t|^{\epsilon (\n+1)}}
    \\
    &\quad \ll  
      \sum_\ma \frac{1}{\ma^2}\sum_{\substack{a,c,t\\\mathclap{|at|^{\n},
    |ct|^{\n}\le B}}}
    \frac{\ma^{1-\epsilon}B^{(\n+1)/\n}}{\max\{|a|,|c|\}|t|^{\epsilon}}
    \ll  
    \sum_\ma \frac{1}{\ma^{1+\epsilon}}\sum_{t}
    \frac{B^{(\n+2)/\n}}{|t|^{1+\epsilon}}
    \ll B^{\frac{\n+2}{\n}}\text{,}
  \end{align*}
  so that we obtain that $N_{U,H}(B^{\frac{\n+2}{\n}})$ is equal to
  \begin{align*}
    &\frac{1}{8\zeta(2)} 
      \sum_{\substack{|a|,|c|,|z| \ge 0\\
    |t|\ge 1\\
    \gcd(a,c)=\gcd(z,t)=1
    }}
    \int_{\substack{\subalign{
    |wt|&\ge 1\\
    |a^{\n}wt^{\n+1}|&\le B\\
    |c^{\n}wt^{\n+1}|&\le B\\
    |z^{\n}wt| &\le B\\
    }}} V_1(a,c,z,w; B) \,\df w 
    +O(B^{\frac{\n+2}{\n}}) \\
    =&
       \frac{1}{8\zeta(2)} 
       \sum_{\substack{|a|,|c|,|z| \ge 0\\
    |t|\ge 1\\
    \gcd(a,c)=\gcd(z,t)=1
    }}
    \int_{\substack{\subalign{
    |w|&\ge 1\\
    |(at)^{\n}w|&\le B\\
    |(ct)^{\n}w|&\le B\\
    |z^{\n}w| &\le B\\
    }}} \frac{1}{|t|}V_1(a,c,z,w/t; B) \,\df w 
    +O(B^{\frac{\n+2}{\n}}) \\
    =&
       \frac{1}{8\zeta(2)} 
       \sum_{\substack{|a|,|c|,|z| \ge 0\\
    |t|\ge 1\\
    \gcd(a,c)=\gcd(z,t)=1
    }}
    \int_{\substack{\subalign{
    |w|&\ge 1\\
    |(at)^{\n}w|&\le B\\
    |(ct)^{\n}w|&\le B\\
    |z^{\n}w| &\le B\\
    }}} V_1(ta,tc,z,w; B) \,\df w 
    +O(B^{\frac{\n+2}{\n}}) \\
    =&
       \frac{1}{4\zeta(2)} 
       \sum_{\substack{|a|,|c|,|z| \ge 0\\
    \gcd(a,c,z)=1\\
    (a,c)\ne(0,0)
    }}
    \int_{\substack{\subalign{
    |w|&\ge 1\\
    |a^{\n}w|&\le B\\
    |c^{\n}w|&\le B\\
    |z^{\n}w| &\le B\\
    }}} V_1(a,c,z,w; B) \,\df w 
    +O(B^{\frac{\n+2}{\n}})\text{,}
  \end{align*}
  where the transformation $tw \mapsto w$ is applied for the first equality and
  the $2$-to-$1$ substitution $(ta, tc) \mapsto (a, c)$ is applied for the third
  equality. Finally, adding the condition $|w| \le B$ leaves the integral
  unchanged.
\end{proof}

We define
\begin{align*}
  V'_2(a,c,z) \coloneqq \int_{\substack{\subalign{
  |a^{\n}w|&\le 1\\
  |c^{\n}w|&\le 1\\
  |z^{\n}w| &\le 1\\
  }}} V_1(a,c,z,w; 1) \,\df w\text{.}
\end{align*}

\begin{cor}\label{cor:sum_omega_x}
  For $\n \ge 2$, we have
  \begin{align*}
    N_{U,H}(B^{\frac{\n+2}{\n}}) &= \frac{1}{4\zeta(2)}
    \sum_{\substack{|a|,|c|,|z| \le B^{1/\n}\\\gcd(a,c,z)=1\\(a,c) \ne
    (0,0)}}
    V'_2(a,c,z) B^2 + O(B^{\frac{\n+2}{\n}})\text{.}
  \end{align*}
\end{cor}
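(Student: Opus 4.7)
The plan is to pass from the expression in Proposition~\ref{prop:mi1} for $V_2(a,c,z;B)$ to $V'_2(a,c,z)\cdot B^2$ via a single rescaling of the $w$-variable, treating the truncated tail as an acceptable error. First, I observe that $V_2(a,c,z;B) = 0$ whenever $\max\{|a|,|c|,|z|\}^{\n} > B$, since the $w$-integration domain requires both $|w|\ge 1$ and $|a^{\n}w|, |c^{\n}w|, |z^{\n}w| \le B$. Hence the sum in Proposition~\ref{prop:mi1} is automatically supported on $|a|,|c|,|z|\le B^{1/\n}$, as in the corollary.

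Next, in each summand I would substitute $w = Bw'$ (so $\df w = B\,\df w'$) in the integral defining $V_2(a,c,z;B)$. The constraints $1\le|w|\le B$ become $1/B \le |w'|\le 1$, while $|a^{\n}w|,|c^{\n}w|,|z^{\n}w|\le B$ become $|a^{\n}w'|,|c^{\n}w'|,|z^{\n}w'|\le 1$. Combined with the elementary scaling identity $V_1(a,c,z,Bw';B) = B\cdot V_1(a,c,z,w';1)$ (obtained by a further change of variables $d\mapsto Bd$ or $b\mapsto Bb$ inside the definition of $V_1$), this yields
\begin{equation*}
V_2(a,c,z;B) = B^2 \int_{\substack{\subalign{1/B\le |w'|&\le 1\\|a^{\n}w'|,|c^{\n}w'|,|z^{\n}w'|&\le 1}}} V_1(a,c,z,w';1)\,\df w'.
\end{equation*}
Dropping the lower bound $|w'|\ge 1/B$ identifies the remaining integral as $V'_2(a,c,z)$, so that $V_2(a,c,z;B) = V'_2(a,c,z)\cdot B^2$ plus a tail contribution from $|w'|<1/B$.

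The tail is bounded by $B^2$ times the integral of $V_1(a,c,z,w';1)$ over $|w'|<1/B$. Using the trivial estimate $V_1(a,c,z,w';1)\le 2/\max\{|a|,|c|\}$, this is at most $4B/\max\{|a|,|c|\}$ per summand. Summing over $|a|,|c|,|z|\le B^{1/\n}$ with $(a,c)\ne(0,0)$: grouping the $(a,c)$-sum by $M = \max\{|a|,|c|\}$, each $M$ contributes $O(M)$ pairs of weight $1/M$ and hence $O(1)$, yielding $O(B^{1/\n})$ for the $(a,c)$-sum; the $z$-sum contributes another $O(B^{1/\n})$. The total error is therefore $O(B\cdot B^{2/\n}) = O(B^{(\n+2)/\n})$, which is absorbed into the claimed error term. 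The only mildly delicate point is the scaling identity for $V_1$, which cleanly converts the $B$-dependence of its last argument into an overall factor of $B$; otherwise the argument is a straightforward change of variables.
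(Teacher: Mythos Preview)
Your argument is correct and follows essentially the same route as the paper: restrict to $|a|,|c|,|z|\le B^{1/n}$ because $V_2$ vanishes otherwise, rescale $(b,w)$ or $(d,w)$ by $B$ to pull out the factor $B^2$, and remove the lower bound on $|w|$ at cost $O(B/\max\{|a|,|c|\})$ per summand, which sums to $O(B^{(n+2)/n})$. One small point you gloss over: after dropping $|w'|\ge 1/B$ you still carry the constraint $|w'|\le 1$, whereas $V'_2$ has no such constraint; the paper notes explicitly that this upper bound is redundant because $(a,c)\ne(0,0)$ are integers, so $\max\{|a|,|c|\}\ge 1$ forces $|w'|\le 1$ from $|a^n w'|,|c^n w'|\le 1$.
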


\begin{proof}
  In the formula from Proposition~\ref{prop:mi1}, we may restrict the sum to
  $|a|,|c|,|z| \le B^{1/\n}$ since $V_2(a,c,z;B)$ vanishes otherwise. The
  transformations $(b,w) \mapsto (Bb,Bw)$ for $a \ne 0$ and
  $(d,w)\mapsto (Bd,Bw)$ for $c \ne 0$ show that we have
  \begin{equation*}
    \int_{\substack{\subalign{
          |a^{\n}w|&\le B\\
          |c^{\n}w|&\le B\\
          |z^{\n}w| &\le B\\
        }}} V_1(a,c,z,w; B) \,\df w = V'_2(a,c,z)B^2\text{.}
  \end{equation*}
  Comparing the left side with $V_2(a,c,z;B)$, we see that the condition
  $|w| \le B$ in the definition of $V_2(a,c,z;B)$ follows from the other
  conditions, and it remains to remove the condition $|w| \ge 1$. The corollary
  now follows from the computation
  \begin{equation*}
    \sum_{|a|,|c|,|z| \le B^{1/\n}} \int_{|w| \le 1} V_1(a,c,z,w;B)\,\df w \ll
    \sum_{|a|,|c|,|z| \le B^{1/\n}} \frac{B}{\max\{|a|,|c|\}} \ll B^{\frac{\n+2}{\n}}\text{.}\qedhere
  \end{equation*}
\end{proof}

\begin{remark}\label{rem:order_of_magnitude}
  For $\n \ge 3$, we have $V'_2(a,c,z) = \omega_{\infty, (a:c:z)}$ 
  according to Lemma~\ref{lemma:omega_infty_x} and hence
  \begin{align*}
    V'_2(a,c,z) \asymp \frac{1}{\max\{|a|,|c|,|z|\}^{\n+1}}\text{,}
  \end{align*}
  but this also holds for $\n=2$.
  Theorem~\ref{thm:height_constant} and Corollary~\ref{cor:sum_omega_x} now yield
  \begin{equation*}
    N_{U,H}(B^{\frac{\n+2}{\n}}) \asymp \sum_{\substack{|a|,|c|,|z| \le B^{1/\n}\\\gcd(a,c,z)=1\\(a,c) \ne
        (0,0)}} \frac{B^2}{\max\{|a|,|c|,|z|\}^{\n+1}} \asymp
    \begin{cases}
      B^2 \log B\text{,} &\n=2\text{,}\\
      B^2\text{,} &\n \ge 3\text{.}
    \end{cases}
  \end{equation*}
  In the following, we turn these upper and lower bounds into asymptotic
  formulas.
\end{remark}

We begin with the case $\n \ge 3$.

\begin{theorem}\label{thm:ge3-final}
  For $\n \ge 3$, we have
  \begin{align*}
    N_{U,H}(B^{\frac{\n+2}{\n}}) = \cfr B^2 + O(B^{\frac{\n+2}{\n}})\text{.}
  \end{align*}
\end{theorem}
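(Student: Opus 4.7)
The plan is to start from Corollary~\ref{cor:sum_omega_x}, which already reduces the claim to analyzing the truncated sum
\[
\Sigma(B) := \frac{1}{4\zeta(2)} \sum_{\substack{|a|,|c|,|z| \le B^{1/\n}\\\gcd(a,c,z)=1\\(a,c) \ne (0,0)}} V'_2(a,c,z)\text{,}
\]
so that the theorem follows from showing $\Sigma(B) = \cfr + O(B^{(2-\n)/\n})$ and multiplying by $B^2$.

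The first step is the identification $V'_2(a,c,z) = \omega_{\infty,(a:c:z)}$ for coprime integer triples with $(a,c) \ne (0,0)$. This is immediate from Lemma~\ref{lemma:omega_infty_x}: unfolding $V_1$ inside $V'_2$ yields exactly the conditions appearing under the integral defining $\omega_{\infty,x}$ (as was essentially already noted in Remark~\ref{rem:order_of_magnitude}).

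The second step is a bookkeeping: each $x \in \Pd^2(\Qd) \setminus \Vd(a,c)$ has precisely two coprime integer representatives $\pm (a,c,z)$, so the untruncated sum equals
\[
\frac{1}{4\zeta(2)} \cdot 2 \sum_{x \in \Pd^2(\Qd) \setminus \Vd(a,c)} \omega_{\infty,x}
= \sum_x \frac{1}{2\zeta(2)}\omega_{\infty,x} = \sum_x \cfr_x = \cfr\text{,}
\]
by Lemma~\ref{lemma:pt_k} and the definition of $\cfr_x$ in Section~\ref{sec:exp-ge-3}.

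The third and only analytic step is the tail estimate. Using the upper bound $\omega_{\infty,(a:c:z)} \ll \max\{|a|,|c|,|z|\}^{-(\n+1)}$ supplied by Theorem~\ref{thm:height_constant}, and grouping the coprime triples by $R := \max\{|a|,|c|,|z|\}$ (which contributes $O(R^2)$ triples at each level), the tail beyond $R = B^{1/\n}$ is
\[
\sum_{R > B^{1/\n}} R^{2} \cdot R^{-(\n+1)} \ll \sum_{R > B^{1/\n}} R^{-(\n-1)} \ll B^{(2-\n)/\n}\text{,}
\]
so that after multiplication by $B^2$ the contribution to $N_{U,H}(B^{(\n+2)/\n})$ is $O(B^{(\n+2)/\n})$. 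Both the convergence of the sum over $R$ and the matching of the resulting exponent to the target error term rely on the assumption $\n \ge 3$.

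The main obstacle is this last tail estimate, and it is essentially painless here precisely because Theorem~\ref{thm:height_constant} was designed to supply the requisite sharp pointwise bound on $\omega_{\infty,x}$. Note that for $\n = 2$ the analogous tail sum diverges and contributes an additional $\log B$, which is entirely consistent with the $B \log B$ growth predicted by Peyre's formula in Theorem~\ref{thm:main-2_intro} rather than the pure linear growth in $B^{(\n+2)/\n}$ obtained here for $\n \ge 3$.
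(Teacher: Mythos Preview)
Your proof is correct and follows essentially the same approach as the paper: both start from Corollary~\ref{cor:sum_omega_x}, identify $V'_2(a,c,z)$ with $\omega_{\infty,(a:c:z)}$, use the pointwise bound from Theorem~\ref{thm:height_constant} to control the tail $\max\{|a|,|c|,|z|\} > B^{1/\n}$, and then convert the $2$-to-$1$ sum over coprime triples into a sum over $x \in \Pd^2(\Qd)\setminus \Vd(a,c)$. The only cosmetic difference is that you group the tail by the level $R=\max\{|a|,|c|,|z|\}$, whereas the paper orders $|a|\le|c|\le|z|$ and sums over $|z|$; the resulting estimates are identical.
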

\begin{proof}
  We remove the conditions $|a|,|c|,|z| \le B^{1/\n}$ from the sum in
  Corollary~\ref{cor:sum_omega_x} with a satisfactory error term. Since
  $\omega_{\infty,(a:c:z)} \ll \max\{|a|,|c|,|z|\}^{-\n-1}$ by
  Theorem~\ref{thm:height_constant}, we have
  \begin{equation*}
    \sum_{\substack{\max\{|a|,|c|,|z|\} > B^{1/\n}}}
    \omega_{\infty,(a:c:z)}B^2 \ll \sum_{\substack{|a|\le|c|\le|z|\\|z| >
        B^{1/\n}}} \frac{B^2}{|z|^{\n+1}} \ll \sum_{|z| >
      B^{1/\n}} \frac{B^2}{|z|^{\n-1}} \ll B^{\frac{\n+2}{\n}}\text{.}
  \end{equation*}
  It follows that we have
  \begin{align*}
    N_{U,H}(B^{\frac{\n+2}{\n}}) 
    &=
      \frac{1}{4\zeta(2)}\Bigg(\sum_{\substack{|a|,|c|,|z| \ge 0\\
    \mathclap{\gcd(a,c,z)=1}\\(a,c) \ne (0,0)}}
    \omega_{\infty,(a:c:z)}\Bigg) B^2 + O(B^{\frac{\n+2}{\n}})\\
    &=\Bigg(\sum_{\substack{x \in \Pd^2(\Qd) \setminus \Vd(a,c)
      }}
      \frac{\omega_{\infty,x}}{2\zeta(2)}\Bigg)B^2 +O(B^{\frac{\n+2}{\n}})\text{,}
  \end{align*}
  as predicted in Section~\ref{sec:exp-ge-3}.
\end{proof}

We now turn to the case $\n=2$. Note that we already have a result on the order
of magnitude in Remark~\ref{rem:order_of_magnitude} following from
Corollary~\ref{cor:sum_omega_x}. In order to obtain an asymptotic formula, we
resume our calculation from Proposition~\ref{prop:mi1}. We define
\begin{align*}
  V_3(B) \coloneqq \iiint V_2(a,c,z; B)\,\df a\,\df c\,\df z\text{.} 
\end{align*}

\begin{lemma}\label{lem:volume_real_density}
  For $\n=2$, we have
  \begin{align*}
    V_3(B) = \omega_\infty B^2\log B  \text{.}
  \end{align*}
\end{lemma}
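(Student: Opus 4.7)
The plan is to compute $V_3(B)$ by two successive changes of variables. The first will extract a factor of $B^2$ and normalize all integration ranges to be independent of $B$ except for a logarithmic window for $w$; the second will decouple $w$ from $(a,c,z)$, isolating the $\log B$ and leaving behind exactly the four-fold integral for $\omega_\infty/2$ computed in Lemma~\ref{le:real_density}.

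First, I will work with the $c \ne 0$ form of $V_1$ (the locus $c=0$ is negligible in the outer three-fold integration) and substitute $w = Bw'$, $d = Bd'$. The constraints $|d|,|w| \le B$, $|(ad - z^3w)/c| \le B$, and $|a^2w|,|c^2w|,|z^2w| \le B$ all lose their factor of $B$, and a Jacobian of $B^2$ comes out in front, yielding
\begin{equation*}
V_2(a,c,z;B) = B^2 \int_{\substack{1/B \le |w'| \le 1 \\ |a^2w'|,|c^2w'|,|z^2w'| \le 1}} \frac{1}{|c|} \int_{\substack{|d'| \le 1 \\ |(ad'-z^3w')/c| \le 1}} \df d' \,\df w'.
\end{equation*}

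Second, for each fixed $w'$ I will rescale $(a,c,z) = |w'|^{-1/2}(A,C,Z)$ in the outer triple integral. The Jacobian is $|w'|^{-3/2}$ and the factor $1/|c|$ becomes $|w'|^{1/2}/|C|$, so the combined weight is $1/(|C||w'|)$; the constraints $|a^2w'|,|c^2w'|,|z^2w'| \le 1$ become $|A|,|C|,|Z| \le 1$; and a short computation gives $(ad'-z^3w')/c = (Ad' - Z^3\operatorname{sign}(w'))/C$. Under the reflection $Z \mapsto -Z$ on $\{w' < 0\}$ the integrand and inner constraint become independent of $\operatorname{sign}(w')$, so the two sign components contribute equally, giving
\begin{equation*}
V_3(B) = 2B^2 \int_{1/B}^{1} \frac{\df w'}{w'} \cdot \iiiint_{\substack{|A|,|C|,|Z|,|d'| \le 1 \\ |(Ad'-Z^3)/C| \le 1}} \frac{1}{|C|}\,\df A\,\df C\,\df Z\,\df d'.
\end{equation*}

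The $w'$-integral equals $\log B$, and the remaining four-fold integral is exactly $\omega_\infty/2$ by Lemma~\ref{le:real_density}, so $V_3(B) = \omega_\infty B^2\log B$. The only delicate point will be the sign bookkeeping in the $(a,c,z)$-rescaling; once that is handled, the rest is mechanical tracking of Jacobians and constraints.
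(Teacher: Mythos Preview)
Your argument is correct and follows essentially the same route as the paper: extract $B^2$ by scaling, decouple $(a,c,z)$ from $w$ via the $|w|^{-1/2}$ rescaling, and read off $\log B$ from the remaining $w$-integral. The only differences are cosmetic: the paper performs the $(a,c,z)$- and $d$-scalings in one step (writing $(a,c,z)\mapsto (B/|w|)^{1/2}(a,c,z)$, $d\mapsto Bd$) and then converts $\int_{1\le|w|\le B}\frac{\df w}{|w|}$ to $2\log B$ via $|w|\mapsto B^{|w|}$, whereas you first normalize $w$ into $[1/B,1]$ and compute the logarithm directly; you are also more explicit about the $\operatorname{sign}(w)$ issue, which the paper handles silently by the same $z\mapsto -z$ symmetry.
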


\begin{proof}
  We have
  \begin{align*}
    V_3(B) = \idotsint_{\substack{
    \subalign{1\le|w|&\le B\\
    |a^2w|,|c^2w|,|z^2w| &\le B\\
    |(ad-z^3w)/c|,|d|&\le B}}}
                       \frac{1}{|c|}\, \df a\,\df c\,\df d\,\df z\,\df w\text{.} 
  \end{align*}
  Applying the transformations $(a,c,z) \mapsto B^{1/2}/|w|^{1/2}(a,c,z)$
  and $b \mapsto Bb$, we obtain
  \begin{align*}
    V_3(B) &= B^2 \idotsint_{\substack{
             \subalign{1\le|w|&\le B\\
    |a^2|,|c^2|,|z^2| &\le B\\
    |(ad-z^3)/c|,|d|&\le B}}}
                      \frac{1}{|cw|}\, \df a\,\df c\,\df d\,\df z\,\df w\\
           &= B^2 \int_{1\le|w|\le B} \frac{1}{|w|}\,\df w  \iiiint_{\substack{\subalign{|a^2|,|c^2|,|z^2|
                              &\le 1\\|(ad-z^3)/c|,|d|&\le 1}}} \frac{1}{|c|}\, \df a\,\df c\,\df d\,\df z
                                                        \text{.} 
  \end{align*}
  Now the transformation $|w| \mapsto B^{|w|}$ (with $\df w \mapsto B^{|w|} \log B\,\df w$) yields
  \begin{align*}
    V_3(B) 
    &= B^2 \log B \int_{0\le|w|\le 1} \,\df w \iiiint_{\substack{\subalign{|a^2|,|c^2|,|z^2|
    &\le 1\\|(ad-z^3)/c|,|d|&\le 1}}} \frac{1}{|c|}\, \df a\,\df c\, \df d\,\df z\\
    &= 2 B^2 \log B \iiiint_{\substack{\subalign{|a^2|,|c^2|,|z^2|
    &\le 1\\|(ad-z^3)/c|,|d|&\le 1}}} \frac{1}{|c|}\, \df a\,\df c\,\df d\,\df z\\
    &= \omega_\infty B^2 \log B
      \text{,} 
  \end{align*}
  where the last equality follows from Lemma~\ref{le:real_density}.
\end{proof}

\begin{theorem}\label{thm:final_summations}
  For $\n=2$, we have
  \begin{align*}
    N_{U,H}(B^2) = \cfr B^2\log(B^2) + O(B^2)\text{.}
  \end{align*}
\end{theorem}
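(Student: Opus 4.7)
Specializing Proposition~\ref{prop:mi1} to $\n=2$ already yields
\[
N_{U,H}(B^2) = \frac{1}{4\zeta(2)} \sum_{\substack{\gcd(a,c,z)=1\\(a,c)\ne(0,0)}} V_2(a,c,z;B) + O(B^2),
\]
so the theorem reduces to establishing
\[
\sum_{\substack{\gcd(a,c,z)=1\\(a,c)\ne(0,0)}} V_2(a,c,z;B) = \frac{V_3(B)}{\zeta(3)} + O(B^2).
\]
Indeed, Lemma~\ref{lem:volume_real_density} evaluates $V_3(B)=\omega_\infty B^2\log B$, and the value $\cfr = \omega_\infty/(8\zeta(2)\zeta(3))$ read off from Theorem~\ref{thm:main-2_intro} together with Lemma~\ref{le:real_density} then produces
\[
N_{U,H}(B^2) = \frac{\omega_\infty B^2\log B}{4\zeta(2)\zeta(3)} + O(B^2) = \cfr B^2 \log(B^2) + O(B^2)
\]
via $\log(B^2)=2\log B$.

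To establish the displayed estimate I would detach the coprimality condition by M\"obius inversion, rewriting the left-hand side as $\sum_{k\ge 1}\mu(k)\sum_{(a,c,z)\in\Zd^3}V_2(ka,kc,kz;B)$, and then approximate each inner triple sum by the corresponding triple integral using the lattice-point-to-integral comparison lemmas \cite[Lemma~3.1]{der09} and \cite[Lemma~3.6]{df14} already deployed throughout Section~\ref{sec:summations}. Scaling the integral by $k$ in each of the three coordinates shows that the $k$-th integral equals $V_3(B)/k^3$, so Dirichlet's identity $\sum_{k\ge 1}\mu(k)/k^3 = 1/\zeta(3)$ assembles these contributions into the predicted main term $V_3(B)/\zeta(3)$.

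The main technical obstacle is to verify that the cumulative sum-to-integral error is truly $O(B^2)$, a saving of only one logarithmic factor over the $B^2\log B$ main term. The key inputs are the pointwise bound $V_2(a,c,z;B) \ll B^2/\max\{|a|,|c|,|z|\}^3$ underlying Remark~\ref{rem:order_of_magnitude}, the automatic support constraint that $V_2(ka,kc,kz;B)$ vanishes unless $\max\{|a|,|c|,|z|\}\le B^{1/2}/k$ (so that the M\"obius $k$-sum is truncated at $k\asymp B^{1/2}$), and the piecewise-smooth behaviour of $V_2$ in each of its arguments, coming from the linear constraints in its definition. These ingredients, combined in the spirit of the error bookkeeping already carried out in the proof of Proposition~\ref{prop:mi1}, should produce a per-$k$ error that sums to the desired $O(B^2)$ bound, completing the proof.
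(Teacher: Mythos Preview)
Your proposal is correct and follows essentially the same approach as the paper: M\"obius inversion on $\gcd(a,c,z)=1$, sum-to-integral replacement one coordinate at a time (in the paper, in the order $z$, then $a$, then $c$), rescaling to extract $V_3(B)/k^3$, and summing $\mu(k)/k^3 = 1/\zeta(3)$. The paper carries out exactly the error bookkeeping you sketch, using the bounds $\int V_2(a,c,z;B)\,\df z \le \min\{B^2/\max\{|a|,|c|\}^2,\, B^2/\max\{|a|,|c|\}\}$ (which follow from your pointwise bound) and handling the constraint $(a,c)\ne(0,0)$ by dropping the $a=0$ or $c=0$ terms before passing to integrals over $|a|,|c|\ge 1$.
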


\begin{proof}
  According to Proposition~\ref{prop:mi1}, we have
  \begin{align*}
    N_{U,H}(B^2) 
    &=
      \frac{1}{4\zeta(2)} 
      \sum_{\substack{|a|,|c|,|z| \ge 0\\
    \mathclap{\gcd(a,c,z)=1}\\
    (a,c)\ne(0,0)
    }}
    V_2(a,c,z; B)
    +O(B^2)
    \text{.}
  \end{align*}
  We apply a M\"obius inversion to the condition $\gcd(a,c,z) = 1$ and replace
  the sum over $z$ by an integral to obtain
  \begin{align*}
    N_{U,H}(B^2) 
    &=
      \sum_{\ma\ge 1}\frac{\mu(\ma)}{4\zeta(2)} 
      \sum_{\substack{|a|,|c|,|z| \ge 0\\
    \mathclap{(a,c)\ne(0,0)}
    }}
    V_2(\ma a,\ma c,\ma z; B)
    +O(B^2)
    \\
    &=
      \sum_{\ma\ge 1}\frac{\mu(\ma)}{4\zeta(2)} 
      \sum_{\substack{|a|,|c| \ge 0\\
    \mathclap{(a,c)\ne(0,0)}}}
    \rleft(\int V_2(\ma a,\ma c,\ma z; B)\,\df z + O(\err_2) \rright) + O(B^2)
    \\
    &=
      \sum_{\ma\ge 1}\frac{\mu(\ma)}{4\zeta(2)\ma} 
      \sum_{\substack{|a|,|c| \ge 0\\
    \mathclap{(a,c)\ne(0,0)}}}
    \int V_2(\ma a,\ma c, z; B)\,\df z + O(B^2)
    \\
    &=
      \sum_{\ma\ge 1}\frac{\mu(\ma)}{4\zeta(2)\ma} 
      \sum_{|a|,|c| \ge 1}
      \int V_2(\ma a,\ma c, z; B)\,\df z + O(B^2)\text{,}
  \end{align*}
  where the third equality follows from the transformation $\ma z \mapsto z$ and
  the estimates
  \begin{align*}
    \err_2 \le \max_z \int_{\substack{\subalign{
    1 \le |w|&\le B\\
    |\ma^2 a^2w|&\le B\\
    |\ma^2 c^2w|&\le B\\
    |\ma^2 z^2w| &\le B\\
    }}} \frac{B}{\max\{|\ma a|,|\ma c|\}} \,\df w
    \ll
    \frac{B^2}{\ma^3\max\{|a|,|c|\}^3}
  \end{align*}
  and
  \begin{align*}
    \sum_{\ma,a,c} \frac{B^2}{\ma^3\max\{|a|,|c|\}^3} \ll B^2\text{.}
  \end{align*}
  Note that for every $a,c \in \Rd$ with $(a,c) \ne (0,0)$ we have
  \begin{align*}
    \int V_2(a,c,z; B) \,\df z &\le \iint_{\substack{\subalign{
                                 1 \le |w| &\le B\\
    |a^2w|&\le B\\
    |c^2w|&\le B\\
    |z^2w| &\le B
             }}} \frac{B}{\max\{|a|,|c|\}} \,\df w\,\df z \\
                               &\le \int_{\substack{\subalign{
                                 1 \le |w| &\le B\\
    |a^2w|&\le B\\
    |c^2w|&\le B
            }}} \frac{B^{3/2}}{\max\{|a|,|c|\}|w|^{1/2}} \,\df w\\
                               &\le \min\rleft\{\frac{B^2}{\max\{|a|,|c|\}^2},
                                 \frac{B^2}{\max\{|a|,|c|\}}
                                 \rright\}\text{,}
  \end{align*}
  which in particular implies the last equality above. We now successively
  replace the sums over $a$ and $c$ by integrals to obtain
  \begin{align*}
    N_{U,H}(B^2) 
    &= \sum_{\ma\ge 1}\frac{\mu(\ma)}{4\zeta(2)\ma}
      \sum_{|c|\ge 1}
      \rleft(\iint_{|a|\ge 1} V_2(\ma a,\ma c,z;B) \, \df a\, \df z + O(\err_3) \rright)+ O(B^2)\\
    &= \sum_{\ma\ge 1}\frac{\mu(\ma)}{4\zeta(2)\ma}
      \rleft( \iiint_{|a|,|c|\ge 1}
      V_2(\ma a,\ma c,z;B) \,\df a\,\df c\,\df z +O(\err_4)\rright) + O(B^2)
    \\
    &= \sum_{\ma\ge 1}\frac{\mu(\ma)}{4\zeta(2)\ma^3}
      \iiint_{|a|,|c|\ge \ma}
      V_2(a,c,z;B) \,\df a\,\df c\,\df z + O(B^2)
  \end{align*}
  since we have
  \begin{align*}
    \err_3 = \max_a \int V_2(\ma a,\ma c,z;B)\,\df z \ll \frac{B^2}{\ma^2|c|^2}
    &&\text{and}&&
                   \sum_{\ma} \frac{1}{\ma}\sum_{c} \frac{B^2}{\ma^2|c|^2} \ll B^2
  \end{align*}
  as well as
  \begin{align*}
    \err_4 = \max_c \iint_{|a|\ge 1} V_2(\ma a, \ma c, z;B)\,\df a\,\df z \ll \frac{B^2}{\ma^2}
    &&\text{and}&&
                   \sum_{\ma}\frac{1}{\ma}\cdot\frac{B^2}{\ma^2} \ll B^2
  \end{align*}
  and moreover the transformations $\ma a \mapsto a$ as well as
  $\ma b \mapsto b$ have been applied for the last equality. Finally, we can
  remove the conditions $|a|,|c| \ge \ma$ from the integral in order to obtain
  $V_3(B)$ since
  \begin{align*}
    &\sum_\ma \frac{1}{\ma^3} 
      \iiint_{|a|,|c| \le \ma} V_2(a,c,z;B) \,\df a\,\df c\,\df z \\
    &\qquad \ll \sum_\ma \frac{1}{\ma^3} 
      \iint_{|a|,|c| \le \ma} \frac{B^2}{\max\{|a|,|c|\}} \,\df a\,\df c\\
    &\qquad \ll \sum_\ma \frac{B^2}{\ma^2} \ll B^2\text{,}
  \end{align*}
  as well as
  \begin{align*}
    &\sum_\ma \frac{1}{\ma^3} 
      \iiint_{|a| \le \ma, |c| \ge \ma} V_2(a,c,z;B) \,\df a\,\df c\,\df z \\
    &\qquad \ll \sum_\ma \frac{1}{\ma^3} 
      \iint_{|a| \le \ma,|c| \ge \ma} \frac{B^2}{|c|^2} \,\df a\,\df c\\
    &\qquad \ll \sum_\ma \frac{1}{\ma^3} 
      \int_{|a| \le \ma} \frac{B^2}{\ma} \,\df a \ll \sum_\ma \frac{B^2}{\ma^3} \ll B^2\text{,}
  \end{align*}
  The case $|a|\ge \ma, |c|\le \ma$ is handled similarly. It follows that we
  obtain
  \begin{align*}
    N_{U,H}(B^2) 
    &=
      \frac{1}{4\zeta(2)} 
      V_3(B)
      +O(B^2)\\
    &=
      \frac{\omega_\infty}{4\zeta(2)\zeta(3)} B^2\log B + O(B^2)\\
    &=
      \frac{\omega_\infty}{8\zeta(2)\zeta(3)} B^2\log(B^2) + O(B^2)
      \text{,}
  \end{align*}
  by Lemma~\ref{lem:volume_real_density}, as predicted in
  Section~\ref{sec:exp-x2}.
\end{proof}

\section{Estimating integral points on the universal torsor of
  $X'_\n$}\label{sec:summations-b}

We are going to prove Theorem~\ref{thm:b_intro} by showing
\begin{align*}
  N_{U',H'}(B^{\frac{1}{\n+1}}) = \cfr B^{\frac{2}{\n+3}} + O_\epsilon(B^{\frac{1}{\n+1}+\epsilon})
\end{align*}
for $\n \ge 2$ and any $\epsilon>0$, where $\cfr$ is as in Section~\ref{sec:exp-b-ge-2}.

As in the preceding section, we repeatedly use \cite[Lemma~3.1]{der09} and
\cite[Lemma~3.6]{df14} to approximate sums by integrals. We define
\begin{equation*}
  \cond(b,d,w,a,c,y,z,t) \coloneqq \max |\Ms_\n'(a,b,c,d,y,z,t,w)|\text{.}
\end{equation*}
Moreover, we define
\begin{align*}
  V_{1,\lambda}(a,c,y,z,t,w; B) \coloneqq
  \begin{dcases}
    \int_{\cond(\lambda b, (\lambda bc+y^\n z^{\n+1}w)/a, ...) \le B} \frac{1}{|a|}\, \df b & \text{for $a\ne 0$,} \\
    \int_{\cond((\lambda ad-y^\n z^{\n+1}w)/c, \lambda d, ...) \le B}
    \frac{1}{|c|}\, \df d & \text{for $c\ne 0$.}
  \end{dcases}
\end{align*}
Note that for $ac\ne 0$ the two cases coincide and that we have
\begin{align*}
  V_{1,1}(a,c,y,z,t,w; B) \ll \frac{B^{1/(\n+3)}}{\max\{|a|,|c|\}^{(\n+5)/(\n+3)}|y|^{2/(\n+3)}}\text{.}
\end{align*}
We also define
\begin{align*}
  V_2(a,c,y,z,t; B) \coloneqq \int
  V_{1,1}(a,c,y,z,t,w; B) \,\df w\text{.}
\end{align*}

\begin{theorem}
  \label{thm:bpc}
  Let $\n \ge 2$. For any $\epsilon > 0$, we have
  \begin{align*}
    N_{U',H'}(B^{\frac{1}{\n+1}}) = \cfr B^{\frac{2}{\n+3}} + O_\epsilon(B^{\frac{1}{\n+1}+\epsilon})\text{.}
  \end{align*}
\end{theorem}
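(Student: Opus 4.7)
The plan is to follow the same general strategy as in Section~\ref{sec:summations}, but with three additional sources of difficulty: the counting region now involves $13$ monomials rather than $5$ (Remark~\ref{rem:monomials}); there are six coprimality conditions on $(a,b,c,d,y,z,t,w)$ instead of three; and, most importantly, the local densities $\omega_{p,x}$ now depend nontrivially on the base point~$x$, so the final sum over $x \in \Pd^2(\Qd) \setminus (\Vd(\ca,\cc) \cup \Vd(\cy))$ cannot be factored into a single Euler product times a sum over rational points.

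Starting from Corollary~\ref{cor:countingproblem_b}, I would first apply M\"obius inversion to the three coprimality conditions involving $(b,d,\cdot)$ (namely $\gcd(b,d,z) = \gcd(b,d,w) = 1$ and, combined with the equation $ad-bc = y^n z^{n+1} w$, also $\gcd(b,d,y) = 1$). For each fixed $(a,c,y,z,t,w)$ with $\gcd(a,c)=1$, the equation determines $b$ from $d$ (if $a \ne 0$) or $d$ from $b$ (if $c \ne 0$) up to a single free parameter, so the count of $(b,d)$ pairs is approximated by $V_{1,1}(a,c,y,z,t,w;B) + O(1)$ via \cite[Lemma~3.1]{der09}. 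The $O(1)$ error is absorbed by a crude bound using the weighted-projective height restriction from $\Ms'_\n$.

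Next, I would replace the sums over the ``torsor'' variables $t,w,z,y$ by integrals using \cite[Lemma~3.6]{df14}, in each step carefully estimating the ``boundary'' error using the upper bound on $V_{1,1}$ displayed before the theorem. The key bookkeeping observation is that, once these M\"obius-inverted sums are rewritten as integrals, the resulting archimedean integral should factor through the fibration $\phi'$: after the substitution analogous to the one in Proposition~\ref{prop:mi1} (using $\gcd(a,c)=1$, $\gcd(z,t)=1$, $\gcd(y,t)=1$, $\gcd(y,w)=1$ to split off the appropriate Dirichlet series), the sum over $(a,c,y,z,t,w)$ with fixed projective class $x = (at:ct:yz)$ should reorganize into $\omega_{\infty,x}$ (cf.~Lemma~\ref{lemma:omega_infty_x_prime}) multiplied by the local factor at each prime, yielding exactly the $p$-adic density $\omega_{p,x}$ computed in Lemma~\ref{lemma:pt_k_prime}. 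This step is where the $x$-dependence of $\omega_{p,x}$ emerges: each prime contributes different local factors depending on $\nu_p(\ca),\nu_p(\cc),\nu_p(\cy)$, and these assemble into the formula of Theorem~\ref{thm:b_intro}.

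The hardest part will be controlling the total error in a form uniform enough to yield the claimed $O_\epsilon(B^{1/(\n+1)+\epsilon})$ bound, given that $V_{1,1}$ has singularities along $\{a=c=0\}$ and $\{y=0\}$ (the loci excluded from $U'$). The $\epsilon$-loss reflects the need to control sums like $\sum_{d \mid n} 1$ or $\sum_{a,c,\ldots} 1/\max(\ldots)^{\n+5/(\n+3)}$ that do not converge absolutely at the boundary of the summation region; standard divisor-function bounds $\tau(n) \ll_\epsilon n^\epsilon$ and careful case distinction by which monomial in $\Ms'_\n$ dominates the height will be required. Once the leading term $\cfr B^{2/(\n+3)}$ is identified, convergence of $\cfr = \sum_x \cfr_x$ is handled separately in Proposition~\ref{prop:cxb-finite} (which, in contrast to Corollary~\ref{cor:cx-finite} for $X_\n$, cannot rely on Conjecture~\ref{conj:BT} since the latter fails by Theorem~\ref{thm:height_constant_bar}).
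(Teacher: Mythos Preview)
Your outline has a genuine gap at the step ``replace the sums over the `torsor' variables $t,w,z,y$ by integrals''. This is precisely what you must \emph{not} do. The $x$-dependence of $\omega_{p,x}$ in Lemma~\ref{lemma:pt_k_prime} is an arithmetic phenomenon encoded in the $p$-adic valuations $\nu_p(\ca),\nu_p(\cc),\nu_p(\cy)$; if you pass to archimedean integrals over $z,t,y$, that information is erased and there is no way to recover the factor $(p^{\efr+1})^{\min(\nu_p(\ca),\nu_p(\cc))}$ or the $\nu_p(\cy)$-dependent part from an integral. (A secondary slip: there is no condition $\gcd(b,d,y)=1$ in Corollary~\ref{cor:countingproblem_b}; only $\gcd(b,d,z)$ and $\gcd(b,d,w)$ appear.)

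What the paper does instead is integrate only over $w$ (and over $b$ or $d$ inside $V_{1,1}$), while keeping $a,c,y,z,t$ as genuine integer sums throughout. The M\"obius inversions are applied to $\gcd(b,d,z)=1$, $\gcd(b,d,w)=1$, and $\gcd(y,w)=1$ (variables $\ma,\mb,\mc$), but \emph{not} to $\gcd(a,c)=1$, $\gcd(z,t)=1$, $\gcd(y,t)=1$. These last three conditions are instead exploited via the integer substitution $(at,ct,yz,z,t)\mapsto(\ca,\cc,\cy,z,t)$, which is bijective onto tuples with $t\mid\gcd(\ca,\cc)$, $\gcd(\ca/t,\cc/t)=1$, $z\mid\cy$, $\gcd(\cy,t)=1$. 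After a rescaling of $(b,w)$ by powers of $zt$ inside the integral, the remaining discrete sum over $z,t$ (with $|t|=\gcd(\ca,\cc)$ and $z\mid\cy$) together with the M\"obius sums over $\ma\mid z$, $\mb$, $\mc\mid\cy/z$ assembles into exactly $\prod_p\lambda_p\omega_{p,x}$, while the archimedean integral becomes $\omega_{\infty,x}$. This discrete-to-arithmetic passage is the heart of the argument and cannot be replaced by the analogue of Proposition~\ref{prop:mi1}, where the sums over $a,c,z$ \emph{were} eventually turned into integrals only because $\omega_{p,x}$ there is independent of $x$.
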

\begin{proof}
  By Corollary~\ref{cor:countingproblem_b}, we have that
  $16\cdot N_{U',H'}(B^{\frac{1}{\n+1}})$ is equal to
  \begin{align*}
    \sum_{\substack{|a|,|c|\ge 0\\
    |y|,|z|,|t|,|w|\ge 1\\
    \gcd(a,c)=\gcd(y,w)=1\\
    \gcd(y,t)=\gcd(z,t)=1
    }}
    \#\rleft\{(b, d) \in \Zd^2 : \pbox{\textwidth}{$\begin{aligned}ad-bc &= y^\n z^{\n+1}w\\\cond(...) &\le B\\\gcd(b,d,z)&=\gcd(b,d,w)=1\end{aligned}$}\rright\}\text{.}
  \end{align*}
  We apply a M\"obius inversion to the conditions
  \begin{align*}
    \gcd(b,d,z) = \gcd(b,d,w) = 1\text{,}
  \end{align*}
  so that, after using the transformation
  $(b, d) \mapsto ([\ma, \mb] b, [\ma, \mb] d)$, we obtain that
  $16 \cdot N_{U',H'}(B^{\frac{1}{\n+1}})$ is equal to
  \begin{align*}
    &\sum_{\substack{|a|,|c| \ge 0\\
    |y|,|z|,|t|,|w|\ge 1\\
    \gcd(a,c)=\gcd(y,w)=1\\
    \gcd(y,t)=\gcd(z,t)=1\\
    }}
    \sum_{\substack{\ma,\mb>0\\\ma | z\\\mb | w}} \mu(\ma)\mu(\mb)
    \#\rleft\{(b, d) \in \Zd^2 : 
    \pbox{\textwidth}{$\begin{aligned}ad-bc = y^\n z^{\n+1}w/[\ma,\mb]\\ \cond([\ma,\mb] b, [\ma,\mb] d, ...) \le B\end{aligned}$}\rright\}\\
    =&
       \sum_{\substack{|a|,|c| \ge 0\\
    |y|,|z|,|t|,|w|\ge 1\\
    \gcd(a,c)=\gcd(y,w)=1\\
    \gcd(y,t)=\gcd(z,t)=1\\
    \mathclap{\cond(0,0,...) \le B}
    }}
    \sum_{\substack{\ma,\mb > 0\\\ma | z\\\mb | w}} \mu(\ma)\mu(\mb)
    \rleft( V_{1,[\ma,\mb]}(a,c,y,z,t,w; B) + O(1)\rright)\\
    =&
       \sum_{\substack{|a|,|c| \ge 0\\
    |y|,|z|,|w|,|t|\ge 1\\
    \gcd(a,c)=\gcd(y,w)=1\\
    \gcd(y,t)=\gcd(z,t)=1\\
    \mathclap{\cond(0,0,...) \le B}
    }}
    \sum_{\substack{\ma,\mb>0\\\ma | z\\ \mb | w}} \mu(\ma)\mu(\mb)
    \rleft( \frac{1}{[\ma,\mb]}V_{1,1}(a,c,y,z,t,w; B) + O(1)\rright)\\
    =&
       \sum_{\substack{|a|,|c| \ge 0\\
    |y|,|z|,|t|,|w|\ge 1\\
    \gcd(a,c)=\gcd(y,w)=1\\
    \gcd(y,t)=\gcd(z,t)=1\\
    }}
    \sum_{\substack{\ma,\mb>0\\\ma | z\\ \mb |w}} \frac{\mu(\ma)\mu(\mb)}{[\ma,\mb]}
    V_{1,1}(a,c,y,z,t,w; B) + O(B^{\frac{1}{\n+1}+\epsilon})
  \end{align*}
  for any $\epsilon > 0$, where we have used the fact that
  $\cond(0,0,\dots) \le B$ implies
  \begin{equation*}
    \max\{|a|,|c|\}^{\n^2+\n}|y|^{\n+1}|zt|^{2\n+2}|w|^{\n+3} \le B
  \end{equation*}
  to obtain the estimate
  \begin{align*}
    \sum_{a,c,y,z,t,w} \sum_{\substack{\ma | z\\\mb |w}} 1 &\ll 
    \sum_{a,c,y,z,t,w} 2^{\omega(z)+\omega(w)} \\
    & \ll
    \sum_{a,c,y,z,t} 2^{\omega(z)} \frac{B^{1/(\n+3)} \log B}{\max\{|a|,|c|\}^{(\n^2+\n)/(\n+3)}|y|^{(\n+1)/(\n+3)}|zt|^{(2\n+2)/(\n+3)}} \\
    & \ll \sum_{a,c} \frac{B^{1/(\n+1)} \log B}{\max\{|a|,|c|\}^{(\n^2+3\n)/(\n+3)}} 
    \ll B^{\frac{1}{\n+1}} (\log B)^2\text{.}
  \end{align*}
  
  We now apply a M\"obius inversion to the condition $\gcd(y,w)=1$, so that,
  after using the transformation $w \mapsto [\mb, \mc]w$, we obtain that
  $16 \cdot N_{U',H'}(B^{\frac{1}{\n+1}})$ is equal to
  \begin{align*}
    \sum_{\substack{|a|,|c| \ge 0\\
    |y|,|z|,|t|,|w|\ge 1\\
    \gcd(a,c)=\gcd(yz,t)=1\\
    }}
    \sum_{\substack{\ma,\mb,\mc>0\\\ma | z \\ \mc | y}} \frac{\mu(\ma)\mu(\mb)\mu(\mc)}{[\ma, \mb]}
    V_{1,1}(a,c,y,z,t,[\mb,\mc] w; B) + O(B^{\frac{1}{\n+1}+\epsilon})\text{.}
  \end{align*}
  Replacing the sum over $w$ by an integral, we obtain
  \begin{align*}
    \sum_{|w| \ge 1} V_{1,1}(a,c,y,z,t,[\mb,\mc] w; B)
    = \int_{|w|\ge 1}V_{1,1}(a,c,y,z,t, [\mb, \mc]w; B) \,\df w + O(R_1)\text{,}
  \end{align*}
  where
  \begin{align*}
    R_1 = \max_w V_{1,1}(a,c,y,z,t, [\mb, \mc]w; B) \ll \frac{B^{1/(\n+3)}}{\max\{|a|,|c|\}^{(\n+5)/(\n+3)}|y|^{2/(\n+3)}}\text{.}
  \end{align*}
  Using the fact that $\cond(0,0,[\mb, \mc]w,\dots) \le B$ and $|w| \ge 1$ imply
  \begin{gather*}
    \max\{|a|,|c|\}^{\n^2+2\n+1}|z|^{\n+1}|\mb|^{\n+3}|t|^{\n^2+3\n+2} \le B\text{,}\\
    |y|^{\n^2+2\n+1}|z|^{\n^2+3\n+2}|\mb|^{\n+3}|t|^{\n+1} \le B\text{,}
  \end{gather*}
  we obtain the estimate
  \begin{align*}
    \sum_{a,c,y,z,t} \sum_{\substack{\ma | z \\ \mb \\ \mc | y}}\frac{R_1}{[\ma, \mb]} &\ll
    \sum_{a,c,y,z,t} \sum_{\mb} \frac{2^{\omega(z)+\omega(y)}B^{1/(\n+3)}}{\mb\max\{|a|,|c|\}^{(\n+5)/(\n+3)}|y|^{2/(\n+3)}}\\
    & \ll \sum_{z,t} \sum_{\mb} \frac{2^{\omega(z)}B^{(1+2/(\n+1))/(\n+3)}\log B}{\mb^{(\n+3)/(\n+1)}|zt|}\\
    & \ll B^{\frac{1+2/(\n+1)}{\n+3}}(\log B)^4
  \end{align*}
  Hence we obtain that $16 \cdot N_{U',H'}(B^{\frac{1}{\n+1}})$ is equal to
  \begin{align*}
    \sum_{\substack{|a|,|c| \ge 0\\
    |y|,|z|,|t| \ge 1\\
    \gcd(a,c)=1\\\gcd(yz,t)=1\\
    }}
    \quad \sum_{\substack{\mathclap{\ma,\mb,\mc>0}\\ \ma | z \\ \mc | y}} \frac{\mu(\ma)\mu(\mb)\mu(\mc)}{[\ma, \mb][\mb, \mc]}
    \int_{|w|\ge [\mb, \mc]}V_{1,1}(a,c,y,z,t, w; B) \,\df w + O(B^{\frac{1}{\n+1}+\epsilon})
  \end{align*}
  
  Removing the condition $|w| \ge [\mb, \mc]$, we obtain
  \begin{align*}
    \int_{|w|\ge [\mb, \mc]}V_{1,1}(a,c,y,z,t, w; B) \,\df w = V_2(a,c,y,z, t; B) + O(R_2)\text{,}
  \end{align*}
  where, using the geometric mean of the conditions 
  \begin{equation*}
    \max\{|a|,|c|\}^{\frac{\n^2+2\n+1}{2}}|y|^{\frac{\n^2+2\n+1}{2}}|zt|^{\frac{\n^2+4\n+3}{2}}|w|^{\n+3} \le B\text{,}
  \end{equation*}
  (implied by $\cond(0,0,\dots) \le B$) with weight
  $\delta\coloneqq\frac{2}{\n+1}+\epsilon(\n+3)$ and of $|w| \le [\beta,\gamma]$
  with weight $1-\delta$,
  \begin{align*}
    R_2 &= \int_{|w|\le [\mb, \mc]}V_{1,1}(a,c,y,z,t, w; B) \,\df w \\
        &\ll \frac{[\mb,\mc]^{1-\delta}B^{1/(\n+1)+\epsilon}}
          {\max\{|a|,|c|\}\max\{|ay|,|cy|\}^{1+\epsilon(\n^2+2\n+1)/2}|zt|^{1+\epsilon(\n^2+4\n+3)/2}}
  \end{align*}
  for every sufficiently small $\epsilon > 0$. Summing $R_2$ over the remaining
  variables gives the error term
  \begin{equation*}
    \sum_{a,c,y,z,t}\sum_{\substack{\ma|z\\\mb\\\mc|y}} \frac{|\mu(\alpha)\mu(\beta)\mu(\gamma)|R_2}{[\ma, \mb][\mb, \mc]}
    \ll_\epsilon \sum_{\mb}\frac{B^{1/(\n+1)+\epsilon}}{\mb^{1+\delta}}\\
    \ll_\epsilon B^{\frac{1}{\n+1}+\epsilon} \text{.}
  \end{equation*}
  Hence $16 \cdot N_{U',H'}(B^{\frac{1}{\n+1}})$ is equal to
  \begin{align*}
    &
      \sum_{\substack{|a|,|c|\ge0\\|y|,|z|,|t| \ge 1\\
    \gcd(a,c)=\gcd(yz,t)=1\\
    }}
    \sum_{\substack{\ma,\mb,\mc>0\\ \ma | z \\ \mc | y}} \frac{\mu(\ma)\mu(\mb)\mu(\mc)}{[\ma, \mb][\mb, \mc]}
    V_{2}(a,c,y,z,t; B) + O_\epsilon(B^{\frac{1}{\n+1}+\epsilon})\\
    =&
       \sum_{\substack{|a|,|c|\ge0\\|y|,|z|,|t| \ge 1\\
    \gcd(a,c)=\gcd(yz,t)=1\\
    }}
    \sum_{\substack{\ma,\mb,\mc>0\\ \ma | z \\ \mc | y}} \frac{\mu(\ma)\mu(\mb)\mu(\mc)}{[\ma, \mb][\mb, \mc]}
    V_{2}(a,c,y,z,t; 1) B^{\frac{2}{\n+3}} + O_\epsilon(B^{\frac{1}{\n+1}+\epsilon})
    \text{,}
  \end{align*}
  where we have applied the transformation
  \begin{equation*}
    (b, w) \mapsto B^{\frac{1}{\n+3}}(b, w) \text{ or }
    (d, w) \mapsto B^{\frac{1}{\n+3}}(d, w)
  \end{equation*}
  inside the integral.
  
  Next, we apply the transformation,
  \begin{equation*}
    (at, ct, yz, z, t) \mapsto (\ca,\cc,\cy, z, t)
  \end{equation*}
  and then
  \begin{equation*}
    (b, w) \mapsto ((zt)^{\frac{2}{\n+3}}b, (zt)^{\frac{-\n-1}{\n+3}}w) \text{ or }
    (d, w) \mapsto ((zt)^{\frac{2}{\n+3}}d, (zt)^{\frac{-\n-1}{\n+3}}w)
  \end{equation*}
  inside the integral to obtain that $16 \cdot N_{U',H'}(B^{\frac{1}{\n+1}})$ is equal to 
  \begin{align*}
    &
      \sum_{\substack{|\ca|,|\cc| \ge 0\\|\cy|,|z|,|t| \ge 1\\
    t | \gcd(\ca, \cc)\\
    z | \cy\\
    \gcd(\ca/t,\cc/t)=1\\\gcd(\cy,t)=1\\
    }}
    \sum_{\substack{\ma,\mb,\mc>0\\ \ma | z \\ \mc | \cy/z}} \frac{\mu(\ma)\mu(\mb)\mu(\mc)}{[\ma, \mb][\mb, \mc]}
    V_{2}(\ca/t,\cc/t,\cy/z, z, t; 1) B^{\frac{2}{\n+3}} + O_\epsilon(B^{\frac{1}{\n+1}+\epsilon})\\
    =&
       \sum_{\substack{|\ca|,|\cc| \ge 0 \\ |\cy| \ge 1 \\ \gcd(\ca,\cc,\cy) = 1 \\ (\ca, \cc) \ne (0,0)
    }} \vartheta(\ca,\cc,\cy)
    B^{\frac{2}{\n+3}} + O_\epsilon(B^{\frac{1}{\n+1}+\epsilon})\text{,}
  \end{align*}
  where 
  \begin{align*}
    \vartheta(\ca,\cc,\cy)  &\coloneqq
    \sum_{\substack{|z|,|t| \ge 1\\\mathclap{|t| = \gcd(\ca,\cc)}\\z | \cy\\ \ma,\mb,\mc>0\\ \ma | z \\ \mc | \cy/z}} \frac{\mu(\ma)\mu(\mb)\mu(\mc)}
    {[\ma,\mb][\mb,\mc]}
    |z|^{\frac{-\n+1}{\n+3}}|t|^{\frac{4}{\n+3}} V_{2}(\ca,\cc,\cy,1,1; 1)\\
    &= 2|\gcd(\ca,\cc)|^{\frac{4}{\n+3}}\sum_{\substack{|z|\ge 1\\z | \cy\\ \ma,\mb,\mc>0\\ \ma | z \\ \mc | \cy/z}} \frac{\mu(\ma)\mu(\mb)\mu(\mc)}
    {[\ma,\mb][\mb,\mc]}
    |z|^{\frac{-\n+1}{\n+3}} V_{2}(\ca,\cc,\cy,1,1; 1)\\
    &= 4\omega_{\infty,(\ca:\cc:\cy)}\prod_{p\text{ prime}}\lambda_p\omega_{p,(\ca:\cc:\cy)}\text{.}
  \end{align*}
  In total,
  \begin{align*}
    N_{U',H'}(B^{\frac{1}{\n+1}}) &= \sum_{\substack{|\ca|,|\cc| \ge 0 \\ |\cy| \ge 1 \\ \gcd(\ca,\cc,\cy) = 1 \\ (\ca, \cc) \ne (0,0)}}
    \rleft(\frac{1}{4}\omega_{\infty,(\ca:\cc:\cy)}\prod_{p\text{ prime}}\lambda_p\omega_{p,(\ca:\cc:\cy)}\rright) B^{\frac{2}{\n+3}} + O_\epsilon(B^{\frac{1}{\n+1}+\epsilon})\\
    &=
    \sum_{\substack{x \in \Pd^2(\Qd)\setminus(\Vd(\ca,\cc)\cup\Vd(\cy))}}
    \rleft(\frac{1}{2}\omega_{\infty,x}\prod_{p\text{ prime}}\lambda_p\omega_{p,x}\rright) B^{\frac{2}{\n+3}} + O_\epsilon(B^{\frac{1}{\n+1}+\epsilon})\text{,}
  \end{align*}
  as predicted in Section~\ref{sec:exp-b-ge-2}.
\end{proof}

\begin{remark}
  We have omitted the details of the calculation of $\vartheta(\ca,\cc,\cy)$
  since, according to \cite[Corollaire~6.2.18]{MR1340296}, Manin's conjecture is
  true with Peyre's constant for all heights on $\Pd^1_\Qd \cong \fibphibx$ and
  hence it follows that $\vartheta(\ca,\cc,\cy)$ is equal to
  $2\cfr_{(\ca:\cc:\cy)}$.
\end{remark}

\begin{prop}
  \label{prop:cxb-finite}
  We have 
  \begin{align*}
    \sum_{\substack{x \in \Pd^2(\Qd)\setminus(\Vd(\ca,\cc)\cup\Vd(\cy))}} \cfr_x < \infty\text{.}
  \end{align*}
\end{prop}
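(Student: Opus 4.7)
The plan is to bound $\cfr_x=\tfrac{1}{2}\omega_{\infty,x}\prod_p\lambda_p\omega_{p,x}$ explicitly in terms of $|\ca|$, $|\cc|$, $|\cy|$, and $|\gcd(\ca,\cc)|$, and then to verify that the resulting bound is summable over the integer triples with $\gcd(\ca,\cc,\cy)=1$, $(\ca,\cc)\ne(0,0)$, and $\cy\ne 0$. By the $\ca\leftrightarrow\cc$ symmetry, I may assume $|\ca|\ge|\cc|$ and use the $\ca\ne 0$ form of Lemma~\ref{lemma:omega_infty_x_prime}.

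For the archimedean factor, I would use only two classes of monomials in $\Ms'_\n$: the three of the form $\{\ca,\cc,\cy\}^{\n^2+2\n+1}w^{\n+3}$ force $|w|\le M^{-(\n^2+2\n+1)/(\n+3)}$ with $M\coloneqq\max(|\ca|,|\cc|,|\cy|)$, and $b^{\n+3}\ca^2\cy^2$ forces $|b|\le(|\ca||\cy|)^{-2/(\n+3)}$. Bounding the integration volume by the product of these two intervals will yield
\begin{equation*}
  \omega_{\infty,x}\ll|\ca|^{-1-2/(\n+3)}\,|\cy|^{-2/(\n+3)}\,M^{-(\n^2+2\n+1)/(\n+3)}\text{.}
\end{equation*}

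For the nonarchimedean factor, I will exploit that $\gcd(\ca,\cc,\cy)=1$ implies, at each prime $p$, either $\nu_p(\cy)=0$ or $\min(\nu_p(\ca),\nu_p(\cc))=0$. Substituting into Lemma~\ref{lemma:pt_k_prime} one finds $\lambda_p\omega_{p,x}=(1-p^{-2})(p^{\efr+1})^{\nu_p(\gcd(\ca,\cc))}$ in the first case (recall $\efr+1=4/(\n+3)$), and a quantity bounded by an absolute constant (uniformly in $p$ and in $\nu_p(\cy)$) in the second case. Multiplying over all primes and using the standard estimate $\prod_{p\mid\cy}C\ll_\epsilon|\cy|^\epsilon$ valid for any fixed $C>0$, one obtains
\begin{equation*}
  \prod_p\lambda_p\omega_{p,x}\ll_\epsilon|\cy|^\epsilon\,|\gcd(\ca,\cc)|^{4/(\n+3)}
\end{equation*}
for every $\epsilon>0$.

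Finally, I would decompose $\ca=d\ca'$, $\cc=d\cc'$ with $\gcd(\ca',\cc')=1$ (so automatically $\gcd(d,\cy)=1$), and split into the subcases $M=|\ca|$ and $M=|\cy|$. After a geometric-series estimate in the variable that does not realize $M$, each subsum will reduce to a product of the shape
\begin{equation*}
  \sum_{d\ge 1}d^{-(\n+1)^2/(\n+3)+\epsilon}\sum_{a'\ge 1}(a')^{-(\n^2+\n+2)/(\n+3)+\epsilon}\text{,}
\end{equation*}
which converges because $(\n+1)^2>\n+3$ (equivalent to $(\n-1)(\n+2)>0$) and $\n^2+\n+2>\n+3$ for $\n\ge 2$. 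The main obstacle is controlling the extra growth factor $|\gcd(\ca,\cc)|^{4/(\n+3)}$ coming from the nontrivial $p$-adic densities, which has no analogue in Theorem~\ref{thm:height_constant} and is essentially the same phenomenon behind the failure of Conjecture~\ref{conj:BT} established in Theorem~\ref{thm:height_constant_bar}; the decomposition $\ca=d\ca'$, $\cc=d\cc'$ is precisely what allows this extra growth to be absorbed into the archimedean decay.
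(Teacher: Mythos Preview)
Your argument is correct. The overall strategy---bound $\cfr_x$ pointwise by something summable---matches the paper's, but the execution differs in several useful ways. For the non-archimedean part you appeal directly to the closed form in Lemma~\ref{lemma:pt_k_prime} and exploit $\gcd(\ca,\cc,\cy)=1$ to obtain $\prod_p\lambda_p\omega_{p,x}\ll_\epsilon|\cy|^\epsilon\,|\gcd(\ca,\cc)|^{4/(\n+3)}$; the paper instead goes back to the quantity $\vartheta(\ca,\cc,\cy)$ from the proof of Theorem~\ref{thm:bpc} and bounds the M\"obius/divisor sums crudely by $\gcd(\ca,\cc)\cdot 4^{\omega(\cy)}d(\cy)$. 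For the archimedean part you use only the monomials $\{\ca,\cc,\cy\}^{(\n+1)^2}w^{\n+3}$ and $b^{\n+3}\ca^2\cy^2$, whereas the paper derives two separate bounds for $V_2$ (one decaying in $\max\{|\ca|,|\cc|\}$, one in $|\cy|$) and interpolates between them via a weighted-maximum inequality to reach $V_2\ll\max\{|\ca|,|\cc|\}^{-2-\mu}|\cy|^{-1-\mu}$. Finally, you handle the $\gcd$ factor by the substitution $\ca=d\ca'$, $\cc=d\cc'$ and elementary series, while the paper uses Abel summation on $\sum_{\ca\le\cc}\gcd(\ca,\cc)/\cc^{2+\mu}$ and on $\sum_{\cy}4^{\omega(\cy)}d(\cy)/\cy^{1+\mu}$. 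Your route is more direct and self-contained; the paper's has the advantage of recycling $\vartheta$ from the main counting proof and of producing a clean decoupled bound in $\max\{|\ca|,|\cc|\}$ and $|\cy|$.
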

\begin{proof}
  In the case $\ca\ne 0$, the condition $\max|\Ms_\n'(\ca,b,\cc,(b\cc+\cy^\n w)/\ca,\cy,1,1,w)| \le 1$ implies
  \begin{equation*}\label{eqn:c}
    |b|^{\n+1}|\ca|^{2\n+2}|w|^2 \le 1 \text{ and } 
    |\ca|^{\n^2+2\n+1}|w|^{\n+3} \le 1\text{,}\tag{$*$}
  \end{equation*}
  hence we obtain
  \begin{align*}
    V_2(\ca,\cc,\cy,1,1; 1) \le \iint_{\eqref{eqn:c}}\frac{1}{|\ca|}\,\df b\, \df w
    \ll \int_{\eqref{eqn:c}} \frac{1}{|\ca|^3|w|^{2/(\n+1)}} \,\df w
    \ll \frac{1}{|\ca|^{3+(\n^2-1)/(\n+3)}}\text{.}
  \end{align*}
  Similarly, we obtain
  \begin{align*}
    V_2(\ca,\cc,\cy,1,1; 1) \le \frac{1}{|\cc|^{3+(\n^2-1)/(\n+3)}}\text{.}
  \end{align*}
  The condition $\max|\Ms_\n'(\ca,b,\cc,(b\cc+\cy^\n w)/\ca,\cy,1,1,w)| \le 1$
  also implies
  \begin{equation*}\label{eqn:cc}
    |b|^{\n+3}|\ca|^2|\cy|^2 \le 1 \text{ and } 
    |(b\cc+\cy^\n w)/\ca|^{\n+3}|\ca|^2|\cy|^2 \le 1\text{,}\tag{$**$}
  \end{equation*}
  hence we obtain
  \begin{align*}
    V_2(\ca,\cc,\cy,1,1; 1) &\le \iint_{\eqref{eqn:cc}}\frac{1}{|\ca|}\,\df b\, \df w
                              = \int_{\max\{|b|,|w|\}^{\n+3}|\ca\cy|^2 \le 1} \frac{1}{|\cy|^\n}\, \df b\, \df w\\
                            &\ll \frac{1}{|\ca|^{4/(\n+3)}|\cy|^{\n+4/(\n+3)}}\text{.}
  \end{align*}
  Similarly, we obtain
  \begin{align*}
    V_2(\ca,\cc,\cy,1,1; 1) \ll \frac{1}{|\cc|^{4/(\n+3)}|\cy|^{\n+4/(\n+3)}}\text{.}
  \end{align*}
  Together, we obtain
  \begin{align*}
    V_2(\ca,\cc,\cy,1,1; 1) \ll \frac{1}{\max\{|\ca|,|\cc|\}^{4/(\n+3)}\max\{|\ca|,|\cc|,|\cy|\}^{(\n^2+3\n+4)/(\n+3)}}\text{.}
  \end{align*}
  There exist $\lambda_1, \lambda_2 > 0$ with
  \begin{align*}
    \lambda_1 > \frac{2\n+2}{\n^2+3\n+4}\text{,}&& \lambda_2 > \frac{\n+3}{\n^2+3\n+4}\text{,}&& \lambda_1+\lambda_2=1\text{,}
  \end{align*}
  so that using $\max\{|\ca|,|\cc|,|\cy|\} \ge \max\{|\ca|,|\cc|\}^{\lambda_1}|\cy|^{\lambda_2}$, we obtain that there exists $\mu > 0$
  such that
  \begin{align*}
    V_2(\ca,\cc,\cy,1,1; 1) \ll \frac{1}{\max\{|\ca|,|\cc|\}^{2+\mu}|\cy|^{1+\mu}}\text{.}
  \end{align*}
  With $\vartheta(\ca,\cc,\cy)$ from the proof of Theorem~\ref{thm:bpc}, we have
  \begin{align*}
    \sum_{\substack{\ca,\cc\ge 0\\ \cy \ge 1\\ (\ca,\cc)\ne(0,0)}} \vartheta(\ca,\cc,\cy) &\ll \sum_{\ca,\cc} \frac{\gcd(\ca,\cc)}{\max\{\ca,\cc\}^{2+\mu}}
    \sum_{\cy} \frac{4^{\omega(\cy)}d(\cy)}{\cy^{1+\mu}}\text{.}
  \end{align*}
  Our aim is to show that this sum converges. We have
  \begin{align*}
    \sum_{\ca \le \cc \le M} \frac{\gcd(\ca,\cc)}{\max\{\ca,\cc\}^{2+\mu}}
    &= \sum_{\cc \le M} \frac{1}{\cc^{2+\mu}} \sum_{\ca\le\cc} \gcd(\ca, \cc)
      =  \sum_{\cc \le M} \frac{1}{\cc^{1+\mu}} \sum_{d|\cc} \frac{\phi(d)}{d}\\
    &\ll \sum_{\cc \le M} \frac{1}{\cc^{1+\mu}} \sum_{d|\cc} 1
      = \sum_{\cc \le M} \frac{d(\cc)}{\cc^{1+\mu}}\\ 
    &\ll \sum_{\cc\le M}\frac{d(\cc)}{M^{1+\mu}}\ + \int_1^M \sum_{\cc \le \lambda}\frac{d(\cc)}{\lambda^{2+\mu}}\,\df \lambda\\
    &\ll \frac{\log M}{M^{\mu}} + \int_1^M \frac{\log \lambda}{\lambda^{1+\mu}} \,\df \lambda
      \ll 1\text{.}
  \end{align*}
  Note that we have
  \begin{align*}
    \sum_{\cy \le M}4^{\omega(\cy)}d(\cy) = \sum_{\cy \le M}\sum_{z|\cy}4^{\omega(\cy)} &= \sum_{\substack{y,z\ge 1\\yz \le M}} 4^{\omega(yz)} \ll \sum_{y \le M} 4^{\omega(y)} 
    \sum_{z \le M/y} 4^{\omega(z)} \\&\ll \sum_{y \le M} \frac{4^{\omega(y)}M(\log M)^3}{y} \ll M (\log M)^7\text{.} 
  \end{align*}
  It follows that we have
  \begin{align*}
    \sum_{\cy\le M} \frac{4^{\omega(\cy)}d(\cy)}{\cy^{1+\mu}}
    &\ll \sum_{\cy\le M} \frac{4^{\omega(\cy)}d(\cy)}{M^{1+\mu}} + \int_1^M \sum_{\cy\le \lambda}\frac{4^{\omega(\cy)}d(\cy)}{\lambda^{2+\mu}} \,\df \lambda\\
    &\ll \frac{(\log M)^7}{M^\mu} + \int_1^M \frac{(\log M)^7}{M^{1+\mu}} \, \df \lambda \ll 1\text{.}\qedhere
  \end{align*}
\end{proof}

\bibliographystyle{amsalpha}
\bibliography{manin_spherical}

\end{document}